\newtheorem{theorem}{Theorem}[section]
\newtheorem{lemma}[theorem]{Lemma}
\newtheorem{cor}[theorem]{Corollary}
\theoremstyle{definition}
\newtheorem{definition}[theorem]{Definition}
\newtheorem{example}[theorem]{Example}
\theoremstyle{remark}
\theoremstyle{assumption}
\numberwithin{equation}{section}
\newcommand{\uc}{\mathbb{S}}
\newcommand{\sha}{\succ\mkern-14mu_s\;}
\begin{document}
	
	\date{\today}
	
	\title[A new invariant for a cycle of an interval map]{A new invariant for a cycle of an interval map}

	\author{Sourav Bhattacharya}

	\address[Dr. Sourav Bhattacharya]
	{Department of Mathematics, Visvesvaraya National Institute Of Technology Nagpur, 
 Nagpur, Maharashtra 440010, India}
	
	\email{souravbhattacharya@mth.vnit.ac.in}
	
	\subjclass[2010]{Primary 37E05, 37E10, 37E15; Secondary 37E45}
	
	\keywords{Sharkovsky Theorem, rotation numbers, ergodicity, unfolding numbers }

	\begin{abstract}
		We \emph{propose} a new \emph{invariant} for a \emph{cycle} of an \emph{interval map} $f:[0,1] \to [0,1]$,  called its \emph{unfolding number}. 
	\end{abstract}
	
	\maketitle

\section{Introduction}\label{introduction}
In his 1964 legendary paper \cite{S} (see \cite{shatr} for English Translation), O.M. Sharkovsky unveiled an elegant \emph{rule} governing the \emph{co-existence} of \emph{periods} of \emph{periodic orbits} (also known as \emph{cycles}) of a \emph{continuous interval map}. He introduced the following \emph{order} among all \emph{natural numbers}: $$3\sha 5\sha 7\sha\dots\sha 2\cdot3\sha 2\cdot5\sha 2\cdot7 \sha \dots $$
$$
\sha\dots 2^2\cdot3\sha 2^2\cdot5\sha 2^2\cdot7\sha\dots\sha 8\sha
4\sha 2\sha 1$$ and proved the following theorem: 

\begin{theorem}[\cite{S,shatr}]\label{shar}
	If $m, n \in \mathbb{N}$ with $m \sha n$,  then if a  continuous mapping $f: [0,1] \to [0,1]$ has a cycle of period $m$, then $f$ must also have a cycle of period $n$.
\end{theorem}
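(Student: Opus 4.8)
The plan is to reduce the entire statement to a combinatorial analysis of a single cycle, supported by two elementary consequences of the intermediate value theorem, and then to extract from that cycle loops in a transition graph whose lengths realize every period $n$ with $m \sha n$. Say a closed interval $I$ \emph{covers} a closed interval $J$, written $I \RA J$, if $f(I) \supseteq J$. First I would record the \emph{Fixed Point Lemma} (if $f([a,b]) \supseteq [a,b]$ then $f$ has a fixed point in $[a,b]$) and the \emph{Covering Lemma} (given a chain $I_0 \RA I_1 \RA \cdots \RA I_{n-1} \RA I_0$ there is a point $x \in I_0$ with $f^k(x) \in I_k$ for each $k$ and $f^n(x)=x$), the latter proved by pulling the loop back interval by interval and applying the Fixed Point Lemma on the resulting nested preimages.

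Next I would fix a cycle $P = \{p_1 < p_2 < \cdots < p_m\}$ of period $m$, let $I_j = [p_j, p_{j+1}]$ be the $m-1$ basic intervals, and form the directed graph $G$ on these vertices with an edge $I_j \RA I_k$ whenever $f(I_j) \supseteq I_k$. By the Covering Lemma every loop in $G$ of length $n$ yields a point of period dividing $n$; I would upgrade this to period exactly $n$ by restricting to loops that are not proper repetitions and that traverse a distinguished vertex exactly once, using that the only way the produced point can acquire a smaller period is by landing inside $P$, which the endpoint structure of the basic intervals rules out.

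The heart of the argument is the case when $m$ is odd with $m>1$. Analyzing the cyclic permutation that $f$ induces on the ordered orbit $P$, I would show that some basic interval $K$ satisfies $K \RA K$ (so $f$ has a fixed point) and that $G$ contains a distinguished \v{S}tefan subgraph: a self-loop at $K$ together with a cycle $K \RA J_2 \RA \cdots \RA J_{m-1} \RA K$ through the remaining intervals. Padding with the self-loop at $K$ then produces loops of every sufficiently long length, while suitable short-cuts across the cycle produce loops of every small even length; together with the fixed point these realize, for each $n$ with $m \sha n$, a loop giving a cycle of period exactly $n$. Establishing the existence and precise shape of this subgraph for an arbitrary odd orbit is the step I expect to be the main obstacle: it rests on a delicate parity-and-monotonicity argument about how the orbit points are interleaved under $f$, and one must simultaneously certify that the relevant loops are not proper repetitions so that the period obtained is exactly $n$ rather than a proper divisor.

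Finally I would dispatch the remaining (even) part of the ordering by renormalization. Writing $m = 2^s q$ with $q$ odd, I would relate cycles of $f$ to cycles of $g = f^2$ on an invariant pair of subintervals: a period-$2n$ orbit of $f$ descends to a period-$n$ orbit of $g$, and conversely the interval structure lets a period-$n$ orbit of $g$ be doubled back to the relevant $f$-periods. Iterating this reduces the claim for $2^s q$ to the already-settled odd case for $q$ under $f^{2^s}$, and the pure powers of two at the tail $\cdots \sha 8 \sha 4 \sha 2 \sha 1$ follow from the special fact that a period-$2k$ orbit forces a period-$k$ orbit, obtained by applying the Fixed Point Lemma to a suitable iterate. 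Assembling the odd case, the doubling reduction, and the power-of-two tail covers every instance of $m \sha n$ and completes the proof.
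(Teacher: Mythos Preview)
The paper does not supply its own proof of this theorem: it is stated with the citation \cite{S,shatr} and used as background, so there is no in-paper argument to compare your proposal against. Your outline is the standard modern proof (covering relations on $P$-basic intervals, the \v{S}tefan cycle for odd $m$, and a doubling/renormalization reduction for the even tail), and as a strategy it is correct; the delicate points you flag---extracting the \v{S}tefan subgraph from an arbitrary odd orbit and certifying that the Covering Lemma yields period exactly $n$---are indeed where the work lies, but nothing further can be said by way of comparison with the present paper.
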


 Theorem \ref{shar} reveals an intricate and \emph{concealed combinatorial} framework that \emph{governs} the  \emph{disposition} of \emph{cycles} of a \emph{continuous interval} map and led to the inception of a new direction of research in the theory of \emph{dynamical systems} called \emph{combinatorial dynamics}. 

Future research stemming from Theorem \ref{shar} could branch out in numerous directions. For instance, one might \emph{replace} the \emph{interval} with a more \emph{general space} or \emph{replace} \emph{periodic orbits} with more \emph{general orbits} (e.g., \emph{homoclinic trajectories}). Additionally, one could strive to discover a more ``\emph{refined}" \emph{rule} for the \emph{coexistence} of \emph{periodic orbits} in a given \emph{continuous interval map} than that provided by Theorem \ref{shar}. For an extensive list of references and a comprehensive survey of the \emph{evolution} of Theorem \ref{shar}, its historical context, subsequent advancements, and implications in the \emph{theory of dynamical systems}, one could refer to the books \cite{alm00} and \cite{BOM} and the paper \cite{BME}.

In this paper, we pursue the latter mentioned approach for \emph{refining} Theorem \ref{shar}. Let us discuss in details. Observe that Theorem \ref{shar} elucidates how certain ``\emph{type}"  of \emph{dynamical behaviour}  (here \emph{period}) once exhibited by a map, \emph{forces} some other ``\emph{type}''  (\emph{period}) and thus, the question of ``\emph{co-existence}"  among ``\emph{types}'' becomes a question of ``\emph{forcing}" among ``\emph{types}''.  However, ``\emph{periods}" offer only a coarse depiction of a \emph{cycle}, as numerous \emph{cycles} share the same \emph{period}. A more nuanced approach to characterizing \emph{cycles}, involves examining their \emph{cyclic permutations}: \emph{cyclic
	permutation} we get when we look at how the map acts on the points
of the \emph{cycle}, ordered from the \emph{left to the right}; we \emph{identify}  \emph{cycles} having the \emph{same} underlying \emph{cyclic permutations} and call the corresponding \emph{equivalence classes}: \emph{patterns}.

   As it turns out (see \cite{Ba}), the \emph{rule} for \emph{co-existence} using this ``\emph{type}" is rather too \emph{detailed} and doesn't allow for a transparent description.  This
motivates one to look for a, middle-of-the-road way of describing
cycles: \emph{rotation theory}, a way not as \emph{crude} as \emph{periods} but not as fine as \emph{permutations},
which would still allow for a \emph{transparent description}.

 	The concept of \emph{rotation numbers} was  historically first introduced by Poincar\'e
 	for \emph{circle homeomorphisms} (See  \cite{poi}).  It was extended to \emph{circle maps} of
 	\emph{degree one} by Newhouse, Palis and Takens (see \cite{npt83}), and then
 	studied, e.g., in \cite{bgmy80, ito81, cgt84, mis82, mis89, almm88}. In a general setting,  \emph{rotation numbers} can be defined as follows: Let $X$ be a \emph{compact metric space} with a Borel $\sigma$-algebra, $\phi:X\to\mathbb R$ be a \emph{bounded measurable function} (often called an \emph{observable}) and 
 $f:X\to X$ be a \emph{continuous} map. Then for any $x \in X$ the set
 $I_{f,\phi}(x)$, of all \emph{sub-sequential limits} of the sequence
 $ \left \{ {\frac1n} \sum^{n-1}_{i=0}\phi(f^i(x)) \right \}$ is called the {\it
 	$\phi$-rotation set} of $x$.  If $I_{f,\phi}(x)=\{\rho_\phi(x)\}$ is a singleton, then the
 number $\rho_\phi(x)$ is called the {\it $\phi$-rotation number} of
 $x$. It is easy to see that the $\phi$-\emph{rotation set}, $I_{f,\phi}(x)$ is a closed
interval for all $x \in X$. The \emph{union} of all $\phi$-\emph{rotation sets} of all points of $X$ is called
the \emph{$\phi$-rotation set} of the map $f$ and is denoted by
$I_f(\phi)$. If $x$ is an $f$-\emph{periodic point} of period $n$,  then its {\it $\phi$-rotation number} 
$\rho_\phi(x)$ is well-defined, and a related concept of the
\emph{$\phi$-rotation pair} of $x$ can be introduced: the pair
$({\frac1n}\sum^{n-1}_{i=0}\phi(f^i(x)), n)$ is the
\emph{$\phi$-rotation pair} of $x$. 

	 The use of \emph{rotation theory} in the study of \emph{dynamical systems} is useful when our \emph{rotation theory} is \emph{complete}. This means that: 
	 
	 \begin{enumerate}
	 	\item  \emph{rotation set} is \emph{convex};
	 	\item  \emph{rotation number} of a \emph{cycle} of \emph{period} $n$ is of the form $\frac{m}{n}$ where $m,n \in \mathbb{N}$;
	 	\item  if $\frac{m}{n}$ belongs to the \emph{interior} of the \emph{rotation set}, then there exists a \emph{cycle} of \emph{period} $kn$ and \emph{rotation pair} $(km,kn)$ for every $k \in \mathbb{N}$.  
	 	
	 \end{enumerate}

	Another way of looking at \emph{rotation theory} is through \emph{ergodic invariant measures} (\cite{dgs76}, \cite{alm00}).   If $P$ is a \emph{cycle} of $f: [0,1] \to [0,1]$, we can look at the \emph{ergodic measure} $\mu_P$, \emph{equidistributed} on $P$, that is, such that $\mu_P(\{ x\}) = \frac{1}{n}$ for all $ x\in P$, where $n$ is the \emph{period} of $P$.  Let us call $\mu_P$, a \emph{CO-measure}
\cite{dgs76} (comes from ``\textbf{c}losed \textbf{o}rbit''). Recall that for \emph{Borel
measures} on \emph{compact spaces} one normally considers their \emph{weak}
topology defined by the \emph{continuous functions} \cite{dgs76}. The following result follows: 

\begin{theorem}[\cite{ blo95a}]\label{t:codense} Suppose that
	$f: [0,1] \to [0,1] $ is a continuous interval map or a circle map with non-empty
	set of periodic points. Then any invariant probability measure $\mu$
	for whom there exists a point $x$ with $\mu(\omega_f(x))=1$,  where $\omega_f(x)$ denotes omega limit set of $x$ under $f$ can be
	approximated by CO-measures arbitrary well. In particular, CO-measures
	are dense in the set of all ergodic invariant measures of $f$.
\end{theorem}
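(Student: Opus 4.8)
The plan is to work in the weak topology on the space of Borel probability measures on $[0,1]$, which is compact and metrizable; fixing a countable dense family $\{\phi_j\}_{j\ge 1}$ in the unit ball of $C([0,1])$, it suffices to show that for every $\varepsilon>0$ and every finite collection of observables $\phi_1,\dots,\phi_k$ there is a cycle $P$ with $\abs{\int \phi_j\,d\mu_P-\int\phi_j\,d\mu}<\varepsilon$ for $1\le j\le k$. I would first dispose of the ``in particular'' clause: if $\mu$ is ergodic, then for $\mu$-almost every $y$ the orbit of $y$ is dense in $\operatorname{supp}\mu$, whence $\operatorname{supp}\mu\subseteq\omega_f(y)$ and $\mu(\omega_f(y))=1$. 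Thus every ergodic $\mu$ satisfies the hypothesis of the main assertion, and the density statement follows once the main statement is proved.

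For the main statement I would reduce, via the ergodic decomposition, to the ergodic case and recover a general (non-ergodic) $\mu$ at the end by concatenation. So assume $\mu$ is ergodic with $\operatorname{supp}\mu\subseteq\omega_f(x)$. By Birkhoff's ergodic theorem there is a generic point $y$ for which the empirical measures $E_n=\frac1n\sum_{i=0}^{n-1}\delta_{f^i(y)}$ converge weakly to $\mu$; hence for all large $n$ the orbit segment $y,f(y),\dots,f^{n-1}(y)$ has empirical distribution within $\varepsilon/2$ of $\mu$ against $\phi_1,\dots,\phi_k$. The task is then to manufacture a genuine periodic orbit $P$ whose CO-measure $\mu_P$ lies within $\varepsilon/2$ of $E_n$; that is, to ``close up'' a long orbit segment into a cycle carrying essentially the same time statistics.

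This closing step is the crux, and it is where one-dimensional topology enters. I would invoke the structure of $\omega_f(x)$ for interval (and circle) maps with periodic points: either it is itself a cycle, in which case $\mu=\mu_P$ is already a CO-measure; or it is a solenoidal set trapped by a nested sequence of cycles of periodic intervals whose periods tend to infinity, in which case the periodic orbits sitting inside those cycles of intervals equidistribute and their CO-measures approximate $\mu$; or it is a basic set on which $f$ acts transitively on a cycle of intervals, where a specification/shadowing property lets one locate, near the given orbit segment, a periodic point whose orbit shadows it and so carries nearly the same empirical measure. In each case the closing is effected by the intermediate value theorem applied to a suitable iterate of $f$ on an appropriate subinterval, producing the required cycle $P$.

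The main obstacle is precisely this last step: guaranteeing the existence of the closing orbit while controlling its empirical distribution, since the mechanism differs across the three types of $\omega$-limit set and the basic-set case demands shadowing/specification estimates uniform enough that the error introduced by closing up is $o(1)$ in $n$. Once an ergodic $\mu$ is approximated, a general $\mu$ with $\mu(\omega_f(x))=1$ is handled by splitting it into finitely many ergodic pieces to within $\varepsilon$, approximating each by a cycle, and concatenating the corresponding shadowing segments into a single long periodic orbit that allocates time to each piece in the correct proportion; this produces one CO-measure close to $\mu$ and completes the argument.
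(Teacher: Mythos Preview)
The paper does not prove Theorem~\ref{t:codense}; it is quoted in the Introduction as a known result from \cite{blo95a} (Blokh's spectral decomposition paper) and is used only as background. There is therefore no in-paper proof to compare your proposal against.

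That said, your sketch is broadly in the spirit of the cited reference: the trichotomy you invoke for $\omega_f(x)$ (periodic, solenoidal, basic set) is exactly Blokh's spectral decomposition, and the closing/specification mechanism on basic sets is the engine that drives the approximation there. Two comments on the proposal itself. First, the reduction ``ergodic decomposition, then concatenate cycles'' is not needed and is the shakiest step: the hypothesis already pins $\mu$ to a single $\omega$-limit set, and on a solenoidal set the invariant measure is unique (hence ergodic), while on a basic set specification lets you approximate $\mu$ directly without first passing to ergodic components. The concatenation of several periodic orbits into one with prescribed time allocation is not generally available for interval maps outside the specification regime, so it is better to avoid it. Second, in the solenoidal case you should say a word about why CO-measures on the nested periodic-interval cycles converge to the unique invariant measure; this uses that the diameters of the periodic intervals go to zero along a subsequence, not merely that the periods tend to infinity.
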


If the \emph{dynamics} of $f$ and the  \emph{observable} $\phi$ are related we can further deduce additional informations about $\phi$-\emph{rotation sets}, for instance, in the case of \emph{circle degree one} case (see \cite{mis82}, \cite{alm00}).  Consider the circle $\uc = \mathbb{R} / \mathbb{Z}$ with the \emph{natural projection} $\pi: \mathbb{R} \to \uc$. If $f:\uc\to \uc$ is \emph{continuous}, then
	there is a \emph{continuous map} $F_f:\mathbb{R}\to \mathbb{R}$ such that $F_f
	\circ \pi=\pi\circ f$. Such a map $F_f$ is called a \emph{lifting} of $f$. It
	is unique up to translation by an integer. An integer $d$ with $F_f(X+1)=F_f(X)+d$
	for all $X\in \mathbb{R}$ is called the \emph{degree} of the map $f$ and is
	independent of the choice of $F$. \emph{Conversely}, maps $G:\mathbb R\to \mathbb R$ such that $G(X+1)=G(X)+d$ for every real $X$ are said to be \emph{maps of the real line of degree $d$}, can be defined independently, and are \emph{semi-conjugate} by the same map $\pi: \mathbb{R} \to \uc$ to the \emph{circle maps of degree} $d$. In this paper we consider both \emph{maps of the circle} and \emph{maps of the real line} \emph{of degree one}. Denote by $\mathcal{L}_1$ the set of all \emph{liftings} of \emph{continuous
	degree one} self-mappings of $\uc$ endowed with the \emph{sup norm}.

	Given, a \emph{continuous map},   $f : \uc \to \uc$,   consider its \emph{lifting} $F_f \in \mathcal{L}_1$. Define an \emph{observable} $\phi_f: \uc \to \mathbb R$ so that
$\phi_f(x)=F_f(X)-X$ for any $x \in \uc$ and  $X\in \pi^{-1}(x) \in \mathbb{R}$; then $\phi_f$ is
well-defined, the \emph{classical rotation set} of a point $x \in \uc $ is simply the $\phi_f$ -\emph{rotation set} $I_{f,\phi_f}(x)=I_f(x)$. It is easy to see that $I_f(x) = \left \{ \overline{\rho_{F_f}}(X),\underline{\rho_{F_f}}(X) \right  \}$ where $\overline{\rho_{F_f}}(X) = \displaystyle  \lim \sup_{ n \to \infty} \frac{F_f^{n}(X) - X}{n}$ and $\underline{\rho_{F_f}}(X) = \displaystyle  \lim \inf_{ n \to \infty} \frac{F_f^{n}(X) - X}{n}$.  The \emph{numbers}: $\overline{\rho_{F_f}}(X)$ and $ \underline{\rho_{F_f}}(X) $ are respectively called \emph{upper} and \emph{lower} \emph{rotation numbers} of $X \in \mathbb{R}$ under $F_f: \mathbb{R} \to \mathbb{R}$ (equivalently of $x \in \uc$ under $f: \uc \to \uc$). If $\overline{\rho_{F_f}}(X) = \underline{\rho_{F_f}}(X) = \rho_{F_f}(X)$, we write $\rho_{F_f}(X)$, the \emph{rotation number} of $X	$ under $F_f$ (equivalently of $x$ under $f: \uc \to \uc$).

Suppose $f^{n}(y) = y$, $y \in \uc$, that is, $y$ is a \emph{periodic point} of $f: \uc \to \uc$ of \emph{period} $n$. Then, for any $Y \in \pi^{-1}(y)$, $F_f^{n}(Y) = Y + m$ for some $ m \in \mathbb{N}$ and hence $\overline{\rho_{F_f}}(Y) = \underline{\rho_{F_f}}(Y) =  \frac{m}{n}$. We call $  \frac{m}{n}$, the \emph{rotation number} of $Y \in \mathbb{R}$ under $F_f: \mathbb{R} \to \mathbb{R}$ (equivalently of $y \in \uc$ under $f : \uc \to \uc$)  and $(m,n)$ the \emph{rotation pair} of $Y \in \mathbb{R}$ under $F_f: \mathbb{R} \to \mathbb{R}$  (equivalently of $y \in \uc$ under $f : \uc \to \uc$).

 	\emph{Rotation pairs} can be represented in an interesting way using the notations used in  \cite{BMR}. We  represent the \emph{rotation pair} $rp(x) = (mp, mq)$ with $p,q$ coprime and $m$ being some natural number as a pair $ (t, m)$ where $ t = \frac{p}{q}$. We call the later pair a \emph{modified-rotation pair}(\emph{mrp}) and write $mrp(x) = (t, m)$. We then think of the real line with a \emph{prong} attached at each rational point and the set $\mathbb{N}
	\cup \{2^\infty\} $ marked on this prong in the \emph{Sharkovsky ordering} $ \sha$ with $1$ \emph{closest} to the \emph{real line} and $3$ \emph{farthest} from it. All points of the \emph{real line} are marked $0$; at irrational points we can think of degenerate prongs with only $0$ on them. The union of all \emph{prongs} and the \emph{real line} is denoted by $\mathbb{M}$.  Thus, a \emph{modified rotation pair} $(t,m)$ corresponds to the specific element of $ \mathbb{M}$, namely to the number $m$ on the \emph{prong} attached at $t$.  However, no \emph{rotation pair} corresponds to $(t,2^{\infty})$ or to $(t,0)$. Then, for  $(t_1, m_1) $ and  $  (t_2,m_2)$, in $\mathbb{M}$, the \emph{convex hull}	 $[(t_1, m_1), $ $  (t_2,m_2)]$  consists of all \emph{modified rotation pairs} $(t,m)$ with $t$ strictly between $t_1$ and $t_2$ or $t=t_i$ and $m \in Sh(m_i)$ for $ i=1,2$ (See Figure \ref{convex_hull}).

	Let $F \in \mathcal{L}_1$. Let $mrp(F)$ be the set of \emph{modified rotation pairs} of all \emph{lifted cycles} of $F$.  Clearly, $mrp(F) \subset \mathbb{M}$. The following theorem holds:
	
	\begin{theorem}[\cite{mis82}]\label{circle:maps}
		Let $F \in \mathcal{L}_1$. Then there are elements $(t_1,m_1)$ and $(t_2,m_2)$ of $\mathbb{M}$ such that $mrp(F) = [(t_1,m_1), (t_2,m_2)]$ and if $t_i$ is rational, then $m_i \neq 0$ for $i=1,2$. Moreover, for any set of the above form there exists $F \in \mathcal{L}_1$ with $mrp(F)$ equal to this set.
	\end{theorem}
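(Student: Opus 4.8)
The plan is to establish the two assertions in turn, relying throughout on the classical description of the rotation set: for $F\in\mathcal{L}_1$ the set $\{\rho_F(X):X\in\R\}$ of rotation numbers is a closed interval $[\rho_1,\rho_2]$, where $\rho_2=\lim_n M_n/n$ and $\rho_1=\lim_n m_n/n$ with $M_n=\max_X(F^n(X)-X)$ and $m_n=\min_X(F^n(X)-X)$. The limits exist because $M_n$ is subadditive and $m_n$ superadditive (using $F^{j+n}(X)-F^n(X)=F^j(F^n(X))-F^n(X)$ together with degree one), and one checks that every rational in $[\rho_1,\rho_2]$ is the rotation number of some cycle while an irrational endpoint is realized only as a limit. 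Setting $t_1=\rho_1$ and $t_2=\rho_2$, every cycle of $F$ has rational rotation number in $[t_1,t_2]$, so $mrp(F)$ is supported on the prongs over the rationals of this interval, and the whole problem reduces to determining, prong by prong, which numbers $m$ occur.

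For an interior rational $t=p/q$ in lowest terms, $t_1<t<t_2$, I would show that every $m\in\mathbb{N}$ occurs. Consider $H=F^q-p$, which again lies in $\mathcal{L}_1$ and whose fixed points are precisely the lifts of $F$-periodic points of rotation number $p/q$. Subadditivity gives $M_q\geq q\rho_2>p$ and $m_q\leq q\rho_1<p$, so there exist points with $H(X)>X$ and with $H(Y)<Y$. This over- and undershooting, together with two fixed points of $H$ flanking the relevant region, produces a horseshoe, i.e.\ two disjoint closed intervals $I,J$ with $H(I)\supseteq I\cup J$ and $H(J)\supseteq I\cup J$. A horseshoe contains a period-three pattern, so Theorem \ref{shar} yields periodic orbits of $H$ of every period $k$; since these sit at $H$-rotation number $0$ they lift to cycles of $F$ of rotation number $p/q$ and period $kq$, giving the modified rotation pair $(t,k)$ for every $k$. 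This fills the entire prong at each interior rational, matching the ``$t$ strictly between'' clause of the convex hull.

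The delicate step is the two endpoints. For $i=1,2$ I would let $m_i$ be the least, in the Sharkovsky order $\sha$, natural number for which $F$ has a cycle with modified rotation pair $(t_i,m_i)$, with the convention $m_i=0$ when $t_i$ is irrational (a degenerate prong carrying no cycle); this is exactly where the clause ``$m_i\neq 0$ if $t_i$ is rational'' enters, since an extremal rational rotation number is always attained by an actual periodic orbit. It then remains to prove that the set of $m$ appearing at $t_i$ is precisely the Sharkovsky tail $Sh(m_i)$. The inclusion of all $\sha$-larger periods follows by analysing the cyclic permutation of a cycle that realizes the extremal rotation number: extremality forces a rigid order structure on such a cycle, and feeding this pattern into the forcing relation behind Theorem \ref{shar} produces cycles of all $\sha$-successors of $m_i$ with the same rotation number $t_i$; the reverse inclusion is immediate from the minimality built into the definition of $m_i$. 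I expect this endpoint analysis to be the main obstacle, because it is precisely here that the coarse interval picture must be upgraded to the full combinatorial content of the Sharkovsky ordering, and it requires controlling the patterns of extremal cycles rather than merely their periods.

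For the converse I would construct, for any admissible data $(t_1,m_1),(t_2,m_2)$ with $m_i\neq 0$ whenever $t_i$ is rational, an explicit piecewise-linear $F\in\mathcal{L}_1$ realizing $mrp(F)=[(t_1,m_1),(t_2,m_2)]$. The idea is to assemble $F$ from monotone affine building blocks so that $\rho_1=t_1$ and $\rho_2=t_2$ exactly, and so that near each rational endpoint the map carries a cycle of the prescribed Sharkovsky-minimal type $m_i$; the interior prongs then fill in automatically by the first part, and the endpoint tails $Sh(m_i)$ follow from the analysis above applied to the constructed map. The one point needing care is to verify that the extremal blocks introduce no spurious cycle of a period $\sha$-below $m_i$ at the endpoints, which I would arrange by choosing the slopes near the extremal orbits tightly enough that the only recurrence at rotation number $t_i$ is the intended one.
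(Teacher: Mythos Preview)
The paper does not contain a proof of this theorem: it is stated with the citation \cite{mis82} and used as a background result from the literature (Misiurewicz's 1982 paper on periodic points of degree one circle maps). There is therefore no ``paper's own proof'' against which to compare your proposal.

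That said, your sketch follows the standard route to this result as developed in \cite{mis82} and exposited in \cite{alm00}: identify the rotation set as the interval $[\rho(F_\ell),\rho(F_u)]$, obtain all periods at interior rationals via a horseshoe for $F^q-p$, and handle the endpoints by analysing the combinatorics of extremal cycles. Two places where your outline is thinner than the actual argument are worth flagging. First, at the endpoints the claim that extremal cycles have a ``rigid order structure'' forcing exactly a Sharkovsky tail is the heart of the matter and is not automatic; in the literature this goes through the water-pouring maps $F_\ell,F_u$ (Definition~\ref{lower:upper:bound:map}) and the family $F_\mu$ of Theorem~\ref{degree:one:result:3}, which reduce the endpoint behaviour to that of a monotone degree-one map and then to interval dynamics where Sharkovsky applies. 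Second, your realization step (the ``Moreover'' clause) is only gestured at; the actual constructions are somewhat delicate, particularly when an endpoint is irrational or when one must realize $m_i=2^\infty$ on a prong, and verifying that no unwanted periods appear at the endpoints requires more than choosing slopes ``tightly enough.''
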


	\begin{figure}[H]
		\caption{\emph{Pictorial representation} of the \emph{convex hull} (\emph{shown in blue}) $[(t_1,m_1), (t_2,m_2)]$ of the \emph{modified rotation pairs} $(t_1,m_1)$ and $(t_2,m_2)$ where $t_1,t_2 \in \mathbb{Q}$ and $m_1,m_2 \in \mathbb{N}$}
		\centering
		\includegraphics[width=0.5\textwidth]{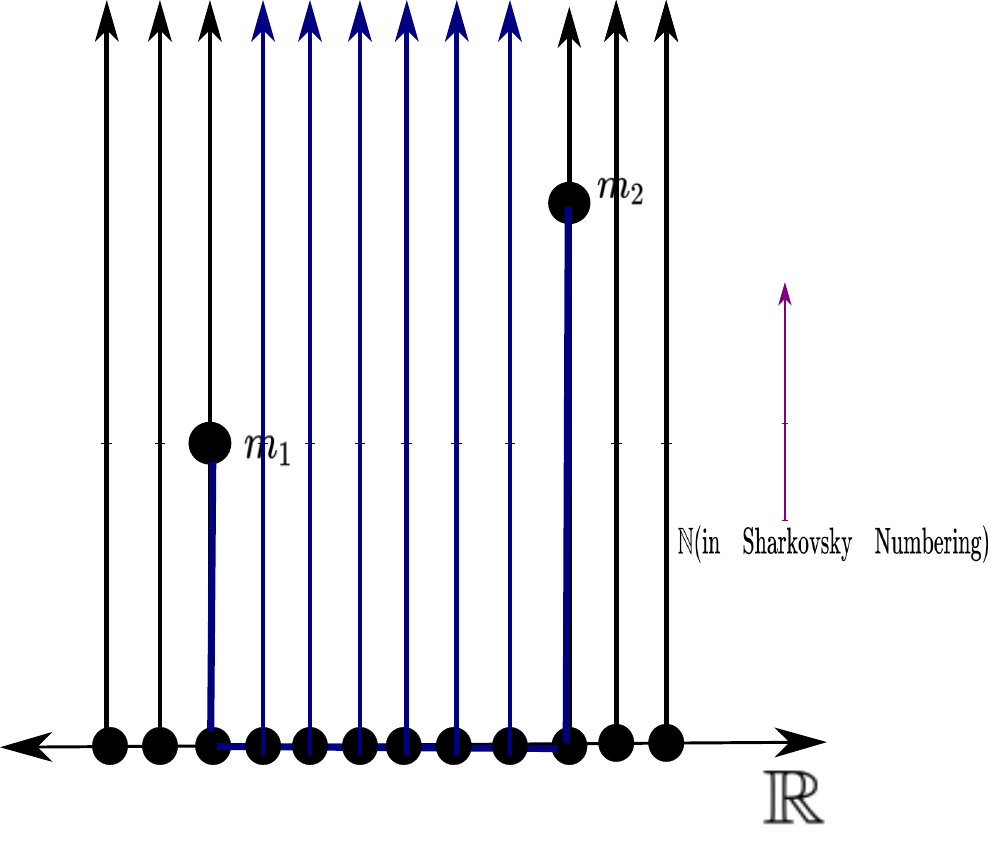}
		\label{convex_hull}
	\end{figure}
	
	Results similar to Theorem \ref{circle:maps} were also proven for maps of intervals in \cite{BM1} when a version of \emph{rotation numbers} for \emph{interval maps} called  \emph{over-rotation numbers} was introduced using a special \emph{observable} $\chi$. Let $f:[0,1]\to [0,1]$ be a \emph{continuous interval map}, $Per(f)$ be its set
	of \emph{periodic points}, and $Fix(f)$ be its set of \emph{fixed points}. It is easy
	to see that if $Per(f)=Fix(f)$,  then the \emph{omega limit set}, $\omega(y)$ is a \emph{fixed point} for
	any $y$. So, we can assume that $Per(f)\neq Fix(f)$. Now, define:

	$$ \displaystyle   \chi(x) =\begin{cases} \frac{1}{2} &\text{if $(f(x)-x)(f^2(x)-f(x))\leqslant  0$,}\\ {0}&\text{if
			$(f(x)-x)(f^2(x)-f(x))>0$.}\end{cases}$$

	For any non-fixed periodic point $y$ of period $p(y)$, the integer
	$l(y)= \displaystyle  \sum^{p(y)-1}_{i=0}\chi(f^i(y))$ is at most $\frac{p(y)}{2}$ and is the
	same for all points from the orbit of $y$. The quantity $\frac{l(y)}{p(y)} = orn(y)$ is called the \emph{over-rotation number} of $y$ and the pair $orp(y)=(l(y),
	p(y))$ is called the {\it over-rotation pair} of $y$. In an \emph{over-rotation pair} $(p,q)$, $p$ and $q$ are integers and $0<\frac{p}{q}
	\leqslant \frac{1}{2} $.  	Like before, we can transform all \emph{over-rotation pairs} of \emph{cycles} of a given map $f$ into \emph{modified over-rotation pairs} of $f$ and denote the \emph{modified over-rotation pair} of a cycle $P$ and the set of all \emph{modified over-rotation pairs} of \emph{cycles} of $f$ by $mop(P)$ and $mop(f)$ respectively. Then, again $ mop(f) \subset \mathbb{M}$ and the following theorem holds:

	\begin{theorem}[\cite{BM1}]\label{modified:over:rot}
		If $ f :[0,1] \to [0,1]$ is a continuous interval map with a non-fixed periodic point, then $ mop(f) = [(t_1, m_1), (\frac{1}{2}, 3) ]$ for some $(t_1, m_1) \in \mathbb{M}$. Moreover, for every $ (t_1, m_1) \in \mathbb{M}$, there exists a continuous map $f : [0,1] \to [0,1]$ with $ mop(f) = [(t_1, m_1), (\frac{1}{2}, 3)]$. 
	\end{theorem}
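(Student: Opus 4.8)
The plan is to establish the two assertions in turn: that $mop(f)$ always has the stated convex-hull form, and that every such hull is realized by some map. Throughout I would work with the observable $\chi$ and keep the circle case, Theorem \ref{circle:maps}, as a template, since the interval statement is its over-rotation analogue. A secondary route, which I would keep in reserve, is to \emph{unfold} $f$ into a degree-one circle map $F\in\mathcal{L}_1$ whose ordinary rotation pairs encode the over-rotation pairs of $f$, and then read the structure off Theorem \ref{circle:maps} directly.

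I would first pin down the upper endpoint. Since $\chi\le\frac{1}{2}$ pointwise, $l(y)=\sum_{i=0}^{p(y)-1}\chi(f^i(y))\le p(y)/2$ for every non-fixed periodic point, so no over-rotation number exceeds $\frac{1}{2}$ and the $t$-coordinate of the upper endpoint is forced to be $\frac{1}{2}$. By Theorem \ref{shar} a non-fixed periodic point forces a period-two orbit, and a direct computation shows such an orbit has over-rotation number $\frac{1}{2}$; hence $\frac{1}{2}$ is attained. The substantive point is that the entire prong over $\frac{1}{2}$, up to the Sharkovsky-maximal mark $3$, must be present, i.e. the upper endpoint is exactly $(\frac{1}{2},3)$. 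I expect this from a Sharkovsky-type forcing statement for over-rotation-$\frac{1}{2}$ orbits: once any over-rotation number below $\frac{1}{2}$ occurs, orbits of over-rotation number $\frac{1}{2}$ are forced with every admissible multiplier $m\in Sh(3)$, the remaining degenerate cases being checked by hand.

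For the convexity claim I would build the covering (Markov) graph of $f$ on the partition determined by a given cycle together with the fixed points of $f$, and argue by a loop-construction that the realized over-rotation numbers fill an entire interval $[t_1,\frac{1}{2}]$: given cycles with over-rotation numbers $t'<t''$, concatenating the corresponding loops in the graph in the right proportions produces, for each rational $t\in(t',t'')$, a cycle with over-rotation number exactly $t$. At a fixed rational $t$ the set of realized multipliers $m$ should be upward closed in $\sha$, which reproduces the $Sh(m_i)$ sets in the definition of the convex hull; identifying the extremal realized pair as the lower endpoint $(t_1,m_1)$ then finishes the structural direction. Alternatively one transports all of this through the unfolding and quotes Theorem \ref{circle:maps}, the fixed right endpoint $(\frac{1}{2},3)$ reflecting the symmetry the unfolding imposes.

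The realization direction is constructive: for a target $(t_1,m_1)\in\mathbb{M}$ I would write down an explicit piecewise-linear Markov map whose graph carries loops of exactly the over-rotation numbers of $[t_1,\frac{1}{2}]$ and no smaller ones, realizing the cycle coded by $(t_1,m_1)$ together with the over-rotation-$\frac{1}{2}$ orbits marking $(\frac{1}{2},3)$, and use a monotonicity estimate on loop weights to exclude over-rotation numbers below $t_1$; or realize the corresponding circle datum by the second half of Theorem \ref{circle:maps} and fold it back. The main obstacle is the forcing theory itself: showing that the realized over-rotation data form \emph{exactly} a convex hull — in particular the sharp identification of the endpoints $(t_1,m_1)$ and $(\frac{1}{2},3)$ — rather than just an interval of over-rotation numbers, because the forcing among orbits sharing an over-rotation number is genuinely finer than, and does not reduce to, the period-forcing of Theorem \ref{shar}.
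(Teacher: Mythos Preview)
The paper does not prove this theorem: it is stated in the introduction with the citation \cite{BM1} and no proof is given. It serves only as background for the later definition of over-rotation intervals and over-twist patterns. There is therefore nothing in the paper to compare your proposal against.

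On your sketch itself: the broad strategy (bound $\chi\le\frac12$ to locate the right endpoint, use loop concatenation in the Markov graph to fill in rationals, and construct explicit piecewise-linear models for realization) is the standard one and is essentially what \cite{BM1} does. One caution: your ``secondary route'' of unfolding $f$ into a degree-one circle map and reading off Theorem~\ref{circle:maps} is anachronistic here, since the heaved map $F_f$ is the new construction of the present paper and was not available to \cite{BM1}; moreover, as Section~\ref{comparison:over:numbers} shows, unfolding numbers and over-rotation numbers do \emph{not} coincide in general (Example~\ref{counter:example}), so that route would not recover $mop(f)$ without further work.
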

	
	Given an interval map $f$, denote by $I_f$, the \emph{closure of the union of over-rotation numbers} of $f$-\emph{periodic points}, and call $I_f$ the \emph{over-rotation interval} of $f$. By Theorem \ref{shar}, any map $f$ with a \emph{non-fixed periodic
		points} has a \emph{cycle} of \emph{period} 2 and such as cycle has \emph{over-rotation number} $\frac{1}{2}$;
	by Theorem \ref{modified:over:rot}, if $\rho(P)=\frac{p}{q}$ for a cycle $P$ then
	$[\frac{p}{q}, \frac12]\subset I_f$; hence for any interval map $f$ there exists
	a number $r_f$,  $0\leqslant r_f < \frac12,$ such that $I_f=[r_f, \frac12]$. 
		A \emph{cycle} $P$ is called \emph{over-twist} if it does not \emph{force} other \emph{cycles} with the same \emph{over-rotation number}. They are the analogues of \emph{twist cycles} for \emph{interval maps}.  By \cite{BM1} and by \emph{properties of forcing relation}, for any
	$\frac{p}{q}\in (r_f, \frac{1}{2})$, $f$ has an \emph{over-twist cycle}
	of \emph{over-rotation number} $\frac{p}{q}$. We call a \emph{pattern} $\pi$, an \emph{over-twist pattern} if any \emph{cycle} which \emph{exhibits} it is an \emph{over-twist} cycle. Now we discuss our plans for the paper.

In the papers  \cite{BB2} and \cite{BS}, an \emph{algorithm} to find out \emph{over-rotation interval} of an \emph{interval map} of \emph{modality} less than or equal to $2$ was concocted and the complete description of the \emph{dynamics} of \emph{unimodal} and \emph{bimodal} \emph{over-twist patterns} was corroborated.  However, the task of determining the \emph{over-rotation interval} for an \emph{interval map} $f:[0,1] \to [0,1]$  with \emph{modality greater than or equal to} $3$, along with elucidating \emph{over-twist patterns} of the same \emph{modality}, presents significant challenges.

This maneuvered us to develop a new \emph{invariant} for a  \emph{cycle} of an \emph{interval} map,   amalgamating the essence  of \emph{over-rotation theory} of \emph{interval maps} and \emph{rotation theory} of \emph{degree one circle maps}. We call this new invariant:   \emph{unfolding numbers}.  The basic idea is as follows. Given a \emph{continuous map} $f: [0,1] \to [0,1]$, we construct the map $g_f :[0, \frac{1}{2}] \to [0, \frac{1}{2}]$ which is \emph{conjugate} to the map $f$ via the  \emph{linear contraction} $\lambda: [0,1] \to [0, \frac{1}{2}]$ defined by $\lambda(x) = \frac{x}{2}$. We call $g_f$, the \emph{miniature model map} corresponding to the map $f$.   We \emph{fold} the \emph{unit square} $[0,\frac{1}{2}] \times [0,\frac{1}{2}]$ containing the \emph{graph} of $g_f$ \emph{vertically} across the line $y =\frac{1}{2}$.  This yields a map $p_f$ in $[0,\frac{1}{2}] \times [\frac{1}{2}, 1]$. The \emph{unit square} $[0,\frac{1}{2}] \times [\frac{1}{2}, 1]$ is now \emph{folded} \emph{horizontally} across $x=\frac{1}{2}$ to obtain a map $q_f$ in $[\frac{1}{2},1] \times [\frac{1}{2},1] $.  Finally, the \emph{unit square}  $[\frac{1}{2},1] \times [\frac{1}{2},1]$ is  \emph{folded} \emph{vertically} across $y=1$ to obtain a map $r_f$ in $[\frac{1}{2},1] \times [1, \frac{3}{2}] $. Selecting \emph{parts} of \emph{graph} of the \emph{maps}: $g_f$ , $p_f$, $q_f$ and $r_f$  yields a \emph{degree one map} $F_f : \mathbb{R} \to \mathbb{R}$ of the real line (\emph{for details see Section} \ref{first:main:section}). The \emph{unfolding number} of a \emph{periodic orbit} $P$ of the map $f: [0,1] \to [0,1]$ is defined to  to be \emph{rotation number} of the \emph{lifted periodic orbit} $P'$ of $F_f$ corresponding to $P$. We begin study of \emph{unfolding numbers} for \emph{interval maps} in this paper.

	The paper is divided into $8$ \emph{sections}. The  manuscript ``\emph{unfolds}" as follows:

\begin{enumerate}
	\item Section \ref{introduction}, serves as the \emph{introductory segment} of the paper, providing an initial \emph{overview} and context for the subsequent content. 
\item Section \ref{preliminaries}, lays the groundwork with essential \emph{preliminaries}.

\item In Section \ref{first:main:section} ,we define \emph{unfolding numbers} and enact the \emph{consistency} of the \emph{theory}.

\item In Section \ref{unfolding:interval:section}, we portray that, that the \emph{closure} of the set of \emph{unfolding numbers} of \emph{periodic orbits} of a given \emph{interval map} $f$ forms an interval of the form $Unf(f) = [u_f, \frac{1}{2}]$ for some $u_f \geqslant 0$, called its \emph{unfolding interval}. The \emph{existence} of a \emph{point} $z \in [0, 1]$ such that the map $f$ \emph{restricted on} the \emph{omega limit set} $Z_f = \omega_f(z)$ of $z$ is \emph{strictly ergodic},  and such that for any point $y \in Z_f$, the \emph{unfolding number} of $y$ under $f$ coincides with $u_f$ is also revealed.

\item It is a natural question to ask: can we characterize the \emph{dynamics} of a \emph{periodic orbit} of $f$ given its \emph{unfolding number}? For this, at the outset it is vital to see whether \emph{unfolding number} of $P$ can be read directly from the \emph{combinatorics} of $P$ without the need to construct the map $F_f$.  We take precisely these \emph{questions} in Section \ref{reading:unfolding:comb}.

\item In Section \ref{forced:unfolding:interval}, we introduce the idea of \emph{forced unfolding interval}. We elucidate \emph{patterns} for whom the \emph{ forced unfolding interval} is $[0, \frac{1}{2}]$.

\item In Section \ref{comparison:over:numbers}, we forge \emph{comparison} between \emph{unfolding numbers} and \emph{over-rotation numbers}: we show  that while these two \emph{rotation numbers} are \emph{disparate} in flavor, there exists \emph{special patterns} $\pi_s$ called \emph{sheer patterns}, for whom \emph{unfolding numbers} equals \emph{over-rotation numbers}!

\end{enumerate}

\bigskip 

From the \emph{form} of \emph{unfolding interval} $Unf(f)$,  of a given \emph{interval map} $f$, it is evident that  a \emph{single number}, $u_f$, encapsulates concealed information about the \emph{dynamical complexity} of $f$. Further $u_f$ can be computed using a straightforward algorithm for interval maps of any \emph{modality}. Thus, \emph{unfolding numbers} provides a ``new" effective means of computing the \emph{dynamical complexity} of an \emph{interval map}  which is \emph{invariant} under \emph{topological conjugacy}.

\subsection*{Acknowledgment} The author is thankful to his teacher, Professor Alexander Blokh from University of Alabama at Birmingham, USA with whom the problem was first discussed.

\section{Preliminaries}\label{preliminaries}

\subsection{Notations}\label{subsec:notations}

In this paper, we specifically consider \emph{continuous maps} $f: [0,1] \to [0,1]$ characterized by a \emph{unique point} of \emph{absolute maximum}, denoted by $M_f$, and a \emph{unique point} of \emph{absolute minimum}, denoted by $m_f$. Moreover, for any \emph{periodic orbit} $P$ of $f$, its \emph{leftmost endpoint} is represented by $le(P)$ and its \emph{right end point} is denoted by $ri(P)$. Furthermore, for any \emph{point} $x$ and a map $g$, the \emph{trajectory} of $x$ under $g$, that is,  the set $\{ x, g(x), g^2(x), \dots\}$ is symbolized as $T_g(x)$. Clearly, if $T_g(x)$ is \emph{finite} with $q$ \emph{elements}, $x$ is \emph{periodic} under $g$ with  \emph{period} $q$ and  $P = T_g(x)$ is its \emph{periodic orbit}. For a set $A$, let $\overline{A}$ denotes its \emph{closure}. Additionally, given a map $f : [0,1] \to [0,1]$, $g_f : [0, \frac{1}{2}] \to [0, \frac{1}{2}]$ and $F_f : \mathbb{R} \to \mathbb{R}$,  respectively signify \emph{miniature model map} and \emph{heaved map} corresponding to the map $f$ (\emph{as defined in} Section \ref{first:main:section}).

\subsection{Forcing among patterns}

\begin{definition}
	A \emph{pattern} $A$ \emph{forces} \emph{pattern} $B$ if every continuous map having a
	\emph{cycle} of \emph{pattern} $A$ has a \emph{cycle} of \emph{pattern} $B$. By \cite{Ba}, \emph{forcing among patterns}
	is a \emph{partial ordering}.
\end{definition}
 A useful \emph{algorithm} allows one to describe all \emph{patterns} \emph{forced} by a given \emph{pattern} $A$. Namely, consider a \emph{cycle} $P$ of \emph{pattern} $A$; assume that \emph{leftmost} and \emph{rightmost} \emph{points} of $P$ are $a$ and 
$b$ respectively. Every \emph{component} of $[a, b]\setminus P$ is said to be a
\emph{$P$-basic interval}. \emph{Extend} the map from $P$ to the interval $[a,
b]$ by defining it \emph{linearly} on each $P$-\emph{basic interval} and call the
resulting map $f_P$,  the \emph{$P$-linear map}. The following result follows:

\begin{theorem}[\cite{Ba,alm00}]
	
	The patterns of all cycles of $f_P$ are exactly the patterns forced by the pattern of $P$
	\end{theorem}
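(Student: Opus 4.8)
The plan is to prove the two inclusions separately, taking advantage of the fact that $f_P$ is itself a genuine continuous interval map carrying a cycle of the prescribed pattern. Write $A$ for the pattern of $P$, let $\mathcal{F}(A)$ denote the set of all patterns forced by $A$, and let $\mathcal{P}(f_P)$ denote the set of patterns exhibited by cycles of $f_P$; the goal is the equality $\mathcal{F}(A)=\mathcal{P}(f_P)$. The inclusion $\mathcal{F}(A)\subseteq\mathcal{P}(f_P)$ is essentially a tautology: $f_P$ is a continuous interval map possessing the cycle $P$ of pattern $A$, so if $A$ forces a pattern $B$, then by the very definition of the forcing relation $f_P$ must itself carry a cycle of pattern $B$, whence $B\in\mathcal{P}(f_P)$. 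Thus all the real content lies in the reverse inclusion $\mathcal{P}(f_P)\subseteq\mathcal{F}(A)$, which asserts that every pattern realized by a cycle of $f_P$ is realized by \emph{every} continuous map admitting a cycle of pattern $A$.

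To prove this I would introduce the combinatorial machinery of the Markov (transition) graph. Label the $P$-basic intervals $I_1,\dots,I_{n-1}$ (where $n=\abs{P}$) in their left-to-right order, and declare an oriented edge $I_j\to I_k$ whenever $f_P(I_j)\supseteq I_k$. Because $f_P$ is linear on each basic interval and sends endpoints of basic intervals (points of $P$) to points of $P$, the image $f_P(I_j)$ is exactly the smallest closed interval spanned by the images of the two endpoints, hence a union of consecutive basic intervals; consequently the covering relation $I_j\to I_k$ is determined entirely by the cyclic permutation $A$. The first key step is the standard \emph{loop-to-cycle correspondence}: every periodic orbit of $f_P$ follows a loop $I_{j_0}\to I_{j_1}\to\dots\to I_{j_{\ell-1}}\to I_{j_0}$ in this graph, and conversely every such loop is followed by a periodic point (the Itinerary Lemma). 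The pattern of the resulting cycle is recovered from the loop together with the left-to-right ordering of the basic intervals.

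Now let $g$ be an arbitrary continuous map with a cycle $Q$ of pattern $A$, and form the $Q$-basic intervals $J_1,\dots,J_{n-1}$ in the same left-to-right order as the $I_j$. Since $Q$ shares the underlying cyclic permutation of $P$, the endpoints of $J_j$ are carried by $g$ to the points of $Q$ occupying precisely the positions to which $f_P$ sends the endpoints of $I_j$; by the intermediate value theorem $g(J_j)$ contains the entire interval spanned by those two images, so $g(J_j)\supseteq J_k$ for every $k$ with $I_j\to I_k$. Hence the Markov graph of $(g,Q)$ contains every edge of the Markov graph of $(f_P,P)$. Every loop available to $f_P$ is therefore available to $g$, and applying the loop-to-cycle lemma to $g$ produces a periodic point of $g$ following that same loop, and since the basic intervals $I_k$ and $J_k$ appear in the same spatial order, the resulting cycle of $g$ has the same pattern as the cycle of $f_P$ attached to that loop. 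This gives $\mathcal{P}(f_P)\subseteq\mathcal{F}(A)$ and closes the argument.

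The main obstacle I anticipate is the fine print of the loop-to-cycle correspondence: one must guarantee that the periodic point extracted from a loop genuinely realizes the intended pattern, with the correct period and the correct spatial ordering, rather than merely visiting the prescribed basic intervals. This forces one to restrict attention to loops that are not proper repetitions of shorter loops and to exclude degenerate situations — for instance loops through a single interval that collapse to a fixed point, or orbits slipping onto an endpoint shared by two adjacent basic intervals, where the itinerary is ambiguous. Making the passage from \emph{itinerary} to \emph{pattern} rigorous, namely showing that the left-to-right order of the extracted periodic orbit is dictated by the loop and agrees between $f_P$ and $g$, is the technical crux, and it is exactly here that the hypothesis that $Q$ and $P$ share the same \emph{pattern} (and not merely the same \emph{period}) is indispensable.
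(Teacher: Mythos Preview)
The paper does not supply its own proof of this theorem: it is stated in the preliminaries section as a known result and attributed to \cite{Ba,alm00} without argument. There is therefore nothing in the present paper to compare your proposal against.

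That said, your outline is the standard route taken in the cited references, particularly \cite{alm00}: one builds the Markov transition graph on the $P$-basic intervals, observes that the graph depends only on the pattern $A$, and then transports loops from $f_P$ to an arbitrary map $g$ carrying a cycle of pattern $A$. You have correctly identified the genuine technical issue, namely that the Itinerary Lemma (your Lemma~\ref{ALM2}) guarantees a periodic point following a prescribed loop but does not immediately guarantee that this point has the \emph{minimal} period equal to the loop length, nor that its orbit avoids the endpoints of the basic intervals. In \cite{alm00} this is handled by restricting to \emph{primitive} (non-repetitive) loops and by a careful argument showing that among the periodic points following a given primitive loop one can always select one whose orbit lies in the interiors of the basic intervals, so that its spatial order---and hence its pattern---is unambiguously determined by the loop. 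Your sketch flags this obstacle but does not resolve it; completing the proof requires exactly this refinement, which is where most of the work in the cited sources actually lies.
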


\subsection{Loops of intervals}\label{loops:intervals}

\begin{definition}
	 Let $\{I_n\}$, $n\geqslant 0$ be a \emph{sequence} of  \emph{closed intervals} such that $f(I_j) \supset
	I_{j+1}$ for $ j \geqslant 0 $; then we say that $\{I_n\}$, $n\geqslant 0$ is an
	\emph{$f-$chain} or simply a \emph{chain} of intervals. If a \emph{finite
		chain of intervals} $\alpha = \{I_0, I_1, \dots , I_{k-1} \}$ is such that $f(I_{k-1})
	\supset I_0 $, then we call $\alpha = \{I_0, I_1, \dots , I_{k-1} \}$, an
	$f$-\emph{loop} or simply a \emph{loop} of  \emph{intervals}.
	
\end{definition}

\begin{lemma}[\cite{alm00}]\label{ALM2}
Let $\alpha = \{I_0, I_1, \dots , I_{k-1} \}$ be a loop of intervals. Then
		there is a periodic point $x \in I_0$ such that $f^j(x) \in I_j $ for $
		0 \leqslant  j \leqslant  k-1 $ and $ f^k(x)=x$.  $Q = \{ x, f(x), f^2(x), \dots f^{k-1}(x) \}$ is called  the periodic orbit associated with the loop $\alpha$. 

\end{lemma}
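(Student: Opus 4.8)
The plan is to prove Lemma \ref{ALM2} by constructing the periodic point as a fixed point of $f^k$ restricted to a suitably chosen subinterval of $I_0$, using the covering relations $f(I_j)\supset I_{j+1}$ repeatedly together with a standard one-dimensional covering argument. The key tool is the elementary fact that if $J$ and $K$ are closed intervals with $f(J)\supset K$, then there is a closed subinterval $J'\subset J$ with $f(J')=K$; this follows from the intermediate value theorem applied to the continuous map $f$ on the interval $J$ (pick preimages of the two endpoints of $K$ inside $J$ and take the subinterval they bound). I would state and prove this as a preliminary observation, since it is the workhorse of the whole argument.

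With this observation in hand, the main step is a backward induction that pulls the loop condition back to $I_0$. Concretely, I would construct a nested sequence of closed subintervals witnessing the chain. Working from the end of the loop backward, I first use $f(I_{k-1})\supset I_0$ to find a closed interval $K_{k-1}\subset I_{k-1}$ with $f(K_{k-1})=I_0$. Then, since $f(I_{k-2})\supset I_{k-1}\supset K_{k-1}$, I find $K_{k-2}\subset I_{k-2}$ with $f(K_{k-2})=K_{k-1}$, and continue in this fashion to obtain closed intervals $K_j\subset I_j$ with $f(K_j)=K_{j+1}$ for $0\leqslant j\leqslant k-2$ and $f(K_{k-1})=I_0$. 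Tracing this through, one checks that $f^k(K_0)=I_0\supset K_0$, so $f^k$ maps the closed interval $K_0$ onto a superset of itself. By the intermediate value theorem, any continuous self-map sending a closed interval onto a set containing it has a fixed point: $f^k$ has a fixed point $x\in K_0$. This $x$ is the desired periodic point, and by construction $f^j(x)\in K_j\subset I_j$ for each $j$, with $f^k(x)=x$.

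It remains to confirm that the orbit $Q=\{x,f(x),\dots,f^{k-1}(x)\}$ behaves as claimed. Here I would note the possible subtlety that the loop produces a periodic point whose actual least period might divide $k$ rather than equal $k$; the lemma only asserts $f^k(x)=x$ (not that the period is exactly $k$), so this causes no difficulty for the statement as written, but it is worth remarking so that the reader understands $Q$ is the set of iterates and not necessarily a $k$-element orbit. The fixed-point-from-covering step and the covering-gives-exact-image lemma are both standard, so no essential obstacle arises; the only point requiring care is the bookkeeping in the backward induction to ensure each $K_j$ is genuinely contained in $I_j$ and that the images line up so that the $k$-fold composition covers $I_0$.

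The main obstacle, such as it is, lies in presenting the covering-relation machinery cleanly rather than in any deep idea: one must be careful that the intermediate value theorem is applied to produce a subinterval whose image is \emph{exactly} the target (so that the next pullback step has something to grab onto), and one must handle the orientation-agnostic nature of $f$ on each $I_j$, since $f$ need not be monotone and the preimage endpoints may occur in either order. I would address this by always selecting the minimal closed subinterval of $I_j$ whose endpoints map to the two endpoints of $K_{j+1}$, which guarantees surjectivity onto $K_{j+1}$ without assuming monotonicity.
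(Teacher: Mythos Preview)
The paper does not give a proof of this lemma; it is stated in the Preliminaries as a citation from \cite{alm00} and used as a black box. Your argument is the standard backward-pullback proof of this classical covering lemma and is correct, so there is nothing to compare against and no gap to flag.
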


\subsection{Results on Degree one circle maps}\label{subsec:results:degree:one:maps}

Let's examine some properties of  \emph{degree one circle maps} that are pertinent to our discussion in this paper. Let $\mathcal{L}_1$ represents the \emph{collection} of all \emph{liftings} of \emph{continuous degree one self-mappings} of $\uc$ under the \emph{sup norm}, while $ \mathcal{L}_1' \subset \mathcal{L}_1$  consists of \emph{non-decreasing elements} of $\mathcal{L}_1$. Subsequently, the ensuing result follows: 

\begin{theorem}[\cite{alm00,mis82}]\label{non:decreasing:exists}
	If $F_f \in \mathcal{L}_1'$ is a lifting of a circle map $f: \uc \to \uc$ then $\rho_{F_f}(x)$ exists for all $x \in \mathbb{R}$ and is independent of $x$. Moreover, it is rational if and only if $f: \uc \to \uc$ has a periodic point.
\end{theorem}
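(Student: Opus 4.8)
The plan is to prove this as the classical Poincar\'e rotation number theorem specialized to the monotone (non-decreasing) degree one case, exploiting both monotonicity and the commutation $F_f(X+1)=F_f(X)+1$. Write $F := F_f$ and set $\phi_n(X) = F^n(X)-X$. Since $F$ has degree one, an easy induction gives $F^n(X+1)=F^n(X)+1$, so each $\phi_n$ is continuous and $1$-periodic; hence it suffices to control $\phi_n$ on $[0,1]$, where sup and inf are attained.

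First I would establish existence and $X$-independence of $\rho_F$ through a subadditivity (Fekete) argument. Put $a_n = \sup_X \phi_n(X)$ and $c_n = \inf_X \phi_n(X)$, both finite by $1$-periodicity and continuity. Writing $F^{m+n}(X)-X = (F^m(Y)-Y)+(F^n(X)-X)$ with $Y=F^n(X)$ shows $a_{m+n}\le a_m+a_n$ and $c_{m+n}\ge c_m+c_n$, so by Fekete's lemma $a_n/n$ and $c_n/n$ both converge. The crucial monotone input is the oscillation bound $a_n-c_n \le 1$: for $0\le Y-X\le 1$, monotonicity of $F^n$ gives $F^n(X)\le F^n(Y)\le F^n(X+1)=F^n(X)+1$, whence $\lvert \phi_n(Y)-\phi_n(X)\rvert \le 1$, and $1$-periodicity lets the sup and inf over $\mathbb{R}$ be read off $[0,1]$, yielding the bound. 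Consequently $a_n/n$ and $c_n/n$ share a common limit $\rho$, and since $c_n \le \phi_n(X)\le a_n$ for every $X$, the squeeze theorem gives $\phi_n(X)/n \to \rho$ for all $X$; this simultaneously delivers existence and $X$-independence.

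For the rational characterization I would argue both implications. If $f$ has a periodic point, then $F^q(X_0)=X_0+p$ for some $X_0$ and integers $p,q$ with $q\ge 1$; induction via the translation identity gives $F^{nq}(X_0)=X_0+np$, so the subsequence $\phi_{nq}(X_0)/(nq)=p/q$ forces $\rho = p/q \in \mathbb{Q}$. Conversely, if $\rho=p/q$, consider $G(X)=F^q(X)-p$, which again lies in $\mathcal{L}_1'$ and satisfies $G^n(X)=F^{qn}(X)-np$, hence has rotation number $q\rho-p=0$. It then remains to show such a $G$ has a fixed point: if not, continuity and $1$-periodicity of $G(X)-X$ force it to have one fixed sign (a sign change would produce a zero), say $G(X)-X \ge \delta>0$ with $\delta$ the minimum attained on the compact period; then $G^n(X)\ge X+n\delta$ contradicts rotation number $0$, and symmetrically for the negative case. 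So $G$ has a fixed point $X_0$, i.e. $F^q(X_0)=X_0+p$, meaning $\pi(X_0)$ is periodic for $f$.

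The main obstacle is the oscillation bound $a_n-c_n\le 1$: this is exactly the place where non-decreasingness is indispensable, and without it the two Fekete limits could differ, so that $\rho$ would fail to be $X$-independent (indeed for non-monotone degree one maps one only gets a rotation \emph{interval}). Everything else is bookkeeping with the degree one translation identity, provided one is careful that the sign dichotomy for $G(X)-X$ in the converse genuinely uses continuity together with periodicity, and that the gap $\delta$ is attained because $G(X)-X$ is continuous and $1$-periodic, hence governed by its values on a compact interval.
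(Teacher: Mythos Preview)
Your proof is correct and is essentially the classical Poincar\'e argument (subadditivity/Fekete plus the monotone oscillation bound, then the $G=F^q-p$ reduction for the rational case). The paper does not give its own proof of this theorem: it is quoted from \cite{alm00,mis82} as a preliminary result, so there is nothing to compare against beyond noting that your argument is the standard one found in those references.
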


This \emph{number} is called the \emph{rotation number} of  the map $F_f$ and denoted by $\rho(F_f)$.  Further, we can show:

\begin{lemma}[\cite{alm00,mis82}]\label{degree:one:result:0} 
	The function $\chi: \mathcal{L}_1' \to \mathbb{R}$ defined by $\chi(F) = \rho(F)$ for all $F \in  \mathcal{L}_1'  $ is continuous.
\end{lemma}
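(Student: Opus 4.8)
The plan is to obtain continuity from a single uniform a priori estimate, namely that for every $F \in \mathcal{L}_1'$, every $n \geq 1$ and every $X \in \R$ one has $|F^n(X) - X - n\rho(F)| \leq 1$ (here $\rho(F)$ exists and is $X$-independent by Theorem \ref{non:decreasing:exists}). Granting this, continuity is immediate: evaluating at the base point $X = 0$ and using the triangle inequality for any $F, F_0 \in \mathcal{L}_1'$,
\[ |\rho(F) - \rho(F_0)| \leq \tfrac1n|F^n(0) - n\rho(F)| + \tfrac1n|F_0^n(0) - n\rho(F_0)| + \tfrac1n|F^n(0) - F_0^n(0)| \leq \tfrac2n + \tfrac1n\|F^n - F_0^n\|_\infty. \]
So for a prescribed $\varepsilon$ I would first choose $n$ with $2/n < \varepsilon/2$, and then make $\|F^n - F_0^n\|_\infty$ small by taking $F$ close to $F_0$; this is where the second ingredient, continuity of the iteration map, enters.

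For the a priori estimate I would work with $\phi_n(X) := F^n(X) - X$ and set $a_n := \max_X \phi_n$, $b_n := \min_X \phi_n$ (the extrema exist because $F \in \mathcal{L}_1'$ satisfies $F(X+1) = F(X)+1$, so $F^n$ does too and $\phi_n$ is continuous and $1$-periodic). Monotonicity of $F^n$ together with $F^n(X+1) = F^n(X) + 1$ forces the oscillation bound $a_n - b_n \leq 1$, since on any interval of length $\leq 1$ the increment of $\phi_n$ lies in $[-1,1]$. Sub/super-additivity $a_{m+n} \leq a_m + a_n$ and $b_{m+n} \geq b_m + b_n$, read off from $\phi_{m+n}(X) = \phi_m(F^n(X)) + \phi_n(X)$, yields by Fekete's lemma that $a_n/n \to \inf_n a_n/n$ and $b_n/n \to \sup_n b_n/n$; as $a_n - b_n \leq 1$, both limits coincide, and by the definition of $\rho(F)$ (squeezing $\phi_n(X)/n$ between $b_n/n$ and $a_n/n$) they equal $\rho(F)$, with moreover $b_n \leq n\rho(F) \leq a_n$. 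Hence $\phi_n(X) \in [b_n, a_n] \subseteq [n\rho(F)-1,\, n\rho(F)+1]$, which is exactly the claimed bound.

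For the iteration map I would fix $F_0$ and use that it is uniformly continuous (being continuous and commuting with the unit translation, it is governed by its restriction to $[0,1]$); let $\omega$ denote its modulus of continuity. Splitting $F^n(X) - F_0^n(X) = [F - F_0](F^{n-1}(X)) + [F_0(F^{n-1}(X)) - F_0(F_0^{n-1}(X))]$ and taking suprema gives the recursion $\|F^n - F_0^n\|_\infty \leq \|F - F_0\|_\infty + \omega(\|F^{n-1} - F_0^{n-1}\|_\infty)$, whence by induction $\|F^n - F_0^n\|_\infty \to 0$ as $\|F - F_0\|_\infty \to 0$ for each fixed $n$. Feeding this into the displayed inequality completes the argument. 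I expect the main obstacle to be the first ingredient: pinning down the uniform $O(1)$ error that makes the two boundary terms $\leq 1/n$ \emph{independently of} $F$. The crux is the oscillation estimate $a_n - b_n \leq 1$, which rests essentially on the monotonicity defining $\mathcal{L}_1'$ and would fail for arbitrary degree-one liftings; by contrast the continuity of the iterate is routine.
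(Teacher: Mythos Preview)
Your proof is correct. The paper itself does not supply a proof of this lemma---it is quoted as a known result from \cite{alm00,mis82}---so there is nothing in the text to compare against; your argument is essentially the classical one from those references, resting on the uniform bound $|F^n(X)-X-n\rho(F)|\le 1$ (obtained via monotonicity and the sub/super-additivity of $a_n,b_n$) together with continuity of the $n$-th iterate at $F_0$.
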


\begin{definition}[\cite{alm00,mis82}]\label{lower:upper:bound:map}
	For $F \in \mathcal{L}_1$,  we define maps $F_l,F_u \in \mathcal{L}_1'$ by ``\emph{pouring water}" from \emph{below} and \emph{above} respectively on the \emph{graph} of $F$   respectively as follows:
	$F_{\ell}(x) = \inf \{ F(y) : y \geqslant x \}$ and $F_u(x) = \sup \{ F(y) : y \leqslant x \} $. We call the \emph{maps}: $F_{\ell}$ and $F_u$,  \emph{lower bound map} and \emph{upper bound map},  corresponding to the \emph{degree one map} $F$ respectively.  
\end{definition}

Let \emph{Const}($G$) be the union of all \emph{open intervals} on which $G$ is \emph{constant}. 

\begin{theorem}[\cite{alm00,mis82}]\label{degree:one:result:1} Then the following statements are true:
	
	\begin{enumerate}
		\item Let $F \in \mathcal{L}_1$. Then, $F_{\ell}(x) \leqslant F(x) \leqslant F_u(x)$ for every $x \in \mathbb{R}$. 
		\item Let $F, G \in \mathcal{L}_1$ with $F \leqslant G$, then $F_{\ell} \leqslant G_{\ell}$ and $F_u \leqslant G_u$. 
		\item Let $F \in \mathcal{L}_1'$. Then, $F_{\ell}= F_u = F$.
		\item The maps $F \mapsto F_{\ell}$ and $F \mapsto F_u$ are Lipschitz continuous with constant $1$ in the sup norm.
		\item If $F_{\ell}(x) \neq F(x)$ for some $x \in \mathbb{R}$, then $x \in Const(F_{\ell})$; similarly for $F_u$,
		
		\item Const($F$) $\subset$ Const($F_{\ell}$) $\cap$ Const($F_u$)
		
	\end{enumerate}

\end{theorem}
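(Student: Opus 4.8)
The plan is to establish the six items essentially in the order stated, since the later ones lean on the earlier ones. Throughout I use that every $F \in \mathcal{L}_1$ is continuous and satisfies $F(x+1) = F(x) + 1$, so that $F(y) \to +\infty$ as $y \to +\infty$ and $F(y) \to -\infty$ as $y \to -\infty$; I also record at the outset that $F_\ell$ and $F_u$ are non-decreasing and lie in $\mathcal{L}_1'$, which is immediate from their definitions as an infimum over a shrinking family $\{y : y \geq x\}$ and a supremum over a growing family $\{y : y \leq x\}$, together with the degree-one relation applied to $y \mapsto y+1$.

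Items (1)--(4) are soft. For (1), $x$ belongs to both $\{y : y \geq x\}$ and $\{y : y \leq x\}$, whence $F_\ell(x) \leq F(x) \leq F_u(x)$. Item (2) is just monotonicity of $\inf$ and $\sup$: if $F \leq G$ pointwise then $\inf_{y \geq x} F(y) \leq \inf_{y \geq x} G(y)$, and similarly for the suprema. For (3), a non-decreasing $F$ attains its infimum over $[x,\infty)$ at the left endpoint and its supremum over $(-\infty,x]$ at the right endpoint, giving $F_\ell(x) = F(x) = F_u(x)$. Item (4) I would deduce from (2) by sandwiching: writing $\epsilon = \|F - G\|_\infty$ we have $G - \epsilon \leq F \leq G + \epsilon$, and since adding a constant commutes with both operations, i.e. $(G+c)_\ell = G_\ell + c$ and $(G+c)_u = G_u + c$, item (2) yields $G_\ell - \epsilon \leq F_\ell \leq G_\ell + \epsilon$, so $\|F_\ell - G_\ell\|_\infty \leq \epsilon$; the same argument handles $F_u$.

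The crux is item (5), where the degree-one coercivity does the real work. Fix $x$ with $F_\ell(x) < F(x)$ and set $c = F_\ell(x)$. Because $F(y) \to +\infty$, the infimum defining $F_\ell(x)$ is actually attained at some $y^* \geq x$ with $F(y^*) = c < F(x)$, which forces $y^* > x$. Using continuity and $F(x) > c$ I choose $\delta > 0$ with $F > c$ on $(x-\delta, x+\delta)$; this pins the minimizer to the region $y^* \geq x + \delta$. A short check then gives $F_\ell \equiv c$ on $(x-\delta, x+\delta)$: to the right of $x$ the minimizer $y^*$ still lies in the admissible range, so the infimum is unchanged, and to the left of $x$ the newly admitted values all exceed $c$, so they cannot lower the infimum. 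Hence $x \in \const(F_\ell)$. The statement for $F_u$ is the mirror image, using $F(y) \to -\infty$ and a maximizer $y^* \leq x$. I expect this localisation argument, in particular the bookkeeping needed to place $x$ in an \emph{open} interval of constancy rather than merely at an endpoint, to be the main obstacle; the coercivity guaranteeing that the inf/sup is genuinely attained is precisely the ingredient that makes the two-sided extension clean.

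Finally, item (6) follows from (5) together with a direct computation. Let $x$ lie in an open interval $(a,b)$ on which $F \equiv c$. If $F_\ell(x) < F(x)$, then $x \in \const(F_\ell)$ by (5). Otherwise $F_\ell(x) = c$, and then monotonicity of $F_\ell$ combined with item (1) forces $F_\ell \equiv c$ on $[x,b)$, while splitting the infimum at $x$ (the values on $[x',x)$ equal $c$ and $\inf_{y\geq x} F = c$) shows $F_\ell \equiv c$ on $(a,x]$ as well; thus $F_\ell \equiv c$ on $(a,b)$ and again $x \in \const(F_\ell)$. The argument for $F_u$ is symmetric, so $\const(F) \subseteq \const(F_\ell) \cap \const(F_u)$.
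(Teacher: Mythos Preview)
Your proof is correct. Note, however, that the paper does not actually supply a proof of this theorem: it is quoted in the preliminaries as a result from \cite{alm00,mis82} and used as a black box. So there is no ``paper's own proof'' to compare against; what you have written is a self-contained verification of a cited fact. Your arguments for (1)--(4) are the standard ones, and your treatment of (5) --- attaining the infimum via the coercivity $F(y)\to+\infty$, then using continuity at $x$ to carve out a two-sided $\delta$-neighbourhood on which $F>c$, thereby pushing the minimizer past $x+\delta$ and freezing $F_\ell$ at $c$ on both sides --- is exactly the mechanism behind the result in the cited sources. The case split in (6) is perhaps slightly more elaborate than necessary (once you know $F\equiv c$ on $(a,b)$, monotonicity of $F_\ell$ together with $F_\ell\le F$ and the splitting of the infimum already force $F_\ell\equiv c$ on $(a,b)$ without invoking (5)), but it is certainly valid.
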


\begin{definition}
	Given, a \emph{continuous map},   $f : \uc \to \uc$,   its \emph{lifting} $F_f \in \mathcal{L}_1$ and \emph{point} $y \in \uc$, \emph{periodic} with  \emph{rotation number} $\frac{m}{n}$ and \emph{rotation pair} $(m,n)$,   the set $P' = \pi^{-1}(P) \subset \mathbb{R}$, where  $P \subset \uc$ is the \emph{periodic orbit} of $y \in \uc$ and $\pi: \mathbb{R} \to \uc$ is the \emph{natural projection}, is called   \emph{lifted periodic orbit} of $P$ (equivalently of $y \in \uc$) under $F_f : \mathbb{R} \to \mathbb{R}$. 
\end{definition}

We call $\frac{m}{n}$ and $(m,n)$ respectively  \emph{rotation number} and \emph{rotation pair} of $P \subset \uc$ under $f: \uc \to \uc$ (equivalently of $P'$ under $F_f: \mathbb{R} \to \mathbb{R}$). 

Conversely, \emph{lifted periodic orbits} can also defined independently: a set $Q' \subset \mathbb{R}$ is a \emph{lifted periodic orbit} of a \emph{degree one map} $F$ with \emph{period} $q$ and \emph{rotation number} $\frac{p}{q}$ if for any $ z \in Q'$, $F^q(z) = z+ p$. In such a case, there always exists a \emph{periodic orbit} $Q \subset \uc$ of $f: \uc \to \uc$ \emph{period} $q$ and \emph{rotation number} $\frac{p}{q}$ such that: $Q' = \pi^{-1}(Q)$.

\begin{theorem}[\cite{alm00,mis82}]\label{degree:one:result:2}
	If $F \in \mathcal{L}_1'$ is a lifting of a circle map $f: \uc \to \uc$ and $\rho(F) = \frac{k}{n}$ for coprime integers $k,n$, then $f$ has a cycle $P$ of period $n$ such that the lifted cycle $\pi^{-1}(P)$ of $F$ is disjoint from Const($F$). 
\end{theorem}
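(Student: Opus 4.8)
The plan is to prove Theorem \ref{degree:one:result:2} by combining the density of the rational rotation number inside the constancy intervals with the existence result of Lemma \ref{ALM2}. First I would fix the lifting $F \in \mathcal{L}_1'$ with $\rho(F) = \frac{k}{n}$, where $k,n$ are coprime. Since $F$ is non-decreasing, by Theorem \ref{non:decreasing:exists} the rotation number exists, is independent of $x$, and is rational precisely because $f$ has a periodic point; so a cycle $P$ of some period exists. The content of the theorem is the sharper claim that we may choose a cycle of period \emph{exactly} $n$ whose lifted orbit avoids $\mathrm{Const}(F)$, the union of open intervals on which $F$ is constant.

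The key geometric observation I would exploit is that $F$ is non-decreasing, so its graph has a staircase structure: on each maximal open interval $J \subset \mathrm{Const}(F)$ the value $F|_J$ is a constant, and these plateaus are exactly where the dynamics can ``stall.'' My plan is to locate a periodic point off the plateaus as follows. Because $\rho(F) = \frac{k}{n}$, standard degree-one theory (the same circle of ideas behind Theorem \ref{non:decreasing:exists}) gives a point $x_0$ with $F^n(x_0) = x_0 + k$. If $x_0 \notin \mathrm{Const}(F)$ and none of its iterates land in $\mathrm{Const}(F)$, we are done after checking the period is genuinely $n$ and not a proper divisor. The coprimality of $k$ and $n$ is what forces the period to be exactly $n$: if the period were a proper divisor $d \mid n$ with $d < n$, the rotation number would force $F^d(x_0) = x_0 + k'$ with $\frac{k'}{d} = \frac{k}{n}$, contradicting that $\frac{k}{n}$ is in lowest terms.

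The real work is handling the case where the naive fixed point of $F^n - k$ lands on a plateau. Here I would build a loop of intervals avoiding $\mathrm{Const}(F)$ and invoke Lemma \ref{ALM2}. Concretely, I would consider the set $K = \{x : F^n(x) = x + k\}$, which is closed and invariant under $x \mapsto x+1$, and examine its endpoints. At a boundary point of a plateau the map $F$ is still continuous and, being the endpoint, is \emph{not} interior to any constancy interval; the idea is that by choosing the appropriate endpoint of each plateau encountered along the orbit, one can select representative points that lie outside the open constancy intervals while preserving the relation $F^n(x) = x + k$. Equivalently, I would cover the essential dynamics by a loop $\{I_0, \dots, I_{n-1}\}$ of closed intervals, each chosen to meet the complement of $\mathrm{Const}(F)$ in its interior and satisfying $F(I_j) \supset I_{j+1}$ with the correct total displacement $k$; Lemma \ref{ALM2} then produces a periodic point $x \in I_0$ with $f^j(x) \in I_j$ and $F^n(x) = x + k$, and by construction its orbit is disjoint from $\mathrm{Const}(F)$.

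The main obstacle I anticipate is precisely this construction of a displacement-$k$ loop whose intervals avoid the interiors of the plateaus: one must argue that the plateaus cannot ``absorb'' an entire period's worth of the orbit without changing the rotation number, and that monotonicity of $F$ guarantees enough room outside $\mathrm{Const}(F)$ to route the loop with the right winding. I would lean on Theorem \ref{degree:one:result:1}, especially the relationship between $\mathrm{Const}(F)$ and $\mathrm{Const}(F_\ell), \mathrm{Const}(F_u)$, to control how plateaus interact with the dynamics, and on the fact that $\rho(F)$ is unchanged if we perturb $F$ off its plateaus (by continuity, Lemma \ref{degree:one:result:0}). Once the loop is in hand, verifying the period equals $n$ reduces to the coprimality argument above, completing the proof.
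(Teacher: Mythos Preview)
The paper does not supply its own proof of this theorem: it is quoted from \cite{alm00,mis82} as background on degree-one circle maps and is used later as a black box (notably in the proof of Theorem~\ref{unfolding:number:measure:1}). So there is no in-paper argument to compare your proposal against.

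On the substance of your plan: the intuitions are sound (endpoints of plateaus, and coprimality forcing the exact period~$n$), but routing the argument through Lemma~\ref{ALM2} is where it breaks down. That lemma only guarantees a periodic point lying \emph{somewhere} in each interval $I_j$ of the loop; it gives you no control over whether that point lands in $\mathrm{Const}(F)\cap I_j$ or in its complement. Saying the $I_j$ ``meet the complement of $\mathrm{Const}(F)$ in their interior'' does not prevent the periodic point produced by the lemma from sitting on a plateau. For a non-decreasing $F$ the nested-preimage construction behind Lemma~\ref{ALM2} degenerates to a single interval (possibly a whole plateau of $F^n$), so the loop machinery buys you nothing here.

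The standard argument in \cite{alm00,mis82} is more direct and avoids loops entirely: set $G=F^{n}-k$, so $G\in\mathcal{L}_1'$ with $\rho(G)=0$, and study $\mathrm{Fix}(G)$. On any constancy interval $(a,b)$ of $G$ the function $G(x)-x$ is strictly decreasing, so it has at most one zero there, and the sign pattern of $G(x)-x$ immediately to the right and left of such an isolated fixed point forces, by the intermediate value theorem, an additional fixed point between consecutive ones. This contradiction shows $\mathrm{Fix}(G)\not\subset\mathrm{Const}(G)=\mathrm{Const}(F^{n})$. Any $x^{*}\in\mathrm{Fix}(G)\setminus\mathrm{Const}(F^{n})$ then has its entire $F$-orbit disjoint from $\mathrm{Const}(F)$, because $F^{i}(x^{*})\in\mathrm{Const}(F)$ for some $0\le i<n$ would, by continuity, force $x^{*}\in\mathrm{Const}(F^{n})$. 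Your coprimality step then finishes the period computation. If you want to salvage your write-up, replace the loop-of-intervals paragraph with this sign-change argument on $F^{n}(x)-x-k$.
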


\begin{theorem}[\cite{alm00,mis82}]\label{degree:one:result:3} Let $F \in \mathcal{L}_1$. 
	
	 Define:  $F_{\mu} = (min(F, F_{\ell} + \mu))_u$,  for $\mu  \in [0, \lambda]$ and $ \lambda = sup_{x \in \mathbb{R}} (F - F_{\ell})(x)$.

	 Then the following results follow: 
	
	\begin{enumerate}
		\item $F_{\mu} \in \mathcal{L}_1'$ for all $\mu \in [0, \lambda]$,
		
		\item $F_0 = F_{\ell}$ and  $F_{\lambda} = F_u$
		
		\item the map $ \mu \mapsto F_{\mu}$ is Lipschitz continuous with constant 1,
		
		\item the map $ \mu \mapsto \rho(F_{\mu})$ is continuous,
		
		\item if $ \mu \leqslant \lambda $, then $ F_{\mu} \leqslant F_{\lambda}$,
		
		\item each $F_{\mu}$ coincides with $F$ outside $Const(F_{\mu})$,
		
		\item $Const(F) \subset Const(F_{\mu})$ for each $\mu$. 
	\end{enumerate}

\end{theorem}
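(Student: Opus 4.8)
The plan is to factor the whole construction through the auxiliary family $G_\mu := \min(F, F_{\ell} + \mu)$, $\mu \in [0,\lambda]$, so that $F_\mu = (G_\mu)_u$. I would first record that each $G_\mu$ lies in $\mathcal{L}_1$: it is continuous (a pointwise minimum of continuous maps) and of degree one, since both $F$ and $F_{\ell} + \mu$ satisfy $H(x+1) = H(x) + 1$ and this relation is preserved under pointwise minimum. Consequently $F_\mu = (G_\mu)_u \in \mathcal{L}_1'$ by the very definition of the upper bound map, which is exactly (1). I would also note that $\lambda < \infty$, since $F - F_{\ell}$ is continuous and $1$-periodic, hence bounded.

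Items (2)--(5) are then soft consequences of Theorem \ref{degree:one:result:1} and Lemma \ref{degree:one:result:0}. For (2), at $\mu = 0$ we have $G_0 = \min(F, F_{\ell}) = F_{\ell}$ because $F_{\ell} \le F$ [Theorem \ref{degree:one:result:1}(1)], and then $F_0 = (F_{\ell})_u = F_{\ell}$ by idempotence on non-decreasing maps [Theorem \ref{degree:one:result:1}(3)]; at $\mu = \lambda$ the choice of $\lambda$ forces $F_{\ell} + \lambda \ge F$ pointwise, so $G_\lambda = F$ and $F_\lambda = F_u$. For (3) I would first show $\mu \mapsto G_\mu$ is $1$-Lipschitz in the sup norm via the elementary pointwise estimate $\lvert \min(a,b) - \min(a,c)\rvert \le \lvert b - c \rvert$, and then compose with the fact that $G \mapsto G_u$ is $1$-Lipschitz [Theorem \ref{degree:one:result:1}(4)]. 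Item (4) is immediate by composing (3) with the continuity of $\rho$ on $\mathcal{L}_1'$ [Lemma \ref{degree:one:result:0}]. For the monotonicity (5), $\mu_1 \le \mu_2$ gives $G_{\mu_1} \le G_{\mu_2}$, whence $(G_{\mu_1})_u \le (G_{\mu_2})_u$ by monotonicity of the upper bound map [Theorem \ref{degree:one:result:1}(2)]; taking $\mu_2 = \lambda$ recovers the stated inequality $F_\mu \le F_\lambda = F_u$.

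The two statements about constancy intervals rest on a single observation that I would isolate first: if $G_\mu$ is constant on an open interval $J$, then $(G_\mu)_u$ is constant on $J$. This follows by writing, for $y \in J = (\alpha,\beta)$, $(G_\mu)_u(y) = \max\bigl(\sup_{z \le \alpha} G_\mu(z),\, c\bigr)$, where $c$ is the constant value of $G_\mu$ on $J$; this quantity is independent of $y \in J$. Granting this, item (7) is quick: on an interval where $F$ is constant, $F_{\ell}$ is also constant [Theorem \ref{degree:one:result:1}(6)], hence $G_\mu = \min(F, F_{\ell}+\mu)$ is constant there, and therefore so is $F_\mu$; thus $\mathrm{Const}(F) \subseteq \mathrm{Const}(F_\mu)$.

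Item (6) is where the real work lies. Applying Theorem \ref{degree:one:result:1}(5) to $G_\mu$ shows that $F_\mu(x) = (G_\mu)_u(x) = G_\mu(x)$ for every $x \notin \mathrm{Const}(F_\mu)$, so it remains to prove $G_\mu(x) = F(x)$ at such an $x$. Suppose instead that $G_\mu(x) = F_{\ell}(x) + \mu < F(x)$. Then $F_{\ell}(x) < F(x)$, so $x \in \mathrm{Const}(F_{\ell})$ by Theorem \ref{degree:one:result:1}(5); thus $F_{\ell}$ equals a constant $c$ on an open interval around $x$, and since $F(x) > c + \mu$, continuity of $F$ gives $G_\mu \equiv c + \mu$ on a (possibly smaller) open neighborhood of $x$. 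By the constancy lemma above, $x \in \mathrm{Const}(F_\mu)$, contradicting the choice of $x$. Hence $G_\mu(x) = F(x)$ and $F_\mu(x) = F(x)$, proving (6). I expect this gluing step --- passing from local constancy of $F_{\ell}$ to local constancy of $F_\mu$ through the capped map $G_\mu$ --- to be the main obstacle, since it is the only place where the interplay among $F$, $F_{\ell}$, and the upper-bound operation must be controlled simultaneously; the remaining items are essentially formal manipulations of the cited properties.
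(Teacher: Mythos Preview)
The paper does not actually prove this theorem: it is quoted verbatim as a known result from \cite{alm00,mis82} with no argument supplied, so there is no ``paper's own proof'' to compare against. Your proposal, however, is a correct and complete proof, and it follows the standard route one finds in those references: pass through the capped family $G_\mu = \min(F, F_\ell + \mu)$, read off (1)--(5) as formal consequences of the basic properties of $(\cdot)_\ell$ and $(\cdot)_u$ recorded in Theorem~\ref{degree:one:result:1} and Lemma~\ref{degree:one:result:0}, and handle (6)--(7) via the local-constancy lemma you isolated. The delicate step you flagged in (6) --- using $F_\ell(x) < F(x) \Rightarrow x \in \mathrm{Const}(F_\ell)$ to manufacture a neighborhood on which $G_\mu$ is constant, then lifting that constancy to $F_\mu$ --- is exactly the crux, and your argument there is sound.
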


\begin{definition}[\cite{alm00,mis82}]\label{Upper:lower:rotation:set}
	Let $F \in \mathcal{L}_1$. Then, define \emph{upper-rotation set} of $F$ by  $\overline{Rot(F)} = \{ \overline{\rho_F(x)} : x \in \mathbb{R}\}$; \emph{lower-rotation set} of $F$ by 
$\underline{Rot(F)} = \{ \underline{\rho_F(x)} : x \in \mathbb{R}\}$. Also, define \emph{rotation set} of $F$ by $Rot(F) = \displaystyle \{ \rho_F(x) : x \in \mathbb{R}$ such that $ \overline{\rho_F(x)} = \underline{\rho_F(x)} = \rho_F(x) \}$ are respectively the \emph{lower bound map} and \emph{upper bound map} corresponding to the \emph{degree one map} $F$. 
\end{definition}

\begin{theorem}[\cite{alm00,mis82}]\label{degree:one:result:4}
Let $F \in \mathcal{L}_1$. Then, $\overline{Rot(F)} = \underline{Rot(F)} = Rot(F) = [\rho(F_{\ell}), \rho(F{_u})]$ where $F_{\ell}(x) = \inf \{ F(y) : y \geqslant x \}$ and $F_u(x) = \sup \{ F(y) : y \leqslant x \} $
\end{theorem}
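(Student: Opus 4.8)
The plan is to prove the two inclusions $\overline{Rot(F)}\cup\underline{Rot(F)}\subseteq[\rho(F_\ell),\rho(F_u)]$ and $[\rho(F_\ell),\rho(F_u)]\subseteq Rot(F)$, and then to chain them together with the trivial containment $Rot(F)\subseteq\overline{Rot(F)}\cap\underline{Rot(F)}$, which holds because a point contributing to $Rot(F)$ has $\overline{\rho_F(x)}=\underline{\rho_F(x)}=\rho_F(x)$. This sandwich will force all three sets to coincide with the interval.

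First I would establish the upper bound. By part (1) of Theorem \ref{degree:one:result:1} we have $F_\ell\leq F\leq F_u$ pointwise, and since $F_\ell,F_u\in\mathcal{L}_1'$ are non-decreasing, an easy induction gives $F_\ell^{\,n}\leq F^{\,n}\leq F_u^{\,n}$ for every $n$. Dividing the chain $F_\ell^{\,n}(x)-x\leq F^{\,n}(x)-x\leq F_u^{\,n}(x)-x$ by $n$ and passing to $\liminf$ and $\limsup$, the outer terms converge, by Theorem \ref{non:decreasing:exists}, to $\rho(F_\ell)$ and $\rho(F_u)$ respectively, independently of $x$. Hence $\underline{\rho_F(x)},\overline{\rho_F(x)}\in[\rho(F_\ell),\rho(F_u)]$ for every $x$, which shows $\overline{Rot(F)},\underline{Rot(F)},Rot(F)\subseteq[\rho(F_\ell),\rho(F_u)]$.

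For the reverse inclusion I would use the interpolating family $F_\mu$ of Theorem \ref{degree:one:result:3}. Since $\mu\mapsto\rho(F_\mu)$ is continuous with $\rho(F_0)=\rho(F_\ell)$ and $\rho(F_\lambda)=\rho(F_u)$, and $\rho(F_\ell)\leq\rho(F_u)$ by monotonicity of $\rho$ on $\mathcal{L}_1'$, the intermediate value theorem realizes every $\theta\in[\rho(F_\ell),\rho(F_u)]$ as $\theta=\rho(F_\mu)$ for some $\mu\in[0,\lambda]$ with $F_\mu\in\mathcal{L}_1'$. If $\theta=k/n$ is rational in lowest terms, Theorem \ref{degree:one:result:2} produces a lifted cycle of $F_\mu$ disjoint from $\mathrm{Const}(F_\mu)$; by part (6) of Theorem \ref{degree:one:result:3}, $F$ and $F_\mu$ agree off $\mathrm{Const}(F_\mu)$, so this set is also a genuine lifted cycle of $F$, giving $F^{n}(z)=z+k$ and hence the exact rotation number $\theta=\rho_F(z)\in Rot(F)$.

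The hard part is the case of irrational $\theta$, where $F_\mu$ has no periodic points by Theorem \ref{non:decreasing:exists}. Here I would invoke Poincar\'e's classification of non-decreasing degree one circle maps with irrational rotation number: such a map is semi-conjugate to the rigid rotation $R_\theta$, and its unique minimal set $K$ is nowhere dense, with the open plateaus of $F_\mu$ lying in the gaps of $K$. Consequently the full $F_\mu$-orbit of any $x\in\pi^{-1}(K)$ avoids $\mathrm{Const}(F_\mu)$, so by part (6) of Theorem \ref{degree:one:result:3} one shows by induction that $F^{n}(x)=F_\mu^{\,n}(x)$ for all $n$; therefore $\rho_F(x)=\rho_{F_\mu}(x)=\theta$ exists and $\theta\in Rot(F)$. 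The same argument applied at $\mu=0$ and $\mu=\lambda$ covers the endpoints $\rho(F_\ell),\rho(F_u)$. Verifying that the orbit genuinely stays out of the \emph{open} constancy set, rather than merely meeting plateau endpoints where in any case $F=F_\mu$, is the delicate point, and is exactly where the monotone-map refinement of Denjoy--Poincar\'e theory is needed. Combining everything, $[\rho(F_\ell),\rho(F_u)]\subseteq Rot(F)\subseteq\overline{Rot(F)}\subseteq[\rho(F_\ell),\rho(F_u)]$ and likewise for $\underline{Rot(F)}$, so all three rotation sets equal $[\rho(F_\ell),\rho(F_u)]$.
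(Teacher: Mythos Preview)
The paper does not supply its own proof of this theorem: it is quoted from \cite{alm00,mis82} as a preliminary result in Section~\ref{preliminaries} and no argument is given. So there is nothing in the paper to compare your proof against.

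That said, your proof is correct and is precisely the classical argument from those references. The sandwich $F_\ell^{\,n}\le F^{\,n}\le F_u^{\,n}$ (valid because $F_\ell,F_u$ are non-decreasing, which is what makes the induction go through) gives the inclusion into $[\rho(F_\ell),\rho(F_u)]$, and the interpolating family $F_\mu$ of Theorem~\ref{degree:one:result:3} together with Theorem~\ref{degree:one:result:2} gives the reverse inclusion, first for rationals and then for irrationals via the Poincar\'e semiconjugacy. Your remark that plateau endpoints are harmless because $\mathrm{Const}(F_\mu)$ is open, so $F=F_\mu$ there by part~(6), is exactly the right way to close the irrational case.
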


\begin{cor}[\cite{alm00,mis82}]\label{degree:one:result:5}
	The endpoints of $Rot(F)$ depend continuously on $F \in \mathcal{L}_1$.
	
\end{cor}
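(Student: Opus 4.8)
The plan is to read off the endpoints of $Rot(F)$ explicitly and then realize each endpoint as a composition of maps already shown to be continuous. By Theorem \ref{degree:one:result:4} we have $Rot(F) = [\rho(F_{\ell}), \rho(F_u)]$, so the two endpoint functions are precisely $L \colon \mathcal{L}_1 \to \mathbb{R}$, $L(F) = \rho(F_{\ell})$, and $R \colon \mathcal{L}_1 \to \mathbb{R}$, $R(F) = \rho(F_u)$. It therefore suffices to prove that each of $L$ and $R$ is continuous on $\mathcal{L}_1$ with respect to the sup norm.

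First I would factor $L$ as $L = \rho \circ \Lambda$, where $\Lambda \colon \mathcal{L}_1 \to \mathcal{L}_1'$ is the assignment $F \mapsto F_{\ell}$ and $\rho \colon \mathcal{L}_1' \to \mathbb{R}$ is the rotation number function. This factorization is legitimate because, by Definition \ref{lower:upper:bound:map}, the map $F_{\ell}$ is a non-decreasing element of $\mathcal{L}_1$, i.e. $F_{\ell} \in \mathcal{L}_1'$, so $\rho(F_{\ell})$ is well defined and the domain of $\rho$ is respected. The inner map $\Lambda$ is Lipschitz continuous with constant $1$ in the sup norm by Theorem \ref{degree:one:result:1}(4), hence continuous; the outer map $\rho = \chi$ is continuous by Lemma \ref{degree:one:result:0}. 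As a composition of continuous maps, $L$ is continuous. The argument for $R = \rho \circ U$, with $U \colon F \mapsto F_u$, is identical, using the Lipschitz continuity of $F \mapsto F_u$ from Theorem \ref{degree:one:result:1}(4) together with the continuity of $\rho$ on $\mathcal{L}_1'$ from Lemma \ref{degree:one:result:0}.

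Since both endpoint functions are continuous, the endpoints of $Rot(F)$ depend continuously on $F$, which is the assertion. The only points requiring care---and they are mild bookkeeping matters that stand in for any real obstacle here---concern domains and topologies: one must confirm that $\Lambda$ and $U$ genuinely land in $\mathcal{L}_1'$ so that Lemma \ref{degree:one:result:0} is applicable, and that both $\mathcal{L}_1$ and $\mathcal{L}_1'$ carry the same sup-norm topology, so that the Lipschitz estimate of Theorem \ref{degree:one:result:1}(4) indeed yields continuity into the space on which $\rho$ is known to be continuous. There is no genuine analytic difficulty; the corollary is a formal consequence of Theorem \ref{degree:one:result:4}, Theorem \ref{degree:one:result:1}(4), and Lemma \ref{degree:one:result:0} assembled by composition.
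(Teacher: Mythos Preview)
Your proof is correct and is exactly the standard argument: the paper does not give its own proof of this corollary (it is simply cited from \cite{alm00,mis82}), but the intended derivation is precisely the composition you describe, using Theorem~\ref{degree:one:result:4} to identify the endpoints, Theorem~\ref{degree:one:result:1}(4) for the Lipschitz continuity of $F\mapsto F_\ell$ and $F\mapsto F_u$, and Lemma~\ref{degree:one:result:0} for the continuity of $\rho$ on $\mathcal{L}_1'$.
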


	\section{Unfolding numbers: Definition and Consistency}\label{first:main:section}
	
 We begin by defining \emph{unfolding numbers} in details. Consider a \emph{continuous map} $f: [0,1] \to [0,1]$ with a \emph{unique point} of \emph{absolute} \emph{maximum} $M_f$ and a \emph{unique point} of \emph{absolute minimum} $m_f$ with $f(M_f) = 1$ and $f(m_f) =0$; assume for definiteness that $M_f <m_f$. Observe that from the standpoint of  \emph{co-existence} of \emph{cycles} of \emph{interval maps},  this is the most important case. Indeed, $m_f >M_f$ implies that there exists two disjoint \emph{open intervals} whose \emph{closures contain their union in their images} (``$1$-\emph{dimensional horseshoe}"), $f$ has the \emph{entropy} at least $\ln2$, \emph{cycles} of all \emph{periods}. So, $f$ is a \emph{chaotic map}. Thus, the case $M_f <m_f$ is the only interesting case (\emph{even though our construction stated below works in the case $M_f>m_f$ too}).

 Consider the \emph{map} $g_f: [0,\frac{1}{2}] \to [0,\frac{1}{2}]$,  which is \emph{conjugate} to the map $ f :[0,1] \to [0,1]$ via the \emph{linear contraction} $\lambda: [0,1] \to [0, \frac{1}{2}]$,  defined by $\lambda(x) = \frac{x}{2} \in [0,\frac{1}{2}]$. We call $g_f$,  the \emph{miniature model map} corresponding to the map $f$.  Let $g_f(0) =c_g$,  $g_f(\frac{1}{2}) = d_g$. Then, the map $g_f$  also has a \emph{unique point} of \emph{absolute} \emph{maximum} $M_g$ and a \emph{unique point} of \emph{absolute minimum} $m_g$ with $f(M_g) = \frac{1}{2}$ and $f(m_g) =0$; also $M_g <m_g$ (See Figure \ref{drawing3}).

 Define $p_f:[0,\frac{1}{2}] \to [\frac{1}{2}, 1]$ by $p_f(x) = 1 - g_f(x)$. Clearly, $p_f(0) = 1-c_g$, $p_f(M_g) = \frac{1}{2}$, $ p_f(m_g) = 1$, $p_f(\frac{1}{2}) = 1- d_g$.  The \emph{graph} of $p_f(x)$ is obtained by \emph{reflecting vertically}, the \emph{graph} of $g_f(x)$ with the respect with the line $y = \frac{1}{2}$.

 Define $q_f:[\frac{1}{2}, 1] \to [\frac{1}{2}, 1]$ by $q_f(x) =  p_f(1-x) =1 - f(1-x)$. Clearly, $q_f(\frac{1}{2}) = 1-d_g$, $q_f(1-m_g) = 1$, $ q_f(1-M_g) = \frac{1}{2}$, $q_f(1) = 1- c_g$. The \emph{graph} of $q_f(x)$ is obtained  by \emph{reflecting horizontally}, the \emph{graph} of  $p_f(x)$ across the line $ x = \frac{1}{2}$.

 Define $r_f:[\frac{1}{2}, 1] \to [1, \frac{3}{2}]$ by $r_f(x) = 2 - q_f(x) = 1 + f(1-x)$. Clearly, $r_f(\frac{1}{2}) = 1+d_g$, $r_f(1-m_g) = 1$, $ r_f(1-M_g) = \frac{3}{2}$, $r_f(1) = 1+c_g$. 
 Thus, the \emph{graph} of  $r_f$ is obtained from the \emph{graph} of  $q_f(x)$ by \emph{reflecting vertically} across the line $y = 1$.

 We now \emph{construct} a map $F_f : [0,1] \to [1, \frac{3}{2}]$ as follows (See Figure \ref{drawing3}):

 $ F_f(x) =\begin{cases} 
 	
 	g_f(x)  &\text{if $ 0 \leqslant x \leqslant M_g $}\\ 
 	p_f(x)   &\text{if $M_g \leqslant x \leqslant \frac{1}{2}$.} \\
 	q_f(x)  &\text{if $\frac{1}{2} \leqslant x \leqslant 1-m_g $.} \\
 	r_f(x)  &\text{if $1-m_g \leqslant x \leqslant 1$.} \\

 \end{cases} $

Observe that $F_f(1) = F_f(0) + 1$.  \emph{Extend} $F_f: \mathbb{R} \to \mathbb{R}$ by  $F_f(x+1) = F_f(x) +1 $ for all $x \in \mathbb{R}$. 	We shall call the map $F_f$,  the \emph{heaved map} corresponding to the map $f$.  By construction $F_f(x)$ is a \emph{degree one map} of the real line (See Figure \ref{drawing3}).

\begin{figure}[H]
	\caption{Graph of $F_f$ shown in \emph{dotted line}}
	\centering
	\includegraphics[width=0.7 \textwidth]{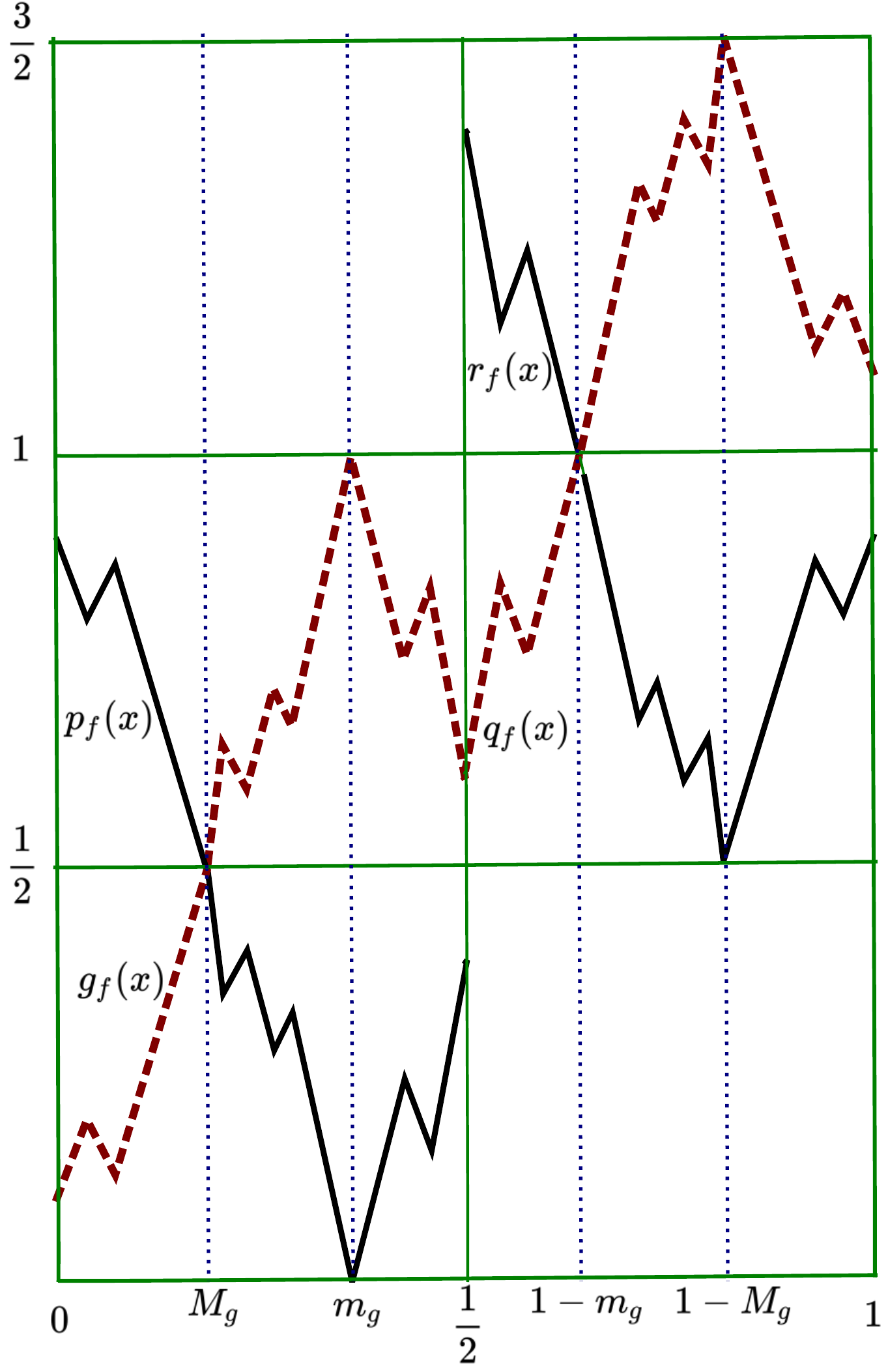}
	\label{drawing3}
\end{figure}

\begin{definition}\label{unfolding:number}
	For $x \in [0,1]$,  the \emph{upper unfolding number} of $x$ under $f: [0,1] \to [0,1]$, denoted by $\overline{u_f(x)}$, is defined as the \emph{upper rotation number} of $\lambda (x) = \frac{x}{2} \in [0,\frac{1}{2}]$ under $F_f: \mathbb{R} \to \mathbb{R}$,  that is, $\overline{u_f(x)}  = \overline{\rho_{F_f}(\lambda(x))}  = \displaystyle \lim_{n \to \infty} \mathrm{sup}  \displaystyle  \frac{ F^{n}(\lambda (x)) - \lambda (x)}{n}$.

	Similarly, the \emph{lower unfolding number} of $x \in [0,1]$ under $f: [0,1] \to [0,1]$, denoted by $\underline{u_f(x)}$, is defined as the \emph{lower rotation number} of $
	\lambda(x) = \frac{x}{2}  \in [0,\frac{1}{2}]$ under $F_f: \mathbb{R} \to \mathbb{R}$, that is, $\underline{u_f(x)}  = \underline{\rho_{F_f}(\lambda(x))} = \displaystyle \lim_{n \to \infty}  \mathrm{inf} \displaystyle  \frac{ F^{n}(\lambda (x)) - \lambda (x)}{n}$. 
	
	\smallskip
	
	Clearly, $\underline{u_f(x)}  \leqslant \overline{u_f(x)}$ for all $x \in [0,1]$. The \emph{set} $\mathcal{U}_f(x) = \{ \underline{u_f(x)}  , \overline{u_f(x)} \}$ is called the \emph{unfolding set} of  $x$ \emph{under} $f: [0,1] \to [0,1]$.  If $\underline{u_f(x)}  = \overline{u_f(x)} = u_f(x)$ for some $x \in [0,1]$, we call: $u_f(x)$, the \emph{unfolding number} of $x \in [0,1]$ \emph{under} $f: [0,1] \to [0,1]$.

\end{definition}

\begin{definition}\label{Upper:lower:unfolding:set}
	Let $f: [0,1] \to [0,1]$. Then, define \emph{upper-unfolding set} of $f$ by  $\overline{Unf(f)} = \{ \overline{u_f(x)} : x \in [0,1]\}$; \emph{lower-unfolding set} of $f$ by 
	$\underline{Unf(f)} = \{ \underline{u_f(x)} : x \in [0,1]\}$. Also, define \emph{unfolding set} of $f$ by $Unf(f) = \{ u_f(x) : x \in \mathbb{R}$ such that $ \overline{u_f(x)} = \underline{u_f(x)}= u_f(x)  \}$. 
\end{definition}

We now establish the \emph{consistency} of the theory of \emph{unfolding numbers}. We begin with a simple \emph{lemma}:

\begin{lemma}\label{pull:back}
	Given any $ z \in \mathbb{R}$, there exists $ x \in [0, \frac{1}{2}]$ such that $F_f^{n}(x) = z$ for some $n \in \mathbb{N}$. 
\end{lemma}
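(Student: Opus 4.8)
The plan is to prove the equivalent reformulation that the forward iterates of the half-interval $J=[0,\tfrac12]$ exhaust the line, i.e. $\bigcup_{n\ge 1}F_f^{\,n}(J)=\mathbb{R}$. Two structural facts drive everything. First, by construction $F_f$ is continuous and of degree one, $F_f(w+1)=F_f(w)+1$ for all $w$, so each iterate obeys $F_f^{\,n}(w-k)=F_f^{\,n}(w)-k$ for every $k\in\mathbb{Z}$; in particular $F_f$ is onto $\mathbb{R}$ by the intermediate value theorem. Second, since $J$ is an interval and $F_f$ is continuous, every $F_f^{\,n}(J)$ is again a closed interval $[L_n,R_n]$. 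Thus it suffices to control the endpoint sequences, showing $R_n\to+\infty$ and $L_n\to-\infty$ while the successive images remain linked, so that their union is connected and equal to $\mathbb{R}$.

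For the upward direction I would read off from the four defining pieces that $F_f(J)\supseteq[\tfrac12,1]$: on $[M_g,\tfrac12]$ the piece $p_f=1-g_f$ sweeps the whole of $[\tfrac12,1]$, because $g_f$ there ranges over $[0,\tfrac12]$ (attaining $\tfrac12$ at $M_g$ and $0$ at $m_g$, as $M_g<m_g\le\tfrac12$). Next, using $F_f([1-m_g,1])=r_f([1-m_g,1])=[1,\tfrac32]$ together with the translation relation, one shows that whenever an interval contains a translate $[\tfrac12,1]+j$ its $F_f$-image reaches at least up to $\tfrac32+j$. Iterating this gives $R_{n+1}\ge R_n+\tfrac12$, hence $R_n\to+\infty$; and since the per-step gain $\tfrac12$ is smaller than a full period, consecutive images overlap, so $\bigcup_n F_f^{\,n}(J)$ already contains a right half-line $[c,\infty)$.

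The genuinely delicate point, and the step I expect to be the main obstacle, is the downward direction: reaching targets $z$ below $c$, in particular all negative $z$. One cannot simply drive orbits down, because the fold that produces $F_f$ tends to reflect small values upward; the argument must instead be run through surjectivity and integer translation. The relation $F_f^{\,n}(J)\supseteq F_f(J)\supseteq[\tfrac12,1]$ shows that $J$ surjects onto a full half-period, so the shifted copies $J+k$, $k\in\mathbb{Z}$, have images covering all of $\mathbb{R}$. The plan is therefore to construct, for a given $z$, a backward orbit $z=F_f(y_1),\ y_1=F_f(y_2),\dots$, available at each stage by surjectivity, and to use the covering property together with the degree-one relation to relocate one of these preimages, after a single integer translation, into $[0,\tfrac12]$ itself.

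Concretely, the endgame is a preimage/pigeonhole argument: because $F_f$ maps each shifted half-interval $J+k$ onto an interval of length at least $\tfrac12$, one wants to guarantee that the backward chain of $z$ meets some translate of $J$, after which the translation relation $F_f^{\,n}(w-k)=F_f^{\,n}(w)-k$ converts that into an honest solution $F_f^{\,n}(x)=z$ with $x\in[0,\tfrac12]$ and $n$ equal to the length of the chain. Verifying that this relocation is always achievable — that the backward orbit genuinely hits a translate of $J$ rather than being trapped away from it — is the crux of the whole lemma; it rests entirely on the explicit four-fold structure of $F_f$ through the pieces $g_f,p_f,q_f,r_f$ and on the degree-one relation, and I would isolate it as the single technical claim on which the statement turns.
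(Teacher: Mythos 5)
There is a genuine gap, and it sits exactly where you placed it: the ``single technical claim'' you isolate for the downward direction is not merely unproven, it is false. By construction all four pieces of $F_f$ on $[0,1]$ take nonnegative values ($g_f$ maps into $[0,\tfrac12]$, $p_f$ and $q_f$ into $[\tfrac12,1]$, $r_f$ into $[1,\tfrac32]$), so $F_f([0,1])\subseteq[0,\tfrac32]$, and the degree-one relation then gives $F_f(x)\geqslant\lfloor x\rfloor\geqslant 0$ for every $x\geqslant 0$. Hence an orbit starting in $J=[0,\tfrac12]$ never becomes negative, and no backward chain from a point $z<0$ can terminate in $J$: the literal statement fails for all $z<0$ (and generically even for $z\in[0,\tfrac12)$, e.g.\ $z=0$ has no preimage in $[0,\infty)$ unless $f(0)=0$). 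Your proposed relocation step also contains a circularity worth naming: translating a preimage $y$ into $J$ by $y\mapsto y-k$ converts a solution of $F_f^{\,n}(y)=z$ into a solution of $F_f^{\,n}(y-k)=z-k$, i.e.\ it solves the problem for a \emph{translated} target, not for $z$ itself. That translated statement is precisely the true content of the lemma, and it suffices for every application in the paper (points of lifted cycles and upper/lower rotation numbers are invariant under integer translation, so one may replace $z$ by $z+k\geqslant\tfrac12$ before pulling back), but it is not what you, or the lemma, literally assert.

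For comparison, the paper's own proof is a one-line pull-back chain: it asserts that every point of the half-interval $\left[\tfrac{n}{2},\tfrac{n+1}{2}\right]$ has an $F_f$-preimage in $\left[\tfrac{n-1}{2},\tfrac{n}{2}\right]$ (the $\tfrac{\pi}{2}$'s in the printed proof are evidently typos for $\tfrac12$), which follows from $p_f(M_g)=\tfrac12$, $p_f(m_g)=1$ and $r_f(1-m_g)=1$, $r_f(1-M_g)=\tfrac32$ together with degree one; iterating pulls any $z\geqslant\tfrac12$ back into $[0,\tfrac12]$ in finitely many steps. Your upward argument ($F_f(J)\supseteq[\tfrac12,1]$, per-step gain $\tfrac12$) is the forward-image reformulation of exactly this covering property, so the correct half of your proposal coincides with the paper's route. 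The paper's proof silently has the same defect for $z<\tfrac12$ that blocked you; the honest fix in both cases is to weaken the conclusion to ``$F_f^{\,n}(x)=z+k$ for some $n\in\mathbb{N}$, $k\in\mathbb{Z}$'' (or to state the lemma for $z\geqslant\tfrac12$), which is what the subsequent theorems actually use.
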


\begin{proof}
	Construction of the map $F_f$ reveals that for any $ \ell \in \left [ \displaystyle n\frac{\pi}{2},  (n+1)\frac{\pi}{2}\right ]$,  $n \in \mathbb{Z}$, there exists $\ell' \in \left [ \displaystyle (n-1)\frac{\pi}{2},  n\frac{\pi}{2}\right ]$ such that $F_f(\ell') = \ell$. Hence, the result follows. 
\end{proof}

\begin{definition}
	Let $F \in \mathcal{L}_1$ and $y,z \in \mathbb{R}$. The \emph{trajectories} $T_F(y)$ and $T_F(z)$ are said to be \emph{coincident},  if $F^i(y) = F^i(z)$ for all $i\geqslant 0$, and \emph{eventually coincident} if $F^i(y) = F^i(z)$ for all $i \geqslant n_0$ for some $n_0 \in \mathbb{N}$.
	
\end{definition}

The following \emph{result} follows easily and its \emph{proof} is left to the reader: 

\begin{lemma}\label{eventually:coincident}
	Let $F \in \mathcal{L}_1$ and $y,z \in \mathbb{R}$.  Suppose the trajectories $T_F(y)$ and $T_F(z)$ are  coincident or eventually coincident, then the following holds:
	
	\begin{enumerate}
		\item $\overline{\rho_{F}(y)} =  \overline{\rho_{F}(z)} $ and $\underline{\rho_{F}(y)} =  \underline{\rho_{F}(z)} $. 
		
		\item if  the trajectories  $T_F(y)$ and $T_F(z)$  are lifted periodic orbits under $F$, they have the same period, same rotation pair and same modified rotation pair. 
	\end{enumerate}
	
\end{lemma}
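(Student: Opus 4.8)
The plan is to reduce both assertions to the eventually coincident case, since if $T_F(y)$ and $T_F(z)$ are coincident then taking $i=0$ in $F^i(y)=F^i(z)$ already forces $y=z$, whereupon everything is trivial. So assume $F^i(y)=F^i(z)$ for all $i\geq n_0$. The entire argument then rests on one elementary identity: for every $n\geq n_0$ we have $F^n(y)=F^n(z)$, and hence
\[
\frac{F^n(y)-y}{n}-\frac{F^n(z)-z}{n}=\frac{\bigl(F^n(y)-F^n(z)\bigr)-(y-z)}{n}=\frac{z-y}{n}.
\]

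First I would prove (1). By the identity above, the real sequences $a_n=\tfrac{F^n(y)-y}{n}$ and $b_n=\tfrac{F^n(z)-z}{n}$ satisfy $a_n-b_n=\tfrac{z-y}{n}\to 0$ as $n\to\infty$. It is a standard fact that two sequences differing by a null sequence share the same $\limsup$ and the same $\liminf$; applying this and recalling the definitions of the upper and lower rotation numbers yields $\overline{\rho_F(y)}=\overline{\rho_F(z)}$ and $\underline{\rho_F(y)}=\underline{\rho_F(z)}$ simultaneously.

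For (2), suppose in addition that $T_F(y)$ and $T_F(z)$ are lifted periodic orbits, say $F^{q_y}(y)=y+p_y$ with period $q_y$ and rotation pair $(p_y,q_y)$, and likewise $F^{q_z}(z)=z+p_z$ with pair $(p_z,q_z)$. Put $w=F^{n_0}(y)=F^{n_0}(z)$. The one genuinely delicate point is that $F$ need not be injective, so one cannot argue backwards along the trajectories; instead I would push the picture down to the circle through the natural projection $\pi:\mathbb{R}\to\uc$. Then $\pi(w)=f^{n_0}(\pi(y))$ lies in the finite periodic orbit of $\pi(y)$, whose cardinality is the period $q_y$, and $\pi(w)=f^{n_0}(\pi(z))$ lies in the orbit of $\pi(z)$, of cardinality $q_z$; since a periodic point has a unique minimal period, $q_y=q_z=:n$. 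Evaluating the two relations at the common point gives $w+p_y=F^{n}(w)=w+p_z$, so $p_y=p_z$. Hence $y$ and $z$ have the same period $n$, the same rotation pair $(p_y,q_y)=(p_z,q_z)$, and therefore the same modified rotation pair, the latter being a function of the rotation pair alone.

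I expect the only obstacle worth flagging to be the period-equality in (2): because $F$ is merely a continuous degree-one map rather than a homeomorphism, a naive preimage argument along the orbits can fail, and the clean way around it is to project to $\uc$, where forward orbits are honest finite sets and two periodic orbits sharing a point must coincide. Everything else is a routine limit computation.
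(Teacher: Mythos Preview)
Your argument is correct. The paper itself does not supply a proof of this lemma at all: it simply states that ``the following result follows easily and its proof is left to the reader,'' so there is nothing to compare your route against. Your treatment of part~(1) via the identity $\frac{F^n(y)-y}{n}-\frac{F^n(z)-z}{n}=\frac{z-y}{n}$ for $n\geq n_0$ is exactly the expected elementary computation, and your handling of part~(2) by projecting to $\uc$ to force equality of the minimal periods is the right way to sidestep the non-injectivity of $F$. One small point you might make explicit for completeness: when you write $F^{n}(w)=w+p_y$, you are using either the defining property of a lifted periodic orbit (namely $F^{q}(\cdot)=(\cdot)+p$ holds at \emph{every} point of the lifted orbit, in particular at $w\in T_F(y)$) or, equivalently, the degree-one relation $F^{n_0}(y+p_y)=F^{n_0}(y)+p_y$; either justification is immediate, but stating it removes any ambiguity.
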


\begin{theorem}\label{periodic:thm1}
	If $P$ is a periodic orbit of $f: [0,1] \to [0,1]$ of period $q$, then the unfolding number of each point of $P$ is same and is of the form $\frac{p}{q}$ for $p,q \in \mathbb{N}$. 
\end{theorem}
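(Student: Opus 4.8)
The plan is to build a folding semiconjugacy from the heaved map $F_f$ down to the miniature model $g_f$, use it to show that the $F_f$-trajectory of $\lambda(x)$ is eventually coincident with a genuine lifted periodic orbit, and then read off the unfolding number from that orbit via Lemma \ref{eventually:coincident}.

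First I would introduce the triangular folding projection $\Psi\colon\mathbb{R}\to[0,\tfrac12]$ given by $\Psi(z)=\tfrac12-\lvert (z\bmod 1)-\tfrac12\rvert$; it is the identity on $[0,\tfrac12]$, equals $z\mapsto 1-z$ on $[\tfrac12,1]$, $z\mapsto z-1$ on $[1,\tfrac32]$, and so on, and satisfies $\Psi(z+1)=\Psi(z)$. The heart of the argument is the intertwining identity $\Psi\circ F_f=g_f\circ\Psi$. Since $F_f(z+1)=F_f(z)+1$ and $\Psi$ is $1$-periodic, it is enough to verify this on the four defining pieces $[0,M_g]$, $[M_g,\tfrac12]$, $[\tfrac12,1-m_g]$, $[1-m_g,1]$ of $F_f$; on each piece the vertical and horizontal reflections used to define $p_f$, $q_f$, $r_f$ are precisely undone by $\Psi$, so the identity collapses to $g_f=g_f$. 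This computation is routine but is the one indispensable step.

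Next, the conjugacy $g_f=\lambda\circ f\circ\lambda^{-1}$ makes $v_0:=\lambda(x)=x/2\in[0,\tfrac12]$ a $g_f$-periodic point of period $q$. Iterating the intertwining identity gives $\Psi\bigl(F_f^{\,n}(v_0)\bigr)=g_f^{\,n}(v_0)$, which is $q$-periodic in $n$. Because each fibre of $\Psi$ contains at most two points per unit interval, the sequence $F_f^{\,n}(v_0)\bmod 1$ takes only finitely many values; being a forward trajectory of the induced circle map it is eventually periodic, and the integer increment gained over one such period is constant (this follows from $F_f(z+k)=F_f(z)+k$). Hence the trajectory $T_{F_f}(v_0)$ is eventually coincident with a true lifted periodic orbit $Q'$ of $F_f$, whose period $N$ satisfies $q\mid N$ and $N\mid 2q$. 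By Lemma \ref{eventually:coincident}(1) the upper and lower unfolding numbers of $x$ both equal the rotation number $p/N$ of $Q'$; in particular $u_f(x)$ exists and is rational.

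It remains to show that every point of $P$ carries this same value, and here lies the main obstacle. The unfolding number of $f^i(x)$ is $\rho_{F_f}(v_i)$ with $v_i=g_f^{\,i}(v_0)\in[0,\tfrac12]$, while forward-invariance of the rotation number gives $\rho_{F_f}\bigl(F_f^{\,i}(v_0)\bigr)=\rho_{F_f}(v_0)$. Since $\Psi\bigl(F_f^{\,i}(v_0)\bigr)=v_i$, the point $F_f^{\,i}(v_0)$ equals $v_i$ or $-v_i$ modulo an integer. In the first case we are done instantly; the delicate case is the second, where $v_i$ and its companion preimage $-v_i$ might a priori have different rotation numbers. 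I expect to settle this by a fold-analysis of $F_f$, tracking which of the four pieces each iterate visits: whenever some iterate $v_j$ lies in $[0,M_g)$ or in $(m_g,\tfrac12]$, its two $\Psi$-preimages share the same $F_f$-image modulo $1$, so the two strands over the $g_f$-orbit of $v_0$ merge into a single cycle of period $q$, forcing $\rho_{F_f}(v_i)=\rho_{F_f}(-v_i)$; otherwise the entire orbit remains in the central piece $(M_g,m_g)$, where the local symmetry $F_f(-z)=1-F_f(z)$ makes the iterates alternate between $[M_g,\tfrac12]$ and $[1-m_g,1]$ and pins the rotation number to $\tfrac12$ for every point of $P$. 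In either case all the $\rho_{F_f}(v_i)$ agree, which finishes the proof. The fold bookkeeping in this last paragraph is where the genuine work lies; once the intertwining identity is established the remaining steps are formal.
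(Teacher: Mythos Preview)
The paper's proof is one short paragraph: it asserts, directly from the construction, that $T_{F_f}(\lambda(x))$ is a lifted periodic orbit of $F_f$ of period exactly $q$, and then reads off the rotation number $p/q$. There is no folding map, no eventual-coincidence argument, and no separate treatment of distinct points of $P$; all of that is absorbed into the single claim that the whole orbit lifts to one period-$q$ cycle.

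Your route via the explicit semiconjugacy $\Psi\circ F_f=g_f\circ\Psi$ is genuinely different and more scrupulous. The intertwining identity makes the paper's ``by construction'' precise, and your observation that the lifted period need only divide $2q$ is correct---indeed it exposes a soft spot in the paper's bare assertion: if the $g_f$-orbit lies entirely in $(M_g,m_g)$ and $q$ is odd, the lifted period is $2q$, and even outside that case $\lambda(x)$ can be merely pre-periodic under $F_f$ rather than lifted periodic. One correction: the symmetry you quote should be $F_f(1-z)=2-F_f(z)$ on the central piece, not $F_f(-z)=1-F_f(z)$; your conclusions (rotation number $\tfrac12$ in that case, and equal values on all of $P$ in general) are unaffected. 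Note, however, that in the central-piece case with odd $q$ the resulting unfolding number $\tfrac12$ cannot be written as $p/q$, so strictly speaking neither your argument nor the paper's delivers the stated form in complete generality; what your merging argument does secure is eventual period $q$, hence the $p/q$ form, whenever the orbit meets $[0,M_g]\cup[m_g,\tfrac12]$.
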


\begin{proof}

	Let $x \in P$. Then, $\lambda(x) \in [0, \frac{1}{2}]$ is \emph{periodic} under $g_f : [0,\frac{1}{2}] \to [0,\frac{1}{2}] $ with same \emph{period} $q$. Construction of the map $F_f: \mathbb{R} \to \mathbb{R}$,  then implies  that, the \emph{trajectory} $T_{F_f}(\lambda(x))$  of $\lambda(x)$ under $F_f$ is a \emph{lifted periodic orbit} of $F_f$ of the same \emph{period} $q$. Suppose the \emph{rotation number} of $\lambda(x)$ under $F_f$ is $\frac{p}{q}$, then,   definition \ref{unfolding:number},  implies that, the \emph{unfolding number} of $x$ under $f$ is $\frac{p}{q}$, and hence the result follows. 
\end{proof}

\begin{definition}\label{modified:unfolding:pair}
	We call $\frac{p}{q}$,  $p,q \in \mathbb{N}$ (from Theorem \ref{periodic:thm1}),  the \emph{unfolding number} of the \emph{periodic orbit} $P=T_f(x)$ of $f$ and the \emph{ordered pair} $(p, q)$, the \emph{unfolding pair} of $P$; we write:  $un(P) = \frac{p}{q}$ and $up(P) = (p,q)$.  If $ \frac{p}{q} = \frac{r}{s}$,  where $r$ and $s$ are \emph{coprime} and $ \frac{p}{r} = \frac{q}{s} = m \in \mathbb{N} $,  then we call the \emph{ordered pair} $(\frac{r}{s}, m)$, the \emph{modified unfolding pair} of $P$ (\emph{similar to rotation theory for degree one circle maps, see Section} \ref{introduction}); we write $mup(P) = (\frac{r}{s}, m)$. Let $mup(f)$ be the set of all \emph{modified unfolding pairs} of \emph{cycles} of $f$. 
	
\end{definition}

\begin{theorem}\label{conjugacy:thm:3}
	The	set of modified unfolding pairs of cycles of the interval map $f: [0,1] \to [0,1]$ is equal to the set of modified rotation pairs of lifted cycles of the degree one circle map $F_f: \mathbb{R} \to \mathbb{R}$, that is, $mup(f) = mrp(F_f)$. 
	
\end{theorem}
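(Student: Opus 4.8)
The plan is to read both sides through the \emph{folding projection} and reduce everything to the semiconjugacy between the heaved map and the miniature model. First I would introduce the map $\psi:\R\to[0,\tfrac12]$ that folds the line back onto $[0,\tfrac12]$ across the successive fold lines used in building $F_f$; concretely $\psi$ is the continuous, period-one ``tent'' map with $\psi|_{[0,1/2]}=\mathrm{id}$, $\psi(x)=1-x$ on $[\tfrac12,1]$, and $\psi(x+1)=\psi(x)$. Checking the four defining pieces $g_f,p_f,q_f,r_f$ one at a time, I would verify the identity $\psi\circ F_f=g_f\circ\psi$ on $[0,1]$ and extend it to $\R$ using degree one; this says $\psi$ semiconjugates $F_f$ to $g_f$ and is the backbone of the whole argument. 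I would also record the elementary fact that $\lambda$ conjugates $f$ to $g_f$, so $x\mapsto\lambda(x)=\tfrac x2$ is a bijection between cycles of $f$ and cycles of $g_f$.

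The inclusion $mup(f)\subseteq mrp(F_f)$ is then immediate: given a cycle $P$ of $f$ and $x\in P$, Theorem \ref{periodic:thm1} already produces the lifted cycle $T_{F_f}(\lambda(x))$ of $F_f$ whose rotation data is, by Definition \ref{modified:unfolding:pair}, exactly $mup(P)$; hence $mup(P)\in mrp(F_f)$. For the reverse inclusion $mrp(F_f)\subseteq mup(f)$ I would start from an arbitrary lifted cycle $Q'$ of $F_f$ with $\rho(Q')=\tfrac pq$, pick $z\in Q'$, and fold it down to $w=\psi(z)\in[0,\tfrac12]$. Applying the iterated identity $\psi\circ F_f^{q}=g_f^{q}\circ\psi$ to $F_f^{q}(z)=z+p$ gives $g_f^{q}(w)=w$, so $w$ is $g_f$-periodic and $2w$ is an $f$-periodic point; set $P=T_f(2w)$. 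By Definition \ref{unfolding:number} one has $un(P)=\rho_{F_f}(w)$, so the inclusion comes down to showing that folding does not disturb the rotation data, i.e. $\rho_{F_f}(\psi(z))=\rho_{F_f}(z)$ together with equality of the (modified) rotation pairs.

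This last point is the crux, and I expect it to be the main obstacle. I would isolate it as a lemma: for every $y\in\R$ the trajectories $T_{F_f}(y)$ and $T_{F_f}(\psi(y))$ become coincident after translating one of them by a suitable integer. Since $\psi(y)$ differs from $y$ either by an integer translation or by the fold-reflection $x\mapsto 1-x$, the translation case is trivial (rotation numbers are translation invariant), while in the reflection case a short case analysis over the four pieces $g_f,p_f,q_f,r_f$ should show that the two forward orbits collide, up to a $+1$ translation, within at most two iterates: the orientation reversal introduced at one fold line is cancelled at the next turning point of $F_f$, so the two sheets of $\psi$ are glued together along every orbit.

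Granting the lemma, Lemma \ref{eventually:coincident} (applied after the integer translation, using translation invariance of $\rho_{F_f}$) yields $\rho_{F_f}(w)=\rho_{F_f}(z)=\tfrac pq$ and equality of modified rotation pairs, so $mup(P)=mrp(Q')$ and therefore $mrp(Q')\in mup(f)$; combining the two inclusions gives $mup(f)=mrp(F_f)$. The only delicate bookkeeping is organizing the reflection-case analysis so that every combination of pieces is covered and the ``at most two iterates'' claim is uniform (equivalently, that no lifted cycle of $F_f$ is trapped on the reflected sheet with rotation behaviour different from that of its $\psi$-projection); everything else is formal.
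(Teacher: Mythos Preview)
Your approach is genuinely different from the paper's. The semiconjugacy $\psi\circ F_f=g_f\circ\psi$ is correct and gives a clean reading of the forward inclusion $mup(f)\subseteq mrp(F_f)$, which goes exactly as you and the paper both say. For the reverse inclusion, however, the paper does \emph{not} fold $z$ down and compare $T_{F_f}(z)$ with $T_{F_f}(\psi(z))$. It works \emph{backwards}: Lemma~\ref{pull:back} is invoked to produce $x\in[0,\tfrac12]$ and $n\in\mathbb N$ with $F_f^{\,n}(x)=z$, so that $T_{F_f}(z)$ is literally a tail of $T_{F_f}(x)$ and Lemma~\ref{eventually:coincident} applies with no case analysis at all; $x$ is then read as a $g_f$-periodic point and transported to an $f$-cycle via $\lambda^{-1}$. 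This sidesteps the whole question of how the two $F_f$-orbits sitting over a single $g_f$-orbit are related.

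Your crux lemma, which you rightly flag as the delicate point, is in fact \emph{false} as stated. Take any $g_f$-cycle $\{w_0,w_1\}$ of period two lying entirely in $(M_g,m_g)$ (equivalently, an $f$ two-cycle inside $(M_f,m_f)$, which the standing hypotheses on $f$ certainly allow). Since each $w_i$ falls in the $p_f$-piece of $F_f$ and each $1-w_i$ in the $r_f$-piece, one computes
\[
T_{F_f}(w_0)=\{w_0,\ 1-w_1,\ 1+w_0,\ 2-w_1,\dots\},\qquad
T_{F_f}(1-w_0)=\{1-w_0,\ 1+w_1,\ 2-w_0,\dots\},
\]
which modulo $\mathbb Z$ are the \emph{disjoint} circle orbits $\{w_0,1-w_1\}$ and $\{1-w_0,w_1\}$; no integer shift of one ever meets the other, at any iterate. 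The ``collide within at most two iterates'' heuristic breaks precisely because, whenever the underlying $g_f$-orbit stays in $(M_g,m_g)$, the fold-reflection is renewed at each step rather than cancelled at the next turning point. (Both lifted orbits here do carry the same modified rotation pair $(\tfrac12,1)$, but not via eventual coincidence.) So to make the $\psi$-route go through you would need a separate argument that any two lifted $F_f$-cycles with the same $\psi$-projection have the same modified rotation pair; the paper's backward pull-back is exactly a device for avoiding that comparison.
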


\begin{proof}
	Let $P$ be a \emph{cycle} of $f: [0,1] \to [0,1]$ with \emph{period} $mq$ with \emph{unfolding number} $\frac{p}{q}$, with $p,q$ \emph{co-prime} and \emph{modified unfolding pair} $(\frac{p}{q}, m)$ for some $m \in \mathbb{N}$. Let $x \in P$. Then, $\lambda(x) \in [0, \frac{1}{2}]$ is a \emph{periodic point} of $g_f : [0,\frac{1}{2}] \to [0,\frac{1}{2}] $ of same \emph{period} $mq$. Construction of the map $F_f: \mathbb{R} \to \mathbb{R}$,  then stipulates that the \emph{trajectory} $T_{F_f}(\lambda(x))$  of $\lambda(x)$ under $F_f$ is a \emph{lifted periodic orbit} of $F_f$.  Moreover, definitions \ref{unfolding:number} and  \ref{modified:unfolding:pair}  entails that  \emph{modified rotation pair} of $T_{F_f}(\lambda(x))$   is $(\frac{p}{q}, m)$.  Hence, $(\frac{p}{q}, m) \in mrp(F_f)$. 
	
	Conversely, suppose $(\frac{p}{q}, m)$ be the \emph{modified rotation pair} of a \emph{lifted cycle} $P'$ of $F_f$. Choose $z \in P'$. By Lemma \ref{pull:back}, there exists $ x \in [0, \frac{1}{2}]$ and $n \in \mathbb{N}$ such that $F_f^{n}(x) = z$. Since, the \emph{trajectories} $T_{F_f}(z)$  and  $T_{F_f}(x)$ are \emph{eventually coincident}, by Lemma \ref{eventually:coincident}, the \emph{modified rotation pair} of $T_{F_f}(x)$ is also $(\frac{p}{q}, m)$.  Connection between the \emph{maps} $g_f: [0, \frac{1}{2}] \to [0, \frac{1}{2}]$ and $F_f: \mathbb{R} \to \mathbb{R}$, warrants that $x\in [0, \frac{1}{2}]$ is a  \emph{periodic point} of $g_f$ of \emph{period} $mq$ and hence,  $\lambda^{-1}(x) = y \in [0,1]$ is a \emph{periodic point} of $f$ of the same \emph{period} $mq$. Definitions \ref{unfolding:number} and \ref{modified:unfolding:pair},  now yields that, the  \emph{modified unfolding pair} of  $P= T_f(y)$  is  equal to $(\frac{p}{q}, m)$, and hence $(\frac{p}{q}, m) \in mup(f)$ establishing the reverse inclusion.
\end{proof}

\begin{theorem}\label{completeness:1}
	There exists $t_1, t_2 \in \mathbb{R}$ and $m_1, m_2 \in \mathbb{N}$ such that $mup(f)= [(t_1, m_1), (t_2, m_2)]$. 
\end{theorem}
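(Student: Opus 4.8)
The plan is to reduce Theorem~\ref{completeness:1} to the structure theorem for degree one circle maps, Theorem~\ref{circle:maps}, by way of the identity $mup(f)=mrp(F_f)$ already established in Theorem~\ref{conjugacy:thm:3}. Once that identity is invoked, the entire statement becomes a statement about the set $mrp(F_f)$, and the only thing that has to be checked by hand is that the heaved map $F_f$ genuinely belongs to $\mathcal{L}_1$, since Theorem~\ref{circle:maps} is stated precisely for elements of $\mathcal{L}_1$.

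First I would verify that $F_f\in\mathcal{L}_1$ directly from the construction in this section. The four defining pieces $g_f$, $p_f$, $q_f$, $r_f$ match at the junction points $M_g$, $\frac12$ and $1-m_g$: one has $g_f(M_g)=\frac12=p_f(M_g)$, then $p_f(\frac12)=1-d_g=q_f(\frac12)$, and finally $q_f(1-m_g)=1=r_f(1-m_g)$, each equality holding because the next piece is obtained from the previous one by a reflection fixing the shared boundary value. Hence $F_f$ is continuous on $[0,1]$, and the relation $F_f(1)=1+c_g=F_f(0)+1$ together with the extension $F_f(x+1)=F_f(x)+1$ exhibits $F_f$ as a continuous degree one self-map of the line, i.e.\ a lifting of a continuous degree one self-map of $\uc$. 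Thus $F_f\in\mathcal{L}_1$.

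With this in place, I would apply Theorem~\ref{circle:maps} to $F=F_f$ to obtain elements $(t_1,m_1)$ and $(t_2,m_2)$ of $\mathbb{M}$ with $mrp(F_f)=[(t_1,m_1),(t_2,m_2)]$, and then substitute using Theorem~\ref{conjugacy:thm:3} to conclude $mup(f)=mrp(F_f)=[(t_1,m_1),(t_2,m_2)]$, which is exactly the claim.

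The one place where care is needed, and what I expect to be the only genuine obstacle, is reconciling the endpoint multiplicities with the requirement $m_1,m_2\in\mathbb{N}$: Theorem~\ref{circle:maps} only guarantees $(t_i,m_i)\in\mathbb{M}$, so a priori $m_i$ could be $2^\infty$, or $t_i$ could be irrational (in which case the prong is degenerate with $m_i=0$). To pin down $m_i\in\mathbb{N}$ one must argue that the extremal modified rotation pairs of $F_f$ are actually attained by lifted cycles. When the extremal value $t_i$ is rational this can be extracted from the monotone-envelope machinery: by Theorem~\ref{degree:one:result:4} the endpoints of $Rot(F_f)$ equal $\rho(F_{f,\ell})$ and $\rho(F_{f,u})$, and Theorem~\ref{degree:one:result:2} then produces an honest cycle (disjoint from the constancy set, hence a cycle of $F_f$ itself) realizing that rational rotation number, forcing $m_i\in\mathbb{N}$. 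I would therefore carry out the reduction with $(t_i,m_i)\in\mathbb{M}$ and then specialise to $m_i\in\mathbb{N}$ precisely in the cases where the endpoint is realized by a cycle, treating the irrational-endpoint case as the corresponding degenerate element of $\mathbb{M}$.
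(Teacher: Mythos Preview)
Your approach is essentially identical to the paper's: both proofs simply apply Theorem~\ref{circle:maps} to $F_f$ and then invoke Theorem~\ref{conjugacy:thm:3} to transfer the result to $mup(f)$. Your additional verification that $F_f\in\mathcal{L}_1$ and your care about whether the endpoint multiplicities $m_i$ actually lie in $\mathbb{N}$ go beyond what the paper does---the paper's proof is two sentences and simply asserts $m_1,m_2\in\mathbb{N}$ directly from Theorem~\ref{circle:maps} without addressing the $2^\infty$ or irrational-endpoint possibilities.
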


\begin{proof}
	
	By Theorem \ref{circle:maps}, there exists \emph{modified rotation pairs} $(t_1, m_1)$ and $(t_2, m_2)$ where $t_1, t_2 \in \mathbb{R}$ and $m_1, m_2 \in \mathbb{N}$ such that the  set of all \emph{modified rotation pairs} of \emph{cycles} of the \emph{degree one map} $F_f : \mathbb{R} \to \mathbb{R}$ is given by $mrp(F_f) = [(t_1, m_1), (t_2, m_2)]$. Now, the result  follows from Theorem \ref{conjugacy:thm:3}.
\end{proof}

Theorems \ref{periodic:thm1} and \ref{completeness:1} guarantees the \emph{completeness} of the \emph{unfolding numbers}.

\section{Unfolding Interval and its realization}\label{unfolding:interval:section}

In this section, we analyze the \emph{structure} of the \emph{set} of \emph{unfolding numbers} of \emph{points} under a given \emph{interval map} $f:[0,1] \to [0,1]$.

\begin{theorem}\label{unfolding:set}\label{existence:unfolding:interval}
	Let $f: [0,1] \to [0,1]$ be a given continuous interval map. Then, 
	
$\overline{Unf(f)} = \underline{Unf(f)} = Unf(f) =[u_f, \frac{1}{2}]$, where $u_f = \rho((F_f)_{\ell})$ where ${(F_f)}_{\ell}$ represents the lower bound map corresponding to the degree one map $F_f$.

\end{theorem}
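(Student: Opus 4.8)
The plan is to transfer the entire question to the heaved map $F_f$ and then invoke the degree-one rotation machinery of Section~\ref{subsec:results:degree:one:maps}. First I would unwind the definitions: by Definition~\ref{unfolding:number} and Definition~\ref{Upper:lower:unfolding:set}, the sets $\overline{Unf(f)}$, $\underline{Unf(f)}$, $Unf(f)$ are exactly the collections of upper, lower, and two-sided rotation numbers of the base points $\lambda(x)=x/2$, $x\in[0,1]$, i.e. of the points of $[0,\frac12]$, under $F_f$. The first claim is that restricting the base point to $[0,\frac12]$ loses nothing. Given any $z\in\mathbb{R}$, Lemma~\ref{pull:back} produces $w\in[0,\frac12]$ and $n\in\mathbb{N}$ with $F_f^{\,n}(w)=z$, so that $T_{F_f}(w)$ and $T_{F_f}(z)$ are eventually coincident; Lemma~\ref{eventually:coincident} then gives $\overline{\rho_{F_f}(w)}=\overline{\rho_{F_f}(z)}$ and $\underline{\rho_{F_f}(w)}=\underline{\rho_{F_f}(z)}$ (and in particular the two-sided number of $w$ exists exactly when that of $z$ does, with the same value). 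Hence the rotation numbers realised by points of $[0,\frac12]$ are precisely those realised by all of $\mathbb{R}$, which yields
$$\overline{Unf(f)}=\overline{Rot(F_f)},\qquad \underline{Unf(f)}=\underline{Rot(F_f)},\qquad Unf(f)=Rot(F_f).$$

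Next I would apply Theorem~\ref{degree:one:result:4} to $F_f\in\mathcal{L}_1$, which immediately gives
$$\overline{Rot(F_f)}=\underline{Rot(F_f)}=Rot(F_f)=[\rho((F_f)_{\ell}),\rho((F_f)_u)].$$
The left endpoint is $u_f=\rho((F_f)_{\ell})$ by the very statement of the theorem, so the whole proof reduces to the single computation $\rho((F_f)_u)=\frac12$.

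This last computation is where the real work lies and is the step I expect to be the main obstacle. The upper bound map $(F_f)_u(x)=\sup\{F_f(y):y\le x\}$ (Definition~\ref{lower:upper:bound:map}) is non-decreasing and degree one, since $(F_f)_u(x+1)=(F_f)_u(x)+1$; hence by Theorem~\ref{non:decreasing:exists} its rotation number exists and is independent of the base point, and it suffices to follow one orbit. Reading the peak heights off the explicit construction of $F_f$, I would record the two facts $\max_{[0,1/2]}F_f=1$, attained at $m_g$ because $p_f(m_g)=1-g_f(m_g)=1$, and $\max_{[0,1]}F_f=\frac32$, attained at $1-M_g$ because $r_f(1-M_g)=1+g_f(M_g)=\frac32$. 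Combined with the periodicity $F_f(x+1)=F_f(x)+1$, these give $(F_f)_u(1)=\frac32$ and $(F_f)_u(\frac32)=1+\max_{[0,1/2]}F_f=2$, whence by periodicity the orbit of $1$ alternates as $1\mapsto\frac32\mapsto 2\mapsto\frac52\mapsto\cdots$, advancing by $\frac12$ at each step, so that $(F_f)_u^{\,n}(1)=1+\frac n2$. Therefore
$$\rho((F_f)_u)=\lim_{n\to\infty}\frac{(F_f)_u^{\,n}(1)-1}{n}=\frac12,$$
and combining the three displays yields $\overline{Unf(f)}=\underline{Unf(f)}=Unf(f)=[u_f,\frac12]$.

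The delicate point is the final paragraph: one must verify that the running maximum of the folded graph genuinely advances by exactly $\frac12$ at each step — equivalently that the successive peaks of $F_f$ occur at the heights $\ldots,1,\frac32,2,\frac52,\ldots$ — and this rests squarely on the symmetry engineered into the three folds together with the normalisations $g_f(M_g)=\frac12$ and $g_f(m_g)=0$. Everything else is a routine application of the degree-one results already quoted.
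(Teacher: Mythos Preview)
Your proposal is correct and follows essentially the same route as the paper: reduce the unfolding sets to the rotation sets of $F_f$ via Lemma~\ref{pull:back} and Lemma~\ref{eventually:coincident}, invoke Theorem~\ref{degree:one:result:4}, and then compute $\rho((F_f)_u)=\tfrac12$ by following a single orbit. The only cosmetic difference is that the paper tracks the orbit of $0$ (using $(F_f)_u(0)=\tfrac12$ and $(F_f)_u(\tfrac12)=1$, obtained from a piecewise description of $(F_f)_u$ on $[0,1-m_g]$), whereas you track the orbit of $1$ via the peak heights $\max_{[0,1/2]}F_f=1$ and $\max_{[0,1]}F_f=\tfrac32$; these are the same computation up to an integer translation.
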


\begin{proof}
	We begin by showing: $\underline{Unf(f)} = \underline{Rot(F_f)}$. Choose $ x \in [0,1]$ and $\underline{u_f(x)} \in \underline{Unf(f)}$. Then, $\lambda(x) \in [0, \frac{1}{2}]$ and hence,  by definition \ref{unfolding:number}, $\underline{u_f(x)} = \underline{\rho_{F_f}(\lambda(x))} \in \underline{Rot(F_f)}$,  and hence $\underline{Unf(f)} \subseteq \underline{Rot(F_f)}$.  Now,  take $ y \in \mathbb{R}$ and $\underline{\rho_{F_f}(y)} \in  \underline{Rot(F_f)}$. By Lemma \ref{pull:back}, there exists  a point $ z \in [0, \frac{1}{2}]$ such that $F_f^{n}(z) = y$ for some $ n \in \mathbb{N}$. Then, the \emph{trajectories} $T_{F_f}(z)$ and $T_{F_f}(y)$ are \emph{eventually coinciding} and hence, by Lemma \ref{eventually:coincident}:  $\underline{\rho_{F_f}(y)} =  \underline{\rho_{F_f}(z)} $.  Let $\lambda^{-1}(z) = x \in [0,1]$. Then, by Definition \ref{unfolding:number},   $ \underline{\rho_{F_f}(y)} = \underline{\rho_{F_f}(z)} = \underline{u_f(x)} \in \underline{Unf(f)} $  implying the reverse inclusion.

Similarly, it can be shown that: $\overline{Unf(f)} = \overline{Rot(F_f)}$. Now,  Theorem  \ref{degree:one:result:4} and Definition \ref{Upper:lower:unfolding:set},  yields that: $\overline{Unf(f)} = \underline{Unf(f)} = Unf(f) = [\rho({(F_f)}_{\ell}), \rho({(F_f)}_u)]$, where ${(F_f)}_{\ell}$ and $(F_f)_u$ are respectively the \emph{lower bound map} and \emph{upper bound map} corresponding to the \emph{degree one map} $F_f$.  It remains to show $\rho((F_f)_{u}) = \frac{1}{2}$.

It is easy to see that $(F_f)_u(x) = \frac{1}{2}$ in $[0,M_g]$, $(F_f)_u (x) = F_f (x) $ in $[M_g,m_g]$ and $(F_f)_u (x) = 1 $ in the interval $ [m_g, 1-m_g]$. It follows that $(F_f)_u(0) = \frac{1}{2}$ and $(F_f)_u(\frac{1}{2})  = 1$ and hence the \emph{trajectory}: $ T_{(F_f)_u}(0)$ of $0$ under the map $(F_f)_u $,   has \emph{rotation number} $\frac{1}{2}$.  Hence, the result follows.  
\end{proof}

\begin{definition}
	The \emph{interval} $Unf(f) = [u_f, \frac{1}{2}]$ from Theorem \ref{existence:unfolding:interval} is called the \emph{unfolding interval} of the interval \emph{map} $f: [0,1] \to [0,1]$. 
\end{definition}

\begin{definition}
	Given a map $f: [0,1] \to [0,1]$, a \emph{set} $A \subset [0,1]$ and a \emph{point} $ y \in [0,1]$, we define:
	
	\begin{enumerate}
		\item $ \displaystyle \eta_A(y) = \mathrm{sup} \{ x \in A : f(x) = f(y)\}$ 
		\item $\xi_A(y) = \mathrm{inf} \{ x \in A : f(x) = f(y)\}$
		
		\item $Y_f =  \eta_{[0,M_f]}([0,M_f]) $ $ \cup \eta_{[0,1]}([M_f,m_f]) $ $ \cup \xi_{[0,1]}([M_f,m_f]) \cup $ $  \eta_{[m_f,1]}([m_f,1]) $. 
	\end{enumerate}

\end{definition}

Let us denote by $\mathcal{C}$, the set of \emph{open subsets} of $\mathbb{R}$ where the \emph{lower bound function}:    $(F_f)_{\ell} (x) $ is \emph{constant} and let $\mathcal{U}$ to be the \emph{union} of \emph{open intervals} from $\mathcal{C}$.

\begin{theorem}\label{unfolding:number:measure:prelim}
	Let $f : [0,1] \to [0, 1]$ be a continuous map with unfolding interval $[u_f, \frac{1}{2}]$.
	Then for any point $ y \in [0,1]$ whose trajectory is contained in $Y_f$, the unfolding set,  $\mathcal{U}_f(y) = \{ u_f \}$. 
	
\end{theorem}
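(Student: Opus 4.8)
The plan is to push everything through to the heaved map $F_f$ and show that the $F_f$-orbit of $\lambda(y)$ is trapped in the ``dry land'' of the lower bound map $(F_f)_{\ell}$, i.e.\ in the set where $F_f$ and $(F_f)_{\ell}$ agree. On such an orbit the rotation number is forced to equal $\rho((F_f)_{\ell}) = u_f$, and it is realized as a genuine two-sided limit rather than merely as a $\limsup/\liminf$; this is exactly the assertion $\mathcal{U}_f(y) = \{u_f\}$. Concretely, set $z_0 = \lambda(y)$ and $z_n = F_f^n(z_0)$. The reduction step is: \emph{if} $z_n \notin \mathcal{U} = \mathrm{Const}((F_f)_{\ell})$ for every $n \geqslant 0$, then by the contrapositive of Theorem \ref{degree:one:result:1}(5) we have $F_f(z_n) = (F_f)_{\ell}(z_n)$ for all $n$, and an immediate induction shows the $F_f$-orbit and the $(F_f)_{\ell}$-orbit of $z_0$ coincide. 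Since $(F_f)_{\ell} \in \mathcal{L}_1'$, Theorem \ref{non:decreasing:exists} guarantees that $\rho_{(F_f)_{\ell}}(z_0)$ exists and equals $\rho((F_f)_{\ell}) = u_f$; hence $\lim_{n} \frac{F_f^n(z_0)-z_0}{n} = \lim_{n} \frac{(F_f)_{\ell}^n(z_0)-z_0}{n} = u_f$ exists, so by Definition \ref{unfolding:number} $\overline{u_f(y)} = \underline{u_f(y)} = u_f$ and $\mathcal{U}_f(y) = \{u_f\}$. This half is routine once the trapping is established.

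The heart of the argument, and the step I expect to be the main obstacle, is the trapping claim $z_n \notin \mathcal{U}$ for all $n$. I would exploit the folding semiconjugacy built into the construction of $F_f$: letting $\phi:\mathbb{R} \to [0,\tfrac12]$ be the triangle wave that is the identity on $[0,\tfrac12]$, equals $x \mapsto 1-x$ on $[\tfrac12,1]$, and satisfies $\phi(x+1)=\phi(x)$, one checks directly from the piecewise definition of $F_f$ through $g_f,p_f,q_f,r_f$ that $\phi \circ F_f = g_f \circ \phi$. Consequently $\phi(z_n) = g_f^n(z_0) = \lambda(f^n(y))$, so the projected orbit $\{\phi(z_n)\}$ lies in $\lambda(Y_f)$ by hypothesis. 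I would then describe $\mathcal{U}$ explicitly: the constancy intervals of $(F_f)_{\ell}$ are the submerged arcs of the graph of $F_f$, and, working modulo $1$, their endpoints are controlled by the lowest local minima of $F_f$ over a period, whose heights are $c_g = g_f(0)$ and $1 - d_g$, $d_g = g_f(\tfrac12)$.

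This pins down a dichotomy for a point $z$ with $\phi(z)=w$: whether $z \in \mathcal{U}$ depends on $w$ and on which ``sheet'' $z$ occupies (\emph{ascending}, $z \equiv w$, or \emph{descending}, $z \equiv 1-w \pmod 1$). The dry conditions read $w \leqslant a$ on ascending sheets and $w \geqslant 1-b$ on descending sheets, where $g_f(a) = d_g$ and $g_f(1-b) = c_g$ on the branch $[M_g,m_g]$. I would then run the case analysis matching the four pieces of $Y_f$ to the four pieces $g_f,p_f,q_f,r_f$ and to the two sheets: the point is that the extremal-preimage prescriptions defining $Y_f$ are exactly the combinatorial conditions placing the corresponding orbit point on the un-submerged part of the graph. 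A rightmost preimage (an $\eta$) sits at the right end of a level set, so water poured from the right cannot lower the graph there; a leftmost preimage (a $\xi$) covers the complementary descending sheet. The sheet of $z_n$ is tracked through the transition rules dictated by the position of $\phi(z_{n-1})$ relative to $M_g$ and $m_g$.

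The genuinely technical core is verifying dryness in each of these cases simultaneously with the sheet bookkeeping, since $f$ may have arbitrarily high modality and so $g_f$ need not be monotone on $[0,M_g]$, $[M_g,m_g]$, or $[m_g,\tfrac12]$; the $\eta/\xi$ definitions in $Y_f$ are precisely what make the analysis go through at that level of generality, where $\mathcal{U}$ may consist of several submerged arcs per period rather than the two of the model case. Once every orbit point is shown to avoid $\mathcal{U}$, the reduction of the first paragraph completes the proof.
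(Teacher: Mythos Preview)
Your proposal is correct and follows the same route as the paper: show the lifted $F_f$-orbit of $\lambda(y)$ avoids $\mathcal{U}=\mathrm{Const}((F_f)_{\ell})$, invoke Theorem \ref{degree:one:result:1}(5) to identify that orbit with the $(F_f)_{\ell}$-orbit, and conclude via Theorem \ref{non:decreasing:exists} that the rotation number exists and equals $\rho((F_f)_{\ell})=u_f$. The paper's proof is considerably terser---it simply asserts the trapping claim as a consequence of ``the construction of $g_f$ and $F_f$''---whereas you actually supply the folding semiconjugacy $\phi\circ F_f=g_f\circ\phi$ and the sheet/$\eta$-$\xi$ bookkeeping that make that assertion precise.
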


\begin{proof}
	Construction of the maps $g_f$ and $F_f$ reveals that for $y \in [0,1]$,  if the \emph{trajectory} $T_{f}(y)$ of $y$ under $f$ is \emph{disjoint} from $Y_f$, then the \emph{trajectory} $T_{F_f}(\lambda(y))$ of $\lambda(y)$ under $F_f$ is \emph{disjoint} from $\mathcal{U}$.  From Theorem \ref{degree:one:result:1}(5),  this means that, the \emph{trajectory} $T_{F_f}(\lambda(y))$ of $\lambda(y)$ under $F_f$ and the  \emph{trajectory} $T_{(F_f)_{\ell}}(\lambda(y))$ of $\lambda(y)$ under  the \emph{lower bound map} $(F_f)_{\ell}$ are  are \emph{coincident}. So, Theorem \ref{existence:unfolding:interval} and Lemma \ref{eventually:coincident},  entails that \emph{rotation number} of $\lambda(y)$ under $F_f$ \emph{exists} and equals $u_f$:   the \emph{rotation number} of $\lambda(y)$ under the \emph{lower bound map} $(F_f)_{\ell}$. Finally, definition \ref{unfolding:number}, furnishes that the \emph{unfolding set} $\mathcal{U}_f(y)$  of $y$ is precisely equal to $\{ u_f \}$.
\end{proof}

 By a \emph{minimal} set of a \emph{map} $h$
of a \emph{compact space} to \emph{itself}, we mean an \emph{invariant compact set} $Z$ all
of whose points have \emph{dense trajectories} in $Z$ (thus, a \emph{compact
invariant subset} of $Z$ \emph{coincides} with $Z$).  Let us call a \emph{maximal} (\emph{in terms of set inclusion}) \emph{open set}  where a \emph{continuous function} $h$  is  \emph{constant}, a \emph{flat spot} of  $h$. The following \emph{result} follows: 

\begin{theorem}\label{unfolding:number:measure:1}
	
	Let $f : [0,1] \to [0, 1]$ be a continuous map with unfolding interval $[u_f, \frac{1}{2}]$. Then, there exists a minimal $f$-invariant set $Z_f \subset  Y_f$ such that for any point $ y \in Z_f$, we have $\mathcal{U}_f(y) = \{ u_f \}$ and there are two possibilities :
	
	\begin{enumerate}
		
		\item $u_f$ is rational, $Z_f$ is a periodic orbit, and
		$f|_{Z_f}$ is canonically conjugate to the circle rotation by $u_f$
		restricted on one of its cycles. 
		
		\item $u_f$ is irrational, $Z_f$ is a Cantor set, and $f|_{Z_f}$
		is canonically  semi-conjugate to the circle
		rotation by $u_f$.
		
	\end{enumerate}

\end{theorem}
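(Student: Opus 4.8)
The plan is to push the whole statement down to the non-decreasing lift $G := (F_f)_{\ell} \in \mathcal{L}_1'$, which by Theorem \ref{existence:unfolding:interval} satisfies $\rho(G) = u_f$ and whose flat spots are exactly $\mathrm{Const}(G) = \mathcal{U}$. The guiding observation is twofold. First, by Theorem \ref{degree:one:result:1}(5) (in contrapositive form) the maps $G$ and $F_f$ agree at every point lying outside $\mathcal{U}$; hence any $G$-invariant set disjoint from $\mathcal{U}$ is automatically $F_f$-invariant with the identical dynamics, and by Lemma \ref{eventually:coincident} the two maps assign it the same rotation data. Second, the proof of Theorem \ref{unfolding:number:measure:prelim} ties the avoidance of $\mathcal{U}$ by the $F_f$-trajectory of $\lambda(y)$ to the containment of the $f$-trajectory of $y$ in $Y_f$, so that a set produced off $\mathcal{U}$ pulls back inside $Y_f$ and Theorem \ref{unfolding:number:measure:prelim} then returns $\mathcal{U}_f(y)=\{u_f\}$. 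It therefore suffices to manufacture, for the non-decreasing map $G$, a minimal set disjoint from $\mathcal{U}$ on which $G$ is (semi-)conjugate to the rigid rotation $R_{u_f}$, and to transport it back through the folding construction to $f$.

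Suppose first that $u_f = k/n$ is rational with $k,n$ coprime. Here I would apply Theorem \ref{degree:one:result:2} directly to $G \in \mathcal{L}_1'$: it produces a cycle of the circle map underlying $G$, of period $n$, whose lifted cycle $Q'$ is disjoint from $\mathrm{Const}(G) = \mathcal{U}$. A single periodic orbit is minimal, and since $G$ is non-decreasing its restriction to $Q'$ preserves the cyclic order of the $n$ points, so $G|_{Q'}$ is canonically conjugate to $R_{k/n}$ restricted to one of its $n$-point cycles. As $Q'$ avoids $\mathcal{U}$ we have $G = F_f$ on $Q'$, whence $Q'$ is a lifted $F_f$-cycle; Theorem \ref{conjugacy:thm:3} (together with Theorem \ref{periodic:thm1}) then identifies it with a genuine $f$-cycle $Z_f$, which lies in $Y_f$ by the relation above, and $\mathcal{U}_f(y) = \{u_f\}$ for each $y \in Z_f$ by Theorem \ref{unfolding:number:measure:prelim}.

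Suppose instead that $u_f$ is irrational. By Theorem \ref{non:decreasing:exists} the circle map underlying $G$ has no periodic point, and I would invoke the Poincar\'e--Denjoy theory for non-decreasing degree-one circle maps: such a map with irrational rotation number $u_f$ admits a monotone continuous semi-conjugacy onto $R_{u_f}$ and possesses a unique minimal set $K$, which is a Cantor set whose complementary gaps are precisely the (projected) flat spots of $G$, i.e. the components of $\mathcal{U}$. In particular $K$ is disjoint from $\mathcal{U}$, so $G = F_f$ on $\pi^{-1}(K)$, making $K$ an $F_f$-minimal Cantor set still semi-conjugate to $R_{u_f}$. Transporting $\pi^{-1}(K)$ back through the construction yields a Cantor set $Z_f \subset Y_f$ with $f|_{Z_f}$ semi-conjugate to $R_{u_f}$, and once more $\mathcal{U}_f(y) = \{u_f\}$ on $Z_f$ by Theorem \ref{unfolding:number:measure:prelim}.

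The step I expect to be the crux is the transport, in both cases, from the lift $F_f$ on $\mathbb{R}$ back to $f$ on $[0,1]$. Because $F_f$ is assembled from $g_f$ by the repeated reflections of the construction and is not itself conjugate to $f$, the transfer is governed by the folding projection $\mathbb{R}\to[0,\tfrac12]$ implicit in that construction --- the same mechanism underpinning Theorem \ref{conjugacy:thm:3} --- composed with $\lambda^{-1}$. For the rational case this mechanism is already packaged as the cycle-correspondence of Theorems \ref{periodic:thm1} and \ref{conjugacy:thm:3}; the genuinely new work is to check that it carries the irrational minimal Cantor set faithfully, preserving minimality and the semi-conjugacy type and landing inside $Y_f$. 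Once this is verified from the explicit piecewise description of $F_f$ together with Lemmas \ref{pull:back} and \ref{eventually:coincident}, the Cantor/periodic dichotomy and the (semi-)conjugacy to $R_{u_f}$ descend verbatim from the classical circle theory.
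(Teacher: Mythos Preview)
Your plan is the paper's own: pass to $G=(F_f)_\ell$, produce a $G$-orbit disjoint from the flat spots $\mathcal{U}$ (Theorem~\ref{degree:one:result:2} when $u_f\in\mathbb{Q}$, Poincar\'e's classification when $u_f\notin\mathbb{Q}$), note that $G=F_f$ there, and transport back to $f$. The one substantive difference is in how the rational case is transported. The paper does it through an explicit partial conjugacy $\psi_f:\mathbb{S}\to[0,1]$ between the circle map $\tau_f$ underlying $G$ and $f$, and this $\psi_f$ is only valid off the \emph{closure} $\pi(\overline{\mathcal{U}})$; since the cycle supplied by Theorem~\ref{degree:one:result:2} avoids $\mathcal{U}$ but may sit on endpoints of flat spots, the paper inserts a limiting argument --- perturb to nearby points whose finite $\tau_f$-orbits avoid $\pi(\overline{\mathcal{U}})$, push those through $\psi_f$, and take the limit $f$-periodic orbit as $Z_f$. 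Your shortcut through the cycle correspondence of Theorems~\ref{periodic:thm1} and~\ref{conjugacy:thm:3} plausibly sidesteps this and delivers the $f$-cycle directly; what it does \emph{not} automatically give you is the containment $Z_f\subset Y_f$, because the link you quote from the proof of Theorem~\ref{unfolding:number:measure:prelim} is stated only as a one-way implication (trajectory in $Y_f$ $\Rightarrow$ $F_f$-trajectory off $\mathcal{U}$), and the converse needs exactly the same boundary-of-flat-spot analysis. So either route runs into the $\mathcal{U}$ versus $\overline{\mathcal{U}}$ issue at the same place, and the paper's limiting device is its way of resolving it.
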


\begin{proof}
	The \emph{natural projection} map $\pi: \mathbb{R} \to \uc$ \emph{monotonically semi-conjugates} $(F_f)_{\ell}$ to a \emph{degree one map} $\tau_f: \uc \to \uc$. Further, since  $(F_f)_{\ell}$ is \emph{monotonically non-decreasing},  $\tau_f :\uc \to \uc$ \emph{preserves}  \emph{cyclic orientation} in the 
	\emph{non-strict sense}. From Theorem \ref{degree:one:result:2}, it follows that there exists a \emph{point} $ z \in \mathbb{R}$, such that, the \emph{trajectory} $T_{F_f}(z)$ of $z$ under $F_f$ is \emph{disjoint} from $\mathcal{U}$, (that is, the \emph{trajectories} $T_{F_f}(z)$ and $T_{(F_f)_{\ell}}(z)$ are \emph{coincident}), such that $\rho_{F_f}(z) = \rho_{(F_f)_{\ell}}(z) =  \rho((F_f)_{\ell}) = u_f$. 
	We have two cases here:
	
	\begin{enumerate}
		\item 
		If $u_f= \frac{p}{q}$ is \emph{rational}, then $\pi(z) \in \uc$ is \emph{periodic} under $\tau_f$ of \emph{period} $q$. If $P_f$ be the \emph{orbit} of $\pi(z)$, under $\tau_f$, then the map $\tau$ \emph{restricted} on $P_f$ acts as \emph{circle rotation} by \emph{angle} $\frac{p}{q}$. 
		
		\item  If $u_f$ is \emph{irrational}, then $\pi(z) \in \uc$  belongs to the \emph{omega limit set}  $\omega_{\tau_f}\left (\pi(z) \right ) = Q_f \subset \uc$ of $\pi(z)$ under $\tau_f$. It is \emph{minimal} (that is, $\tau_f$-\emph{orbits} of all points of $Q_f$ are \emph{dense} in $Q_f$)  such that \emph{collapsing arcs} of $\uc$ complementary to $Q_f$, we can \emph{semi-conjugate} $\tau_f : \uc \to \uc$ to \emph{irrational rotation} of $\uc$ by the \emph{angle} $u_f$. In this case, clearly, $Q_f$ is a \emph{cantor set}.

	\end{enumerate}
	We need to now find the appropriate $f$-\emph{invariant set} $Z_f \subset [0, 1]$ whose existence is claimed in the theorem. The \emph{correspondence} between the maps involved in our construction, Lemma \ref{pull:back},  implies the existence of a \emph{continuous conjugacy} $\psi_f : \uc \to [0,1 ]$ between the maps $\tau : \uc \to \uc$ and $f : [0, 1] \to [0, 1]$ applicable outside $\pi(\overline{\mathcal{U}}) \subset \uc$.

		If $u_f$ is \emph{irrational}, then clearly, we can easily find a point $y \in Q_f \subset \uc$ such that $T_{\tau_f}(y) \cap \pi (\overline{\mathcal{U}}) = \phi $. Then, it follows that there is a \emph{continuous conjugacy} between $\tau|_{T_{\tau_f}(y)}$ and $f|_{Z_f}$ where $Z_f$ is the \emph{omega limit set} $\omega_f(\psi_f(y))$ of $\psi_f(y)$ under $f$. It is then easy to see that $Z_f$ is a \emph{Cantor Set} and has all the desired properties. 
	
	Suppose $u_f = \frac{p}{q}$ is \emph{rational}.  Since, by definition, $T_{F_f}(z) \cap \mathcal{U} = \phi$, it follows that,  $P_f \cap \pi(\mathcal{U}) = \phi $, but, it is not guaranteed that $P_f \cap \pi (\overline{\mathcal{U}}) = \phi$. However, as we now show,  the specifics of the \emph{construction} helps us to elude this problem. If $P_f \cap \pi (\overline{\mathcal{U}}) = \phi $, then we are done by the arguments similar to the ones from the previous paragraph. Suppose that $P_f$ passes through an \emph{endpoint} $\eta$ of a \emph{flat spot} of $\tau_f$. Choose a \emph{point} $y$ very \emph{close} to $\eta$ avoiding \emph{flat spots} of $\tau_f$. Then, the \emph{finite segment} of the $\tau_f$ orbit of $y$ consisting of \emph{points} $ y, \tau_f(y), \dots \tau_f^{q}(y) \approx y$ is \emph{transformed} by $\psi_f$ into a \emph{finite segment} $\psi_f(y), f(\psi_f(y)), \dots f^q(\psi_f(y)) \approx \psi_f(y)$ which \emph{converges} to an $f$-\emph{periodic orbit} as $y \to \eta$. The \emph{limit periodic orbit} $Z_f$ consisting of $\psi_f(y), f(\psi_f(y)), \dots f^q(\psi_f(y)) \approx \psi_f(y)$ has all the desired properties. 
	
\end{proof}

Theorem \ref{unfolding:number:measure:1} leads us to the following result:

\begin{theorem}\label{unfolding:number:measure:2}
		Let $f : [0,1] \to [0, 1]$ be a continuous map with unfolding interval $[u_f, \frac{1}{2}]$. Then, there exists a point $z \in [0, 1]$ such that the map $f|_{Z_f}$ where $Z_f = \omega_f(z)$ is strictly ergodic. 
\end{theorem}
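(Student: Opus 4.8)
The plan is to leverage Theorem \ref{unfolding:number:measure:1}, which already produces a minimal $f$-invariant set $Z_f \subset Y_f$ such that $f|_{Z_f}$ is (canonically) semi-conjugate to the circle rotation by $u_f$ — a periodic orbit in the rational case and a Cantor set in the irrational case. The present theorem asks for a point $z$ with $\omega_f(z) = Z_f$ and $f|_{Z_f}$ \emph{strictly ergodic} (i.e. minimal and uniquely ergodic). Minimality is already guaranteed by the previous theorem's construction, so the real content to verify is unique ergodicity, and the existence of $z$ realizing $Z_f$ as an omega-limit set.

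First I would dispose of the rational case immediately: when $u_f = \frac{p}{q}$, the set $Z_f$ is a single periodic orbit, which is trivially minimal and carries a unique invariant probability measure (the equidistributed CO-measure $\mu_{Z_f}$); taking $z$ to be any point of $Z_f$ gives $\omega_f(z) = Z_f$, and strict ergodicity is automatic. The substantive case is the irrational one. Here $f|_{Z_f}$ is, via the conjugacy $\psi_f$ built in Theorem \ref{unfolding:number:measure:1}, topologically conjugate to the action of the irrational circle rotation $R_{u_f}$ restricted to a minimal Cantor set (the $\omega$-limit set $Q_f$ of a point under $\tau_f$ avoiding $\pi(\overline{\mathcal{U}})$). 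The key step is then to transport unique ergodicity across this conjugacy: the irrational rotation $R_{u_f}$ on $\uc$ is uniquely ergodic (Lebesgue measure is its only invariant probability measure, a classical fact of Poincaré which I may invoke), and unique ergodicity is preserved under topological conjugacy. Since $\tau_f$ restricted to the minimal set $Q_f$ is semi-conjugate to $R_{u_f}$ by a map collapsing only the complementary arcs, and $\psi_f$ conjugates $\tau_f$ to $f$ away from the flat-spot debris, I would argue that $f|_{Z_f}$ inherits a unique invariant measure. To choose $z$, I would take $z = \psi_f(y)$ for the point $y \in Q_f$ selected in the previous proof with $T_{\tau_f}(y) \cap \pi(\overline{\mathcal{U}}) = \emptyset$; then $\omega_f(z) = Z_f$ by the conjugacy, and minimality forces every orbit in $Z_f$ to be dense, so $Z_f = \omega_f(z)$ coincides with the minimal set.

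The main obstacle I anticipate is the careful handling of the semi-conjugacy versus conjugacy distinction in the irrational case. The map from $Q_f$ to $\uc$ induced by collapsing complementary arcs is only a semi-conjugacy, not a homeomorphism, because the Cantor set $Q_f$ has gaps that map to single points of $\uc$. To conclude unique ergodicity of $f|_{Z_f}$ I must verify that this semi-conjugacy is measure-theoretically an isomorphism for \emph{any} invariant measure — equivalently, that the collapsed arcs carry no mass, so that invariant measures on $Z_f$ push forward bijectively to invariant measures on $(\uc, R_{u_f})$. Since an invariant measure for a minimal system has no atoms and the endpoints of the complementary gaps form a countable set, the pushforward is injective on invariant measures; combined with surjectivity (any invariant measure of $R_{u_f}$ lifts), this yields a bijection, and uniqueness upstairs transfers uniqueness downstairs.

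Finally I would assemble these pieces: strict ergodicity means minimal plus uniquely ergodic. Minimality of $Z_f$ is inherited from $Q_f$ through the conjugacy $\psi_f$ (in the irrational case) or holds trivially (in the rational case), and unique ergodicity follows from the transfer argument above. Declaring $z$ as constructed and invoking Theorem \ref{unfolding:number:measure:1} for the structure of $Z_f$, the statement follows. I expect the write-up to be short, since almost all the geometric work is already embedded in the preceding theorem; the new ingredient is purely the ergodic-theoretic transfer of unique ergodicity along the (semi-)conjugacy.
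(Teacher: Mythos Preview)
Your proposal is correct and follows essentially the same approach as the paper: both deduce strict ergodicity from Theorem \ref{unfolding:number:measure:1} by splitting into the rational case (CO-measure on a periodic orbit) and the irrational case (pulling back Lebesgue measure via the semi-conjugacy to the irrational rotation). Your write-up is in fact more careful than the paper's, which simply asserts the existence of a unique invariant measure ``transformed in a canonical fashion'' from the rotation without spelling out why the semi-conjugacy induces a bijection on invariant measures; your observation that the countably many gap endpoints carry no mass under any invariant measure is exactly the missing justification.
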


\begin{proof}
	It follows from Theorem \ref{unfolding:number:measure:1}, that there exists a \emph{point} $z \in [0, 1]$ such that the map $f|_{Z_f}$ where $Z_f = \omega_f(z)$ has a unique \emph{invariant measure} $\mu_f $ such that the \emph{measure} $\mu_f$ can be \emph{transformed} in  a \emph{canonical fashion} to a specific \emph{invariant measure} related to \emph{circle rotation} by angle $u_f$. If $u_f$ is \emph{rational}, the \emph{measure} $\mu_f$ is simply the CO-\emph{measure} concentrated on the $f$-\emph{periodic orbit} of $z$ (see Section \ref{introduction}) whereas if $u_f$ is \emph{irrational}, $\mu_f$ corresponds in a canonical fashion to the \emph{Lebesgue measure} on $\uc$ \emph{invariant} under \emph{irrational rotation} on angle $u_f$. Hence, the result follows.  
	\end{proof}

	\section{Characterizing dynamics of a cycle with a given unfolding number }\label{reading:unfolding:comb}
	
As demonstrated in Section \ref{first:main:section}, computing \emph{unfolding number} of a \emph{periodic orbit} $P$ typically requires first constructing, the \emph{heaved map} $F_f$, where $f$ is the $P$-\emph{linear map} \emph{corresponding} to the \emph{periodic orbit} $P$.  In this section, we reveal that \emph{unfolding number} of  a \emph{periodic orbit} $P$ can be read directly from the \emph{combinatorics} of the \emph{permutation} \emph{corresponding} to $P$ eliminating need to construct $F_f$.

We commence by introducing a few notions which would be used throughout this section:  Let $P$ be a \emph{periodic orbit} of \emph{period} $q$.  Let $f:[0,1] \to [0,1]$ be a $P$-\emph{linear} map such that the \emph{left-end point} and \emph{right-end point} of $P$ are $le(P) =0$ and $ri(P)=1$ respectively. Let $m_f$ and $M_f$ be points of \emph{absolute minimum} and \emph{absolute maximum} of $f$ such that $f(m_f) =0$ and $f(M_f) = 1$. Likewise, $m_g$ and $M_g$ are points of \emph{absolute minimum} and \emph{absolute maximum} of the \emph{miniature model map} $g_f$, corresponding to $f$,  such that $f(m_g) =0$ and $f(M_g) = \frac{1}{2}$.

 \begin{definition}\label{unfolding:set:defn}

 	 By \emph{unfolding index set} of $P$, we mean the \emph{maximal} (\emph{in terms of set inclusion}) set $\mathcal{N}(P) = \{ n_1,$ $ n_2, \dots n_k\}$  of \emph{non-negative integers} satisfying the following properties: 
 	
 	\begin{enumerate}
 		\item $0  \leqslant n_i <  n_{i+1} < q$, for all $i \in \{0, 1,2,\dots k-1\}$. 
 		
 		\item $n_1$ is the \emph{smallest non-negative integer} such that $f^{n_1-1}(0) < m_f$ and $f^{n_1 -2}(0) >M_f$.

 		\item  $n_{i+1}$ is the \emph{smallest non-negative integer}, \emph{greater than or equal to}  $n_i +2$ but strictly \emph{less} than $q$ such that:  $f^{n_{i+1}-1}(0) < m_f$ and $f^{n_{i+1} - 2}(0) >M_f$,  for all $i \in \{0, 1,2,\dots k-1\}$. 
 	\end{enumerate}
 	
 	The \emph{points}  $f^{n_i}(0) \in P$, $n_i \in \mathcal{N}(P) $  are called  \emph{unfolding points} of $P$. 
 
 \end{definition}

\begin{theorem}\label{unfolding:computation}
If  $P$ has $p$  unfolding points, then, the unfolding number of $P$ is $\frac{p}{q}$. 
\end{theorem}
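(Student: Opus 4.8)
The plan is to show that the unfolding number $\frac{p}{q}$ computed via the heaved map $F_f$ coincides with the count of unfolding points, by tracking exactly how one full period of the $f$-orbit of $0$ lifts to a displacement under $F_f$. First I would fix the periodic point $0 = le(P)$ and follow its trajectory $0, f(0), f^2(0), \dots, f^{q-1}(0)$ back to $0$. Under the conjugacy $\lambda(x) = \frac{x}{2}$, this corresponds to the trajectory of $\lambda(0)$ under $g_f$, and by Theorem \ref{periodic:thm1} the trajectory $T_{F_f}(\lambda(0))$ is a lifted periodic orbit of $F_f$ of period $q$. Since $F_f^{q}(\lambda(0)) = \lambda(0) + p'$ for some integer $p'$, the unfolding number is $\frac{p'}{q}$, so it suffices to prove $p' = p$, that is, that the total integer displacement accrued over one period equals the number of unfolding points.

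The key step is a local analysis of the four pieces $g_f, p_f, q_f, r_f$ from which $F_f$ is assembled, to determine precisely when an application of $F_f$ crosses an integer (equivalently, when the $F_f$-image lands in the next fundamental domain upward). By construction, $F_f$ sends $[0, M_g] \cup [M_g, \frac{1}{2}]$ into $[0,1]$ and $[\frac{1}{2}, 1-m_g] \cup [1-m_g, 1]$ into $[\frac12, \frac32]$, with the genuine ``unfolding'' (an increment past an integer) occurring exactly when the trajectory passes through the $r_f$-branch, i.e.\ when a point lies in $[1-m_g, 1]$ and is mapped above $1$. Translating back through $\lambda$ and the reflections defining $p_f, q_f, r_f$, I would show that such an increment happens precisely at an index $n$ where the preceding orbit point $f^{n-2}(0)$ lies above $M_f$ (so that $g_f$ has just carried the miniature trajectory up past its maximum $\frac12$, forcing a fold into the upper square) and the current point $f^{n-1}(0)$ lies below $m_f$ (forcing the subsequent fold across $x = \frac12$ and then $y = 1$). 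This is exactly the defining condition (2)–(3) of $\mathcal{N}(P)$ in Definition \ref{unfolding:set:defn}.

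Having established that each unfolding point contributes exactly $+1$ to the integer displacement and that no other index contributes, I would sum over one period: the number of increments is $|\mathcal{N}(P)| = p$, giving $F_f^{q}(\lambda(0)) = \lambda(0) + p$, hence $p' = p$ and $un(P) = \frac{p}{q}$ as claimed. The main obstacle I anticipate is the careful bookkeeping at the boundary cases, namely the requirement $n_{i+1} \geqslant n_i + 2$ (two consecutive folds cannot be collapsed, since an unfolding event intrinsically occupies a ``down-then-up'' pair of orbit steps corresponding to passing through both $m_f$ and $M_f$) and the wrap-around closure of the loop, ensuring the count is consistent when the period completes. Verifying that the combinatorial condition in Definition \ref{unfolding:set:defn} captures all and only the integer-crossing steps of $F_f$ — with no double-counting and no missed crossings at the folding lines $y = \frac12$, $x = \frac12$, $y = 1$ — is where the delicate part of the argument lies.
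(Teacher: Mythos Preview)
Your proposal is correct and follows essentially the same approach as the paper: track the lifted trajectory $T_{F_f}(0)$ step by step, show that an integer crossing of $F_f$ occurs precisely when the pair of conditions $f^{n-2}(0)>M_f$ and $f^{n-1}(0)<m_f$ holds, and invoke the maximality of $\mathcal{N}(P)$ to conclude the displacement is exactly $p$. The paper's proof is slightly more explicit in writing out the chain $F_f^i(0)=g_f^i(0)$ up to $i=n_1-2$, then $F_f^{n_1-1}(0)\in(1-m_g,1)$, hence $F_f^{n_1}(0)\geqslant 1$, and iterating, but the logic is identical to what you outline.
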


\begin{proof}

\emph{Definition} of $\mathcal{N}(P)$ coupled with the \emph{construction} of the \emph{maps} $g_f$ and $F_f$ reveals that the \emph{trajectory} of $0$ under $g_f$  and \emph{trajectory} of $0$ under $F_f$ are the \emph{same} till the first $n_1 -2$ \emph{iterates}, that is, $g_f^{i}(0) = F_f^{i}(0)$, for $i \in \{0,1,2, \dots , n_1-2\}$. Furthermore,  since $f^{n_1 -2}(0) > M_f$, we have: $g_f^{n_1-2}(0) = F_f^{n_1-2}(0) > M_g$.  Consequently, $F_{f}^{n_1 -1}(0) > \frac{1}{2}$. Moreover, since, $f^{n_1-1}(0) < m_f $, we must have: $g_f^{n_1-1}(0) < m_g$. So,  $1-m_g < F_f^{n_1-1}(0) < 1 $ and hence, $F_{f}^{n_1}(0) \geqslant 1$.

The above mentioned scheme persists with $F_f^{i}(0) \in \left (1,2 \right )$, for $ n_1 < i  \leqslant  n_2 -1$ and $F_f^{n_2}(0) \geqslant 2$; then,  $F_f^{i}(0) \in (2,3)$ for $ n_2 < i \leqslant n_3 -1$ and $F_f^{n_3}(0)  \geqslant 3$, and so forth, leading to $F_f^{n_p}(0) \geqslant p $.   We assert that $F_f^{q}(0) = 0 + p$. Indeed,  since $T_f(0) =P$ is a \emph{periodic orbit} of $f$ of period $q$, $T_{F_f}(0)$ is a \emph{lifted periodic orbit} of $0$ of  same \emph{period} $q$. Thus,  $F_f^{q}(0) = 0 + \ell $ for some $\ell \in \mathbb{N}$. Since, $n_p \leqslant q$, it follows that,  $\ell \geqslant p$. If $ \ell \geqslant p+1$, then there exists $m \in \mathbb{N} , m > n_p$ such that $F_f^{m}(0) > 1$. However, this is feasible only  if, there exists $m \in \mathbb{N},  m > n_p$ such that $f^{m-1}(0) < m_f$ and $f^{m-2}(0) > M_f$, contradicting the \emph{maximality} of  $\mathcal{N}(P)$. Thus, $\ell = p$, establishing the result. 
\end{proof}

Theorem \ref{unfolding:computation} enables us to define \emph{unfolding numbers} in a manner reminiscent of the traditional approach used for defining \emph{rotation numbers}, as expounded in Section \ref{first:main:section}. We define an \emph{observable} $\Phi: P \to \mathbb{R}$ as follows. For each $i \in \mathbb{N}$ define: 

\begin{equation}
	\nonumber
	\Phi(f^i(0))=
	\begin{cases}
		
		1 & \text{if $i \in \mathcal{N}$}\\
		0 &   \text{if $i \notin \mathcal{N}$}\\
			\end{cases}
\end{equation}
Then the following easily follows from Theorem \ref{unfolding:computation}.

\begin{cor}\label{unfolding:traditional}
The unfolding number of $P$ is $\frac{1}{q}  \sum_{i=1}^{q} \Phi(f^i(0))$ and the unfolding pair of $P$ is: $( \sum_{i=1}^{q} \Phi(f^i(0)), q)$.
\end{cor}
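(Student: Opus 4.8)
The plan is to recognize that the observable $\Phi$ is nothing but the indicator function of the unfolding index set $\mathcal{N}(P)$ transported onto the orbit of $0$, so that summing it over one full period merely counts the unfolding points; Theorem \ref{unfolding:computation} then finishes the job immediately. Accordingly, this should be a bookkeeping corollary rather than a genuine argument, and I do not expect any real obstacle.

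First I would observe that, by the defining formula for $\Phi$, we have $\Phi(f^i(0)) = 1$ precisely when $i \in \mathcal{N}(P)$ and $\Phi(f^i(0)) = 0$ otherwise. Hence $\sum_{i=1}^{q} \Phi(f^i(0))$ is exactly the number of indices $i \in \{1, 2, \dots, q\}$ lying in $\mathcal{N}(P)$. Next I would check that this count equals the cardinality $p$ of $\mathcal{N}(P)$, i.e. the number of unfolding points. By condition (1) of Definition \ref{unfolding:set:defn} every element of $\mathcal{N}(P)$ satisfies $n_i < q$, and by condition (2) the smallest element $n_1$ satisfies $n_1 \geq 2$ (so that the iterate $f^{n_1-2}(0)$ is well defined). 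Thus $\mathcal{N}(P) \subseteq \{2, 3, \dots, q-1\}$, which is contained in the summation range $\{1, 2, \dots, q\}$, while no index outside $\mathcal{N}(P)$ contributes. The one point deserving a line of care is the boundary term $i = q$: here $f^q(0) = f^0(0) = 0$, but $q \notin \mathcal{N}(P)$ by condition (1), so this term is $0$, consistently with $\Phi$ being well defined on the point $0 \in P$. Therefore $\sum_{i=1}^{q} \Phi(f^i(0)) = |\mathcal{N}(P)| = p$.

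Finally, Theorem \ref{unfolding:computation} asserts that a periodic orbit with $p$ unfolding points has unfolding number $\frac{p}{q}$; substituting $p = \sum_{i=1}^{q} \Phi(f^i(0))$ yields the claimed formula $un(P) = \frac{1}{q} \sum_{i=1}^{q} \Phi(f^i(0))$. The statement about the unfolding pair then follows at once from Definition \ref{modified:unfolding:pair}, since $up(P) = (p, q) = \left( \sum_{i=1}^{q} \Phi(f^i(0)),\, q \right)$. This is precisely why the corollary \emph{easily follows} from Theorem \ref{unfolding:computation}.
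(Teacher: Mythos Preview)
Your proposal is correct and matches the paper's intent exactly: the paper gives no proof beyond the remark that the corollary ``easily follows from Theorem \ref{unfolding:computation}'', and your argument is precisely the unpacking of that remark --- recognizing $\Phi$ as the indicator of $\mathcal{N}(P)$, checking that the summation range $\{1,\dots,q\}$ captures all of $\mathcal{N}(P)\subseteq\{2,\dots,q-1\}$, and invoking Theorem \ref{unfolding:computation}.
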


\section{Forced Unfolding Interval}\label{forced:unfolding:interval}

	In this section we introduce the idea of \emph{forced unfolding interval}. We define:  \emph{unfolding number}, \emph{unfolding pair} and \emph{modified unfolding pair} of a \emph{pattern} $\pi$ to be the \emph{unfolding pair}, \emph{unfolding pair} and \emph{modified unfolding pair} of any \emph{cycle} $P$ which exhibits $\pi$. The \emph{unfolding number}, \emph{unfolding pair} and \emph{modified unfolding pair} of a \emph{pattern} $\pi$ are respectively denoted by $un(\pi), up(\pi)$ and $mup(\pi)$. We now introduce the notion of \emph{forced unfolding interval}: 
 \begin{definition}
 	We say that a \emph{pattern} $\pi$ \emph{forces} the \emph{unfolding interval} $U_{\pi}$,  if $U_{\pi} = [u_{\pi}, \frac{1}{2}]$ is the \emph{unfolding interval} of the $P$-\emph{linear map} $f$ where $P$ \emph{exhibits} $\pi$.
 \end{definition}

 It follows that: if $\rho(\pi)$ is the \emph{unfolding number} of $\pi$, then,  $ u_{\pi} \leqslant \rho({\pi})$. 
\begin{definition}\label{divergent}
	We call a cycle $P$ \emph{divergent} (or the \emph{pattern $\pi$ it exhibits}) if there are points $x,y,z$ of $P$ such that:
	
	\begin{enumerate}
		\item $x < y < z$
		
		\item $f(x) < x$, $f(y) \geqslant z$ and $f(z) \leqslant x$. 
	\end{enumerate}
	where $f$ is the $P$-\emph{linear map}. We call a \emph{pattern} $\pi$ \emph{divergent},  if any \emph{cycle} $P$ which \emph{exhibits} it is \emph{divergent}. 
\end{definition}

It is well known that if a  \emph{pattern} $\pi$ is \emph{divergent} then for any cycle $P$ of this \emph{pattern}, the $P$-\emph{linear map} $f$ has a \emph{horseshoe} and has \emph{cycles} of all possible \emph{periods}. We show that in fact,  in such a case, $f$ has \emph{cycles} of all possible \emph{unfolding pairs}.

	A \emph{cycle} $P$ (or the \emph{pattern} $\pi$ it \emph{exhibits}) is called \emph{convergent},  if it is not \emph{divergent}. There is another equivalent way to define \emph{convergent pattern}
	 Namely, let $\mathcal{U}$ be the \emph{family} of all \emph{interval maps} with a \emph{unique fixed point}. If $f$ is a $P$-\emph{linear map} for a cycle $P$, then $P$ is \emph{convergent} if and only if $f \in \mathcal{U}$.

\begin{theorem}\label{divergent:unfolding:interval}
	If $f$ has a divergent cycle, then the unfolding interval of $f$ is $[0, \frac{1}{2}]$.
\end{theorem}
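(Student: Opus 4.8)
The plan is to deduce $u_f=0$ from the realization statement announced just above the theorem: a divergent cycle should force cycles of \emph{every} admissible unfolding pair, and in particular cycles of unfolding pair $(1,q)$ for all large $q$. Since $Unf(f)=[u_f,\frac12]$ is a closed interval (Theorem \ref{existence:unfolding:interval}), the accumulation of the unfolding numbers $\frac1q$ at $0$ then forces $u_f=0$. The mechanism is the horseshoe concealed in a divergent cycle: from the points $x<y<z$ of Definition \ref{divergent}, setting $I=[x,y]$ and $J=[y,z]$, the inequalities $f(x)<x$, $f(y)\geqslant z$, $f(z)\leqslant x$ give $f(I)\supseteq I\cup J$ and $f(J)\supseteq I\cup J$. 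Hence every finite word in the symbols $I,J$ determines a loop of intervals, and Lemma \ref{ALM2} attaches to each such loop a genuine periodic orbit of $f$. This is the same device that makes a divergent pattern force all periods; the extra task here is to control the \emph{unfolding number} carried by these orbits.

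First I would make the count of unfolding points explicit. By Corollary \ref{unfolding:traditional} the unfolding number of an orbit equals $\frac1q\sum_i\Phi(f^i(0))$, and $\Phi$ detects exactly the transitions in which a point lying to the right of $M_f$ is immediately sent to the left of $m_f$. Thus, along the orbit associated to a given loop, $un(P)$ becomes a bookkeeping of how often the chosen itinerary is compelled to produce such a fold. The goal is then to select, for each $q$, a loop of length $\approx q$ whose itinerary triggers the fold exactly once. Concretely I would isolate inside the horseshoe a sub-chain on which $f$ is monotone and on which no $\Phi$-contribution can occur, follow it for about $q$ steps, and close the loop by a single passage through the folding region, thereby manufacturing orbits of unfolding pair $(1,q)$.

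Equivalently, and more cleanly, one can work on the lift. By Theorems \ref{existence:unfolding:interval} and \ref{degree:one:result:4} it is enough to show $0\in Rot(F_f)$, i.e. that the non-decreasing map $(F_f)_{\ell}$ has a fixed point; since unfolding numbers are non-negative, this reduces to exhibiting a single $t$ with $F_f(t)\leqslant t$, and unwinding the four-fold construction of $F_f$ shows such a $t$ exists precisely when $f(s)\leqslant s$ for some $s\leqslant M_f$, or $f(s)\geqslant s$ for some $s\geqslant m_f$. I expect this geometric translation to be the main obstacle. The horseshoe by itself yields orbits of all periods but not automatically orbits with vanishing unfolding number, so the heart of the matter is to use the precise location of the divergent triple, together with $f(M_f)=1$, $f(m_f)=0$ and $M_f<m_f$, to guarantee either such an off-diagonal point beyond the extremal points or, equivalently, a fold-free excursion of arbitrary length in the horseshoe. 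Once that is in hand, Theorem \ref{conjugacy:thm:3} identifies these orbits as the sought-after unfolding pairs and the interval collapses to $[0,\frac12]$.
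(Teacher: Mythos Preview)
Your proposal correctly identifies that the horseshoe hidden in a divergent cycle is the engine, and that the task is to control unfolding numbers along coded orbits. But the gap you yourself flag is exactly where the work lies, and the two-interval horseshoe $I=[x,y]$, $J=[y,z]$ is too coarse to close it: neither symbol pins down the position of an iterate relative to the extremal points $M_Q$, $m_Q$ of the associated orbit $Q$, so the itinerary alone does not tell you how many unfolding points are created. Your alternative lift approach is cleaner in spirit, but the reduction to ``$f(s)\leqslant s$ for some $s\leqslant M_f$ or $f(s)\geqslant s$ for some $s\geqslant m_f$'' is not implied by the divergent hypothesis: nothing prevents all three points $x,y,z$ from lying inside $(M_f,m_f)$, and in that case neither alternative is forced. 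So as it stands the proposal is a correct diagnosis of the mechanism but not a proof.

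The paper closes the gap by \emph{refining} the horseshoe with fixed points. From the divergent triple one first extracts fixed points $a\in(x,y)$ and $b\in(y,z)$ and a point $c\in(a,y)$ with $f(c)=b$, and then works with four intervals $J=[a,c]$, $K_1=[c,y]$, $K_2=[y,b]$, $L=[b,z]$. The covering relations $J\to J,K_1,K_2$; $K_i\to L$; $L\to J,K_1,K_2$ give a richer symbolic system in which, for any associated orbit $Q$, the absolute maximum $M_Q$ is forced to lie in $K_1\cup K_2$ and the absolute minimum $m_Q$ in $L$. With this localisation one sees that an unfolding point is generated precisely when the itinerary passes through a block $L\to J\to K_1$, and at no other transition. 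The explicit loop consisting of $q-3p-3$ copies of $J$, then $K_2$, then $p$ blocks $L\,J\,K_1$, then $L\,J$, therefore produces a cycle of period $q$ with exactly $p$ unfolding points, realising every admissible unfolding pair $(p,q)$ and forcing $u_f=0$. The idea missing from your plan is precisely this use of the two fixed points to carve out a ``quiescent'' interval $J=[a,c]$ on which iteration contributes no unfolding points; once that refinement is made, the bookkeeping you outlined with $\Phi$ becomes exact rather than heuristic.
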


\begin{proof}
	Let $x,y,z$ be defined as in Definition \ref{divergent}. Then, there exists a \emph{fixed point} $a$ between $x$ and $y$ and a \emph{fixed point} $b$ between $y$ and $z$. Also, there exists a point $c$ between $a$ and $y$ such that $f(c) =b$. Let $J =[a,c]$, $K_1 = [c,y]$, $K_2 = [y,b]$ and $L = [b,z]$. Then, the interval $J$ $f$-\emph{covers} the interval $J,K_1$ and $K_2$; the interval $K_1$ $f$-\emph{covers} $L$ ; the interval $K_2$ $f$-\emph{covers} $L$ and the interval $L$ $f$-\emph{covers} $J, K_1, K_2$.  Consider the \emph{loop} $ \alpha :  J \to J \to \dots J \to K_2 \to L \to J \to K_1 \to L \to J \to K_1 \to L \to J \to K_1 \to L \to \dots L \to J \to K_1 \to L \to J $ consisting of  $q-3p-3$ $J's$ followed by $K_2$, then $p$ \emph{blocks} $L \to J \to K_1$, then the \emph{block} $ L \to J$. Let $Q$ be the \emph{periodic orbit} \emph{associated} with $\alpha$ (Lemma \ref{ALM2}). From the \emph{location} of the \emph{intervals}: $J, K_1, K_2$ and $L$, it is clear that the point of \emph{absolute minimum} $m_Q$ of $Q$,  lies in the interval $L$ and the point of \emph{absolute maximum} $M_Q$ of $Q$,  lies in $ K_1 \cup K_2$. Let us start at the \emph{left end point} $le(Q)$ of $Q$ and travel along the \emph{loop} $\alpha$. Clearly, $le(Q) \in J$. Construction of the \emph{loop} $\alpha$ reveals that,  during our \emph{motion} along the \emph{loop}: $\alpha$,  the only time we can possibly encounter an \emph{unfolding point} is when we \emph{pass} through the \emph{block} $L \to J \to K_1$ in $\alpha$; also,  each time we pass through the \emph{block} $L \to J \to K_1$, we confront \emph{one} ``new" \emph{unfolding point}. It follows,  that the number of \emph{unfolding points} of $Q$ is $p$. Thus, from Theorem \ref{unfolding:computation}, the \emph{unfolding number} of $Q$ is $\frac{p}{q}$ and the \emph{unfolding pair} of $Q$ is $(p,q)$. Since, $p$ and $q$ are arbitrary, $f$ has \emph{cycles} of all possible \emph{unfolding pairs} and hence the result follows. 
\end{proof}

 \section{Comparison with over-rotation numbers}\label{comparison:over:numbers}

From Theorem \ref{divergent:unfolding:interval},  it is evident that, for the purpose of studying \emph{co-existence} of \emph{cycles} with different \emph{unfolding pairs}, it is sufficient to study \emph{convergent patterns}, and hence only consider maps $f \in \mathcal{U}$. Further as discussed in the beginning of Section \ref{first:main:section}, it is sufficient to consider the case $m_{\pi} > M_{\pi}$,  where $m_{\pi} $ and $ M_{\pi}$ are the \emph{points} of \emph{absolute minimum} and \emph{absolute maximum} of $\pi$.  We begin this section by  providing an example to illustrate that, in general, \emph{over-rotation numbers} and \emph{unfolding numbers} are different entities (See Figure \ref{trimodal}).

\begin{example}\label{counter:example}
	Consider the \emph{trimodal pattern} characterised by the \emph{permutation} $\Pi = (1,2,5, 7,10,3,6,8,9,4,11) $. It has \emph{over-rotation number} $\frac{3}{11}$, but \emph{unfolding number} $\frac{5}{11}$ (See Figure \ref{trimodal}).
	
\end{example}

In the end, we now portray the existence of a peculiar \emph{patterns} for which \emph{over-rotation number} equals \emph{unfolding number}!

\begin{definition}
A  \emph{convergent pattern} $\pi$ is called \emph{sheer},  if it is \emph{convergent} and is \emph{decreasing} in $[M_{\pi}, m_{\pi}]$. 
\end{definition}

\begin{theorem}\label{overrotation:unfolding:equality}
	Let $\pi$ be a convergent pattern with over-rotation number $orn(\pi)$ and unfolding number $un(\pi)$. Then,   $orn(\pi) \leqslant un(\pi)$. Further, if  $\pi$ is sheer, $orn(\pi) = un(\pi)$.
\end{theorem}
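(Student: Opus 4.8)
The plan is to reduce the statement to a comparison of two integer counts taken over a single period and then to exhibit an order-preserving matching between the features they count. Fix the period $q$ and, following Section \ref{reading:unfolding:comb}, take $f$ to be the $P$-linear map normalized by $le(P)=0$, $ri(P)=1$, with $m_f,M_f$ the points of absolute minimum and maximum. On the common denominator $q$ the two invariants read $un(\pi)=\frac1q\sum_{i=1}^{q}\Phi(f^i(0))$ by Corollary \ref{unfolding:traditional}, where $\Phi$ is the indicator of the unfolding index set $\mathcal N(P)$ of Definition \ref{unfolding:set:defn}, and $orn(\pi)=\frac1q\sum_{i=0}^{q-1}\chi(f^i(0))$ by definition of the over-rotation number. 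Since $\chi(f^i(0))=\tfrac12$ precisely when $f^{i+1}(0)$ is a strict local extremum of the cyclic time series $\{f^j(0)\}$, the numerator $l=\sum\chi$ equals the number of temporal local maxima of the orbit (maxima and minima alternate, so each is counted once with weight $\tfrac12$). Writing $p=|\mathcal N(P)|$, the theorem becomes the two combinatorial assertions: (i) $l\leqslant p$ always, and (ii) $l=p$ when $\pi$ is sheer.

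For (i) I would build an injection from the local maxima of the orbit into $\mathcal N(P)$. To a peak reached at time $j$ I associate the first later time at which the orbit takes a step from a point above $M_f$ to a point below $m_f$; such a pair is exactly what $\mathcal N(P)$ records. For a peak exceeding $M_f$ the descent leaving $f^j(0)$ produces such a step before the orbit can rise to the next peak, and, since the unique fixed point $a$ of the convergent map $f$ lies strictly inside $[M_f,m_f]$ (here convergence, $f\in\mathcal U$, is used), distinct peaks are separated by a visit below $m_f$; the greedy gap-two rule of Definition \ref{unfolding:set:defn} then keeps the associated indices distinct, yielding $l\leqslant p$. The inequality is typically strict because $\mathcal N(P)$ also registers monotone crossings of the central interval $(M_f,m_f)$—in particular up-steps that are not turns of the orbit—and this surplus is exactly what separates $orn(\pi)=\tfrac3{11}$ from $un(\pi)=\tfrac5{11}$ in Example \ref{counter:example}. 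Conceptually the same inequality should also follow by realizing $orn(\pi)$ as the rotation number of a coarser ``single fold'' of $f$ whose lift is dominated by $F_f$ on the fundamental domain; monotonicity of the rotation number under domination (Theorem \ref{degree:one:result:1}) would then give $orn(\pi)\leqslant un(\pi)$ directly, and I would use this as a cross-check.

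For (ii) the surplus crossings must vanish. When $\pi$ is sheer, $f$ is monotone decreasing on $[M_f,m_f]$, and the role of this hypothesis is precisely to prevent the orbit from crossing $(M_f,m_f)$ except in tandem with a genuine turn, so that every qualifying pair selected into $\mathcal N(P)$ sits at the foot of a descent originating at a unique local maximum; the injection of (i) then becomes a bijection and $l=p$. I would make this transparent by checking that, under monotonicity on $[M_f,m_f]$, the over-rotation fold of $f$ is non-decreasing, so its rotation number is unambiguous by Theorem \ref{non:decreasing:exists} and is realized, by Theorem \ref{degree:one:result:2}, on an orbit off the flat spots where it agrees with the value $\rho(F_f)$ computed on the same cycle, forcing $orn(\pi)=un(\pi)$.

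The step I expect to be the main obstacle is the injectivity in (i), together with its sharpening in (ii). The delicate configurations are turns that do not themselves reach a threshold—a peak lying below $M_f$, or a valley lying above $m_f$, entirely within one of the outer regions $[0,M_f]$ or $[m_f,1]$—and, dually, the central up-steps that can occur even when $f$ is monotone on $[M_f,m_f]$. For the inequality I would charge each such turn to the nearest threshold crossing on the correct side of the fixed point $a$, using that the preimages of the extreme positions under $f$ (namely $M_f$ and $m_f$ themselves) are points of $P$ that every orbit visits, and then verify that no two turns are charged to the same element of $\mathcal N(P)$ consistently with the gap-two rule. Pinning down this bookkeeping—rather than any individual computation—is where the genuine work lies, and it is exactly the point at which the sheer hypothesis supplies the extra rigidity that collapses the inequality to an equality.
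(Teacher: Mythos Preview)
Your plan is essentially the paper's own route---reduce to an integer count over one period and inject the over-rotation features into the unfolding index set---and your reading of the over-rotation numerator as the number of temporal local maxima of $\{f^j(0)\}$ is, for a convergent pattern, equivalent to the paper's count of crossings of the unique fixed point $a_f$: a local maximum at time $j$ means exactly $f^{j-1}(0)<a_f<f^j(0)$. What you are missing is that the paper's fixed-point formulation immediately dissolves the difficulty you single out as ``the main obstacle.'' Since convergence gives $M_f<a_f<m_f$, every peak of the time series lies above $a_f>M_f$ and every valley below $a_f<m_f$; there are no ``subthreshold'' turns at all. In the paper's language, each right-to-left crossing $x_i=f^{m_i}(0)$ (so $x_i>a_f$, $f(x_i)<a_f$) automatically satisfies $x_i>M_f$ and $f(x_i)<m_f$, so $m_i+2$ is a qualifying index for $\mathcal N(P)$ with no further bookkeeping, and consecutive $m_i$ differ by at least $2$ because a return across $a_f$ takes at least two steps. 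This replaces your ``charge to the nearest threshold crossing'' scheme by a one-line observation.

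For the sheer case your sketch (``monotonicity on $[M_f,m_f]$ prevents crossings except in tandem with a turn'') is the right intuition but is not yet a proof. The paper executes it as a short concrete case analysis: assume an unfolding point $\eta$ not produced by any crossing $x_i$, list the four possible relative positions of $f^{-2}(\eta),f^{-1}(\eta)$ with respect to $M_f,a_f,m_f$, and show each either forces $f$ to be non-monotone on $[M_f,m_f]$ or forces $\eta\in\{f(x_i),f^2(x_i)\}$ for some $i$. Your alternative ``domination of lifts'' idea (compare a single-fold lift realizing $orn$ to $F_f$) is interesting and not in the paper, but as written it is not developed enough to stand on its own; in particular you would need to specify the single-fold map and verify that its rotation number really equals $orn(\pi)$ before invoking monotonicity of $\rho$ in the lift.
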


\begin{proof}
 Let $P$ be any \emph{cycle} which \emph{exhibits} $\pi$ and let $f \in \mathcal{U}$ be the $P$-\emph{linear} map with  \emph{unique fixed point} $a_f$.   Let $m_f$ and $M_f$ be points of \emph{absolute minimum} and \emph{absolute maximum} of $f$. We assume $M_f < a_f < m_f$ and thus \emph{points} of $P$ to the \emph{left} of $a_f$ map to the \emph{right} of \emph{itself} while \emph{points} of $P$ to the \emph{right} of $a_f$ map to the \emph{left} of \emph{itself}. Let the \emph{over-rotation number} of $P$ be $\frac{p}{q}$, indicating that there are \emph{exactly} $p$ points to the \emph{right of the fixed point} $a_f$ which are mapped to the \emph{left} of $a_f$. Let these $p$ points be denoted as: $ x_1 > x_2 > x_3 > \dots > x_p$. It's easy to see that for each $ i \in \{1,2, \dots p\}$,  precisely one of  $f^2(x_i)$ and $f(x_i)$  is an \emph{unfolding point}:  let us denote this \emph{unfolding point} by $\xi_i$,  $\xi_i \in \{ f^2(x_i), f(x_i) \} $ for $ i \in \{1,2, \dots p\}$. This indicates that the number of \emph{unfolding points} of  $P$ is at-least $p$.   Theorem \ref{unfolding:computation},  now yields that \emph{unfolding number} of $P$ is at-least $\frac{p}{q}$,  establishing the first result.

Let us now assume that $\pi$ is \emph{sheer}. With this assumption, let us exhibit that number of  \emph{unfolding points} of $P$ is precisely $p$. By way of contradiction, let's entertain the possibility of  an \emph{unfolding point} $\eta$ of $P$ such that: $\eta \notin \{\xi_i : i =1, 2, \dots p \}$. We can then encounter \emph{four} potential \emph{cases}:

\begin{enumerate}
	\item $f(m_f) \leqslant M_f \leqslant f^{-2}(\eta) \leqslant a_f \leqslant f^{-1}(\eta) \leqslant m_f \leqslant f(M_f)$.
	
	\item $f(m_f) \leqslant M_f \leqslant f^{-2}(\eta) \leqslant  f^{-1}(\eta) \leqslant a_f \leqslant  m_f \leqslant f(M_f)$.
	
	\item $f(m_f) \leqslant M_f \leqslant f^{-1}(\eta) \leqslant a_f \leqslant f^{-2}(\eta)  \leqslant f(M_f)$.
	
	\item $f(m_f) \leqslant M_f  \leqslant a_f \leqslant f^{-1}(\eta)  \leqslant f^{-2}(\eta) \leqslant f(M_f)$ and $ f^{-1}(\eta) \leqslant m_f$
\end{enumerate}

We illustrate,  that all these \emph{four cases} leads us to a contradiction. Let's first examine the \emph{first} \emph{case}.  We claim $a_f \leqslant \eta \leqslant f^{-1}(\eta) $. Indeed,  $\eta \leqslant a_f$ means that $f^{-1}(\eta) \in \{x_i : i =1, 2, \dots p \}$,  and hence $\eta \in  \{f(x_i) : i =1, 2, \dots p \}$ contradicting the fact that $\eta \notin \{\xi_i : i =1, 2, \dots p \}$. Consequently, $a_f \leqslant \eta \leqslant f^{-1}(\eta) $. But,  this in turn means, $f$ first \emph{decreases} in the interval $[M_f, a_f]$, then \emph{increases}  in the interval $[a_f, f^{-1}(\eta)]$ and then \emph{decreases} in $[f^{-1}(\eta), m_f]$ which contradicts the fact that $\pi$ is a \emph{sheer pattern}.  

 In the \emph{second case}, since,  $\eta \geqslant f^{-1}(\eta)$, $f$ first \emph{decreases} in the \emph{interval} $[M_f, f^{-2}(\eta)]$, then \emph{increases} in the interval $[ f^{-2}(\eta),  f^{-1}(\eta)]$, and finally \emph{decreases} in $[f^{-1}(\eta), m_f]$, again contradicting the assumption that $\pi$ is a \emph{sheer pattern}.

 In the \emph{third case}, $f^{-2}(\eta) \in \{x_i : i =1, 2, \dots p\}$ and hence, $\eta \in \{f^2(x_i) : i =1, 2, \dots p \}$ again, contradicting the fact that $\eta \notin \{\xi_i : i =1, 2, \dots p \}$.  Lastly, the arguments for the \emph{fourth case} exactly mirror that of the \emph{first}. Thus, \emph{number} of \emph{unfolding points} of $P$ is precisely $p$ and hence from Theorem \ref{unfolding:computation}, \emph{unfolding number} of $P$ is $\frac{p}{q}$. 
 \end{proof}

Notice,  that every \emph{unimodal} or \emph{bimodal pattern} $\pi$ inherently qualifies as a \emph{sheer pattern}, leading us to the subsequent corollary:
\begin{cor}\label{overrotation:unfolding:equality:bimodal}
	If $\pi$ has modality less than or equal to $2$, then over-rotation number of $\pi$= unfolding number of $\pi$. 
\end{cor}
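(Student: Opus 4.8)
The plan is to deduce Corollary \ref{overrotation:unfolding:equality:bimodal} directly from Theorem \ref{overrotation:unfolding:equality}, so the entire task reduces to verifying that every pattern $\pi$ of modality at most $2$ is automatically \emph{sheer}. Recall that a \emph{sheer pattern} was defined to be a pattern that is \emph{convergent} and that is \emph{decreasing} on the interval $[M_\pi, m_\pi]$ (here we are in the standing case $M_\pi < m_\pi$). Thus I would establish two things for the $P$-linear map $f$ of a pattern $\pi$ with modality $\le 2$: first, that $f$ is convergent, and second, that $f$ is monotone (decreasing) on the interval between its point of absolute maximum $M_\pi$ and its point of absolute minimum $m_\pi$. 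Once both are shown, the corollary is immediate from Theorem \ref{overrotation:unfolding:equality}, which guarantees $orn(\pi) = un(\pi)$ for sheer patterns.

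First I would record what modality $\le 2$ means for the $P$-linear map $f$: it has at most two \emph{turning points} (local extrema in the interior), so its graph consists of at most three monotone laps. Since we are in the case $M_\pi < m_\pi$ with $f$ attaining its absolute maximum at $M_\pi$ and absolute minimum at $m_\pi$, the point $M_\pi$ is necessarily a local maximum and $m_\pi$ a local minimum, and these are already two distinct turning points. With modality at most $2$, these are the \emph{only} turning points. Consequently $f$ is strictly increasing on $[le(P), M_\pi]$, strictly decreasing on $[M_\pi, m_\pi]$, and strictly increasing on $[m_\pi, ri(P)]$. The key consequence I need is the middle lap: $f$ is decreasing on $[M_\pi, m_\pi]$, which is exactly the monotonicity condition in the definition of a sheer pattern. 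The bimodal ($=2$) case gives precisely this three-lap profile; the unimodal case (modality $1$) degenerates one of the outer laps but still leaves $f$ decreasing on $[M_\pi, m_\pi]$, so both cases are covered uniformly.

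Next I would verify convergence. A pattern is \emph{convergent} precisely when it is not \emph{divergent}, equivalently (as noted in the excerpt) when the $P$-linear map $f$ lies in the family $\mathcal{U}$ of interval maps with a unique fixed point. With the three-lap structure just established and with $f$ mapping $[0,1]$ into $[0,1]$ having $f(M_\pi)=1$ and $f(m_\pi)=0$, I would argue that the graph of $f$ crosses the diagonal exactly once: on $[le(P), M_\pi]$ and on $[m_\pi, ri(P)]$ the map is increasing and the relevant endpoint values force at most one crossing on the decreasing middle lap, where $f$ runs monotonically from $1$ down to $0$ and so meets $y=x$ in a single point. Hence $f$ has a unique fixed point $a_f$ with $M_\pi < a_f < m_\pi$, so $f \in \mathcal{U}$ and $\pi$ is convergent. (Alternatively one can check the divergence configuration of Definition \ref{divergent} cannot occur with only two turning points, since the three inequalities $f(x)<x$, $f(y)\ge z$, $f(z)\le x$ for $x<y<z$ would force an extra oscillation.)

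Combining the two observations, every pattern of modality at most $2$ is convergent and decreasing on $[M_\pi, m_\pi]$, hence sheer, and Theorem \ref{overrotation:unfolding:equality} then yields $orn(\pi) = un(\pi)$, proving the corollary. I expect the main obstacle to be the convergence step rather than the monotonicity step: the monotonicity is a nearly immediate lap-counting observation, whereas ruling out a second fixed point (or the divergent configuration) requires care to confirm that the absolute extrema at $M_\pi$ and $m_\pi$ genuinely exhaust the turning-point budget and that the endpoint/extreme values of $f$ preclude any additional diagonal crossing. This is routine but is the one place where the specific values $f(M_\pi)=1$, $f(m_\pi)=0$ and the standing assumption $M_\pi < m_\pi$ must be invoked explicitly.
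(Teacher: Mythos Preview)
Your proposal is correct and follows precisely the paper's approach: the paper's entire justification for the corollary is the single observation that every unimodal or bimodal pattern is automatically sheer, after which Theorem~\ref{overrotation:unfolding:equality} applies. Your lap-counting argument for monotonicity on $[M_\pi,m_\pi]$ and your unique-fixed-point check for convergence simply spell out the details the paper leaves implicit.
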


\begin{figure}[H]
	\caption{A \emph{trimodal pattern} having \emph{over-rotation number}: $\frac{3}{11}$ but \emph{unfolding number}: $\frac{5}{11}$.}
	\centering
	\includegraphics[width=0.6 \textwidth]{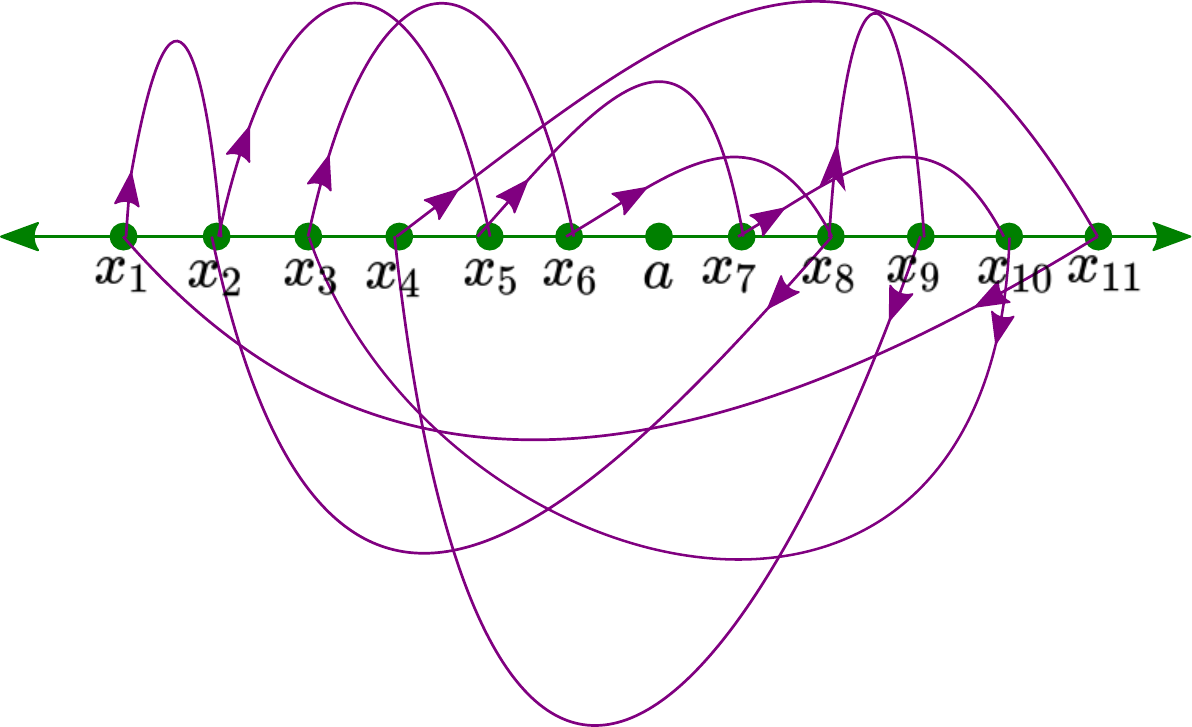}
	\label{trimodal}
\end{figure}

\end{document}